 \numberwithin{equation}{section}
\newtheorem{thm}{thm}[section]
\newtheorem{theorem}[thm]{Theorem}
\newtheorem{lemma}[thm]{Lemma}
\newtheorem{remark}[thm]{Remark}
\newtheorem{proposition}[thm]{Proposition}
\newtheorem{definition}[thm]{Definition}
\newtheorem{corollary}[thm]{Corollary}
\newtheorem{assumption}[thm]{Assumption}
\newcommand{\norm}[1]{\left\Vert#1\right\Vert} 
\newcommand{\abs}[1]{\left|\,#1\,\right|}
\newcommand{\QQ}{\mathbb{Q}}
\newcommand{\NN}{\mathbb{N}}
\newcommand{\EE}{\mathbb{E}}
\newcommand{\BB}{\mathbb{B}}
\newcommand{\PP}{\mathbb{P}}
\newcommand{\DD}{\mathbb{D}}
\newcommand{\E}{\mathcal{E}}
\newcommand{\RR}{\mathbb{R}}
\newcommand{\A}{\mathcal{A}}
\newcommand{\K}{\mathcal{K}}
\newcommand{\W}{\mathcal{W}}
\newcommand{\C}{\mathcal{C}}
\newcommand{\calD}{\mathcal{D}}
\newcommand{\Vs}{V_0, \ldots, V_d}
\newtheorem*{remark*}{Remark}
\newtheorem*{rep@theorem}{\rep@title}
\newcommand{\newreptheorem}[2]{%
\newenvironment{rep#1}[1]{%
 \def\rep@title{#2 \ref{##1}}%
 \begin{rep@theorem}}%
 {\end{rep@theorem}}}
\title{Cubature on Wiener Space for McKean-Vlasov SDEs with Smooth Scalar Interaction}
\author{Dan Crisan, Eamon McMurray}
\begin{document}
\maketitle

\abstract{We present two cubature on Wiener space algorithms for the numerical solution of McKean-Vlasov SDEs with smooth scalar interaction. First, we consider a method introduced in \cite{nicecubature} under a uniformly elliptic assumption and extend the analysis to a uniform strong H\"ormander assumption. Then, we introduce a new method based on Lagrange polynomial interpolation.}

\section{Introduction}
In this paper, we analyse the error in two different algorithms using Cubature on Wiener space to weakly approximate the solution of a McKean-Vlasov SDE with smooth scalar interaction.
By scalar interaction, we mean that the dependence on the measure is through the integral against a scalar function, so the McKean-Vlasov SDE takes the form
\begin{equation}
\label{eq:MKVscalar}
X_t^x = x + \int_0^t V_0(X_s^x, \EE \varphi_0(X_s^x)) \, ds + \sum_{i=1}^d \int_0^t V_i(X_s^x, \EE \varphi_i(X_s^x)) \circ dB^i_s,
\end{equation}
 where $\varphi_i \in \C_b^{\infty}(\RR^N;\RR)$, $V_i \in \C_b^{\infty}(\RR^{N+1};\RR^N)$ and $B=\left(B^1, \ldots, B^d\right)$ is a Brownian motion. We wish to approximate $\EE \left[f(X_T^x)\right]$ for  $f$ Lipschitz continuous and $T>0$ a fixed time.

One common way of approaching this problem is to consider a discretisation of the equation, such as the Euler-Maruyama scheme, along with a Monte Carlo approximation. At each time step, an approximation of the law of $X_t^x$ is then given by the empirical distribution of the entire Monte Carlo population. However, estimating the error due to approximating the expectation inside the coefficients by the Monte Carlo estimator is exactly the problem one wishes to solve in the first place. This leads to a more difficult analysis than for classical SDEs. Nonetheless, this analysis has been carried out under a number of different assumptions when the coefficients have the form 
$
V_i(X_t^x, \textstyle \int h_i(X_t^x-y) \,  P_{X_t^x}(dy)) ,
$
(slightly different to \eqref{eq:MKVscalar}). This type of scheme was studied in papers by Bossy, alone \cite{bossy2005some} and along with Talay \cite{bossy97astochastic}; Kohatsu-Higa \& Ogawa \cite{kohatsuogawa}, and Antonelli \& Kohatsu-Higa \cite{antonellikohatsu}. In all of these papers the total error is composed of a discretisation error of order $\sqrt{h}$ or $h$, where $h$ is size of the largest time step, and statistical error of order $N_{MC}^{-1/2}$ where $N_{MC}$ is the number of Monte Carlo samples. A Milstein discretisation is also analysed in Ogawa \cite{ogawasome}. In \cite{tachet} Tachet des Combes proposes a deterministic numerical scheme based on discretising the PDE satisfied by the density function of the solution to \eqref{eq:MKVscalar}. More recently, in \cite{MKVMLMLC}, a Multi-Level Monte Carlo scheme has been analysed for equations of the type \eqref{eq:MKVscalar}.

Cubature on Wiener space is a high-order alternative to Monte Carlo methods. It is part of a class of methods called Kusuoka-Lyons-Victoir methods that have been shown to be highly effective in practice, see e.g.
\cite{gyurkolyons}, \cite{ninomyiavictoir}. Applications include the non-linear filtering problem \cite{crisanghazali,littlyons,crisan2013kusuoka,lee2015adaptive}, backward stochastic
differential equations \cite{crisanmansolving,crisanmansecond} and  calculating Greeks \cite{teichmann2006calculating} in finance. Convergence of the cubature approximation for some path dependent functionals has also been shown in \cite{frizbayer}. The starting point of the Cubature on Wiener space method is to view $X^{x}_T$ as a functional of the Brownian path $(B_{s})_{s \in [0,T]}$, say
\[
X^{x}_T = \alpha_{x,T}((B_{s})_{s \in [0,T]}),\:\:\:\: \alpha_{x,T} :\C_0([0,T];\RR^d) \to \RR^N,
\]
and to view the expectation $\EE[f(X_T^{x})]$ as an integral over the Wiener space
\begin{equation}
\label{eq:integral}
\EE[f(X_T^{x})] = \int_{\Omega} \left( f \circ \alpha_{x,T} \right) (\omega) \, \PP(d \omega),
\end{equation}
where $\Omega=\C_0([0,T];\RR^d)$ and $\PP$ is the Wiener measure.
The key idea of the cubature on Wiener space method is that one can approximate such integrals by replacing the Wiener measure, $\PP$, by a discrete measure supported on finitely many bounded variation paths, called a cubature measure. If the cubature measure is chosen so that iterated Stratonovich integrals of Brownian motion up to some order have the same expectation under the cubature and Wiener measures and the time interval $[0,T]$ is small, then by considering the Stratonovich-Taylor expansion of the solution of the SDE, one can show that the target expectations under the cubature and Wiener measures agree up to some high order error. When the time interval is not small, $[0,T]$ can be partitioned into sub-intervals and the approximation performed over each sub-interval.
In the original paper \cite{lyons2004cubature}, dealing with ordinary SDEs, evaluating the functional $\alpha_{x,T}$ at a bounded variation path $\omega$ amounts to solving an rdinary differential equation (ODE) and evaluating integrals such as \eqref{eq:integral} under a cubature measure amounts to computing weighted sums of solutions of ODEs. The complication for McKean-Vlasov equations like \eqref{eq:MKVscalar} is that the functional $\alpha_{x,T}$ depends on the paths $\left\{ (\EE \varphi_i(X^x_s))_{s \in [0,T]}:i=0, \ldots, d \right \}$ which are unknown. Instead, one must include an approximation of the functional $\alpha_{x,T}$ in the design of the algorithm.

To our knowledge, the first algorithm involving Cubature on Wiener Space in relation to McKean-Vlasov SDE was introduced by Chaudru de Raynal \& Garcia-Trillos \cite{nicecubature}.
Their idea is to partition $[0,T]$ into $\{0=t_0 < t_1 < \ldots < t_n=T \}$ and over the interval $[t_j, t_{j+1}]$ to replace $ \EE\varphi_i(X^x_t)$ appearing in the coefficients
with the cubature approximation of the Taylor exapnsion of the path $t \mapsto \EE\varphi_i(X^x_t)$ around $t_j$ up to some order, $q$.
The global error can, as in the original case, be decomposed as a sum of local errors, and these local errors naturally split into an error due to the approximation of $\EE\varphi_i(X^x_t)$ in the coefficients, and an error due to replacing the Wiener measure by a cubature measure.
The authors consider the case of smooth and  bounded uniformly elliptic coefficients and prove that the error is of order $n^{-[(q+1) \wedge (l-1)/2]}$ where $n$ is the number of time steps and $l$ is the degree of the cubature formula. This is the first algorithm we consider. We show how to extend the error analysis to the case when the coefficients satisfy a uniform strong H\"ormander condition. One of the reasons the authors of \cite{nicecubature} choose to impose a uniformly elliptic condition on the coefficients of equation \eqref{eq:MKVscalar} is the lack of available sharp derivative estimates for  time-inhomogeneous parabolic PDEs (which are necessary for the error analysis) under any more general conditions. For this reason, a secondary goal of this work is to develop derivative estimates for  time-inhomogeneous parabolic PDEs under more general conditions and to analyse the error for the cubature on Wiener space algorithm in this case.

In the second algorithm, which we call the Lagrange interpolation method, over the interval $[t_j, t_{j+1}]$, one simply replaces $\EE\varphi_i(X^x_t)$ with the Lagrange polynomial which interpolates the cubature approximation of $\EE \varphi_i(X_{t}^x)$ at the previous $r$ points in the time partition.

\subsection{Cubature on Wiener Space}

First, we detail exactly what we mean by a cubature formula on Wiener space. We need to introduce notation for iterated integrals with respect to components of the $(d+1)$-dimensional process $(B^0, B^1, \ldots, B^d)$ consisting of time and the $d$-dimensional Brownian motion. We use the following notation for multi-indices on $\{0, \ldots, d\}$:
\[
\A := \{\emptyset \} \cup \cup_{k \geq 1 } \{0, 1, \ldots, d \}^k \quad \text{ and } \quad \A_{1} := \A \setminus \{ \emptyset, (0) \} .
\] 
We endow $\A$ with the concatenation operation
\[
\alpha \ast \beta := (\alpha_1, \ldots,\alpha_k, \beta_1, \ldots,
\beta_l), \ \ \text{where } \alpha = (\alpha_1,\ldots,\alpha_k), \ \ \ \beta =
(\beta_1,\ldots,\beta_l) \in \mathcal{A}.
\]
and we define $\alpha'= (\alpha_1)$ and $-\alpha:=(\alpha_2, \ldots, \alpha_k)$,  so that $\alpha = \alpha' \ast -\alpha$.
We define the following $n$-tuples lengths:
\begin{equation*}
\abs{\alpha} :=
\begin{cases}
k, & \text{if   } \alpha = (\alpha_1, \ldots, \alpha_k), \\
0, & \text{if   } \alpha = \emptyset.
\end{cases} \: \: \:\:\:
\norm{\alpha} := \abs{\alpha} + \text{card }\{i: \alpha_i=0, i =
1, \ldots, d\},
\end{equation*}
and define the set $\mathcal{A}(l) := \{ \alpha \in \mathcal{A} : \| \alpha \| \leq l \} $ and define $\A_1(l)$ similarly.
For $\alpha \in \A$, we denote by $I^{\alpha}_{t,s}(Y)$ the iterated Stratonovich integral of the process $Y$ over the interval $[t,s]$:
\begin{equation*}
I^{\alpha}_{t,s}(Y):=\int_{t}^{s} \int_{t}^{s_{n}} \cdots
\int_{t}^{s_{2}} Y_{s_1} \circ dB ^{\alpha_{1}}_{s_{1}} \cdots \circ d B
_{s_{n-1}}^{\alpha_{n-1}} \circ d B _{s_n}^{\alpha_{n}}.
\end{equation*}%
Similarly, for a bounded variation path $\omega= (\omega^1, \ldots, \omega^d) \in \C_{bv}([t,s];\RR^d)$ we set $\omega^0(s)=s$ and denote the iterated integral of a process $Y$ by $I^{\alpha}_{t,s}[\omega](Y)$:
\begin{equation*}
I^{\alpha}_{t,s}[\omega](Y):=\int_{t}^{s} \int_{t}^{s_{n}} \cdots
\int_{t}^{s_{2}} Y_{s_1}  d\omega ^{\alpha_{1}}_{s_{1}} \cdots  d \omega
_{s_{n-1}}^{\alpha_{n-1}} \,d \omega _{s_{n}}^{\alpha_{n}}.
\end{equation*}%
With this notation in hand, we can define a cubature formula.

\begin{definition}[Cubature formula \cite{lyons2004cubature}]
	A set of $N_{Cub}$ bounded variation paths, \newline
	$
	\omega _{1},\ldots ,\omega _{N_{Cub}} \in \mathcal{C}_{bv}([0,1];\mathbb{R}^{d})$, for
	some $N_{Cub}\in \mathbb{N}$, together with some weights \newline $\lambda _{1},\ldots
	,\lambda _{N_{Cub}}\in \mathbb{R}^{+}$\ such that $\textstyle \sum_{j=1}^{N_{Cub}} \lambda_{j}=1$  define a cubature
	formula on Wiener Space of degree $l$ if, for any $\alpha \in \A(l)$, 
	\begin{equation*}
	\EE[I^{\alpha }_{0,1}(1)] = \sum_{j=1}^{N_{Cub}}
	\lambda_{j} \, I^{\alpha }_{0,1}[\omega _{j}](1).
	\end{equation*}%
\end{definition}
\noindent We note that for a given $l \in \mathbb{N}$, Lyons \& Victoir \cite{lyons2004cubature} proved that there exists a cubature formula on Wiener Space of degree $l$, with concrete examples given, for certain pairs $(l,d)$, in \cite{lyons2004cubature} and \cite{gyurkolyons}. From the scaling properties of the Brownian motion we can deduce, for $0 \leq t <s$, 
\begin{equation*}
\EE[I^{\alpha }_{t,s}(1)] = \sum_{j=1}^l
\lambda_{j} \, I^{\alpha }_{t,s}[\omega_j(t,s)](1),
\end{equation*}%
where $\omega_j(t,s)$ is the  re-scaled path defined by $\textstyle \omega_{j}(t,s) \left( u \right) =\sqrt{s-t}\, \omega
_{j}\left( \frac{u}{s-t}\right)$, $u\in \left[ t,s\right] $. In other words,
the expectation of the iterated Stratonovich integrals $I^{\alpha }_{t,s}(1)$ with $\alpha \in \A(l)$ is
the same under the Wiener measure as it is under the cubature measure, 
\begin{equation*}
\mathbb{Q}_{t,s}:=\sum_{j=1}^{N_{Cub}} \lambda_{j} \, \delta_{\omega_j(t,s)}.
\end{equation*}

Once we have a cubature measure $\QQ$ and a partition $\Pi_n$, we can extend this to a measure $\QQ^{\Pi_n}$ on $[0,T]$, supported on $(N_{cub})^n$ paths along a tree. We use the notation $\mathcal{M}_k$ to denote multi-indices over $\{1, \ldots, N_{Cub}\}$ of length exactly $k$. We use this set to index the nodes in the cubature tree after $k$ time-steps or, equivalently, the unique path leading to that node. To create the tree, one first creates the paths by concatenating the re-scaled paths: for $p=(p_1, \ldots, p_n) \in \mathcal{M}_n$, define the path
\[
\omega_p(t) = \omega_p(t_{i-1}) + \omega_{p_i}(t_{i-1}, t_i)(t) \quad \text{ when } t \in [t_{i-1}, t_i).
\]
Then, one can attach a new weight to each path by
\[
\Lambda_{p} := \prod_{p_i \in p} \lambda_{p_i}.
\]
Finally, we can define a measure on all paths along the tree by
\[
\QQ^{\Pi_n} := \sum_{p \in \mathcal{M}_n} \Lambda_{p} \, \delta_{ \omega_{p} }.
\]

\subsection{Outline \& Main Results}

In this section, let us make more precise our contribution. We introduce the following processes
\begin{align}
	\label{eq:MKVscalarlate}
	X_t^{s,x} = x + \sum_{i=0}^d \int_s^t V_i(X_u^{s,x}, \EE  \varphi_i (X_u^{s,x})  ) \circ dB^i_u,
\end{align}
and
\begin{align}
	\label{eq:MKVscalardecoupled}
	X_t^{s,x,y} = x + \sum_{i=0}^d  \int_s^t V_i(X_u^{s,x,y}, \EE  \varphi_i (X_u^{0,y})  ) \circ dB^i_u.
\end{align}
The first is just the McKean-Vlasov SDE started from $x$ at time $s$. The second process is also started from $x$ at time $s$ but with the path $u \mapsto \EE [ \varphi_i (X_u^{0,y})] $ appearing in the coefficients instead of the McKean-Vlasov term. This process is therefore not a true McKean-Vlasov process but an SDE with coefficients depending on time and a parameter, $y$. We introduce the operators
\begin{align*}
	& P_{s,t} f(x) := \EE \left[ f(X_t^{s,x}) \right]\quad \text{ and } \quad  P_{s,t}^y f(x):= \EE \left[ f(X_t^{s,x,y}) \right].
\end{align*}
We note that
$
X_T^{0,x} = \left. X_T^{0,x,y} \right|_{y=x},
$
so the quantity we wish to compute is
\[
P_{0,T} f(x) = \left. P^y_{0,T}f(x) \right|_{y=x}.
\]
Now, let us denote by $E_t^{y}(\varphi_i)$ a generic approximation of $ \EE [ \varphi_i (X_t^{0,y}) ]$. Later, we will introduce specific approximations $E_t^{T,y}(\varphi_i)$ and $E_t^{L,y}(\varphi_i)$, corresponding to the Taylor and Lagrange interpolation methods respectively. We then introduce the approximating process
\begin{align*}
	^EX_t^{s,x,y} = x + \sum_{i=0}^d  \int_s^t V_i(^E X_u^{s,x,y}, E_u^{y}(\varphi_i)) \circ dB^i_u,
\end{align*}
and the operators
\begin{align*}
	P_{s,t}^{E,y} g(x): =  \EE\left[ g (^EX_t^{s,x,y})\right] \quad \text{ and } \quad 	 Q_{s,t}^{E,y} g(x): =  \EE_{\QQ_{s,t}}\left[ g (^EX_t^{s,x,y})\right].
\end{align*}
In a similar way, we will denote
the local approximation operator by $Q_{s,t}^{E,x}$ and, once a partition $\Pi_n$ of $[0
,T]$ is fixed, we define
\[
Q^{E,x,\Pi_n}_{0,t}: = Q_{0,t_1}^{E,x} \cdots Q^{E,x}_{t_j,t} \quad \text{ for } t \in [t_j,t_{j+1}).
\]
Then, $Q^{E,x,\Pi_n}_{0,T}f(x)$ will be the final approximation of $P_{0,T}f(x)$, with the global error
\[
\E(T,x,l,\Pi_n)  := \left( P_{0,T} - Q^{E,x,\Pi_n}_{0,T} \right) f(x) .
\]	
We note that 	
\[
\sup_{x \in \RR^N} \left| \E(T,x,l,\Pi_n) \right| \leq \sup_{x,y \in \RR^N} \left| \left( P^y_{0,T} - Q^{E,y,\Pi_n}_{0,T} \right) f(x) \right|.
\]
Now, for fixed $y$, $\{P^y_{s,t}: 0 \leq s \leq t  \leq T\}$ forms a two-parameter semigroup of operators. This allows us to decompose the global error the scheme as follows
\begin{align*}
	\left[ P^y_{0,T} - Q^{E,y}_{0,t_1} Q^{E,y}_{t_1,t_2} \cdots Q^{E,y}_{t_{n-1}, t_n} \right] f(x) &= \sum_{j=0}^{N-1} Q^{E,y}_{0,t_j} \left[ P^y_{t_j,t_{j+1}}- Q_{{t_j,t_{j+1}}}^{E,y} \right]P^y_{t_{j+1},T} f(x) .
\end{align*}
Then, since $ \left\| Q^{E,y}_{0,t_j} \phi \right\|_{\infty} \leq \|\phi\|_{\infty}$, we are left to estimate the \emph{local error} 
\[
\left[ P^y_{t_j,t_{j+1}}- Q_{{t_j,t_{j+1}}}^{E,y} \right] u^y(t_{j+1},x)
\]
uniformly in $x$ and $y$, 
where $u^y(t,x) := P^y_{t,T} f(x)$ solves a parabolic PDE with coefficients depending on the parameter $y \in \RR^N$. The resulting error analysis relies on regularity estimates for the solution of this PDE.

Now, let us specify what the approximation $E^x_t(\varphi_i)$ is for each scheme. First, the Taylor method:
we wish to perform a Taylor expansion of the path $t \mapsto  \EE \varphi_i(X^{0,x}_t)$, but since the coefficients in the SDE satisfied by $X^{0,x}$ are of the form $V_i \left(X^{0,x}_t, \EE \varphi_i(X^{0,x}_t) \right)$, we instead consider the Taylor expansion of this more general form.
For a pair of functions 
$g \in \C^{\infty}_b(\RR^N \times \RR;\RR)$ and $\varphi \in \C^{\infty}_b(\RR^N;\RR)$, It\^{o}'s formula yields
\begin{align}
	\label{eq:Taylorexp}
	\begin{split}
		\EE	&\left[g \left(X^{0,x}_t, \EE \varphi(X^{0,x}_t) \right)  \right] = g \left(x, \varphi(x) \right)  \\
		& \quad + \int_0^t  \EE \left[ \left(\mathcal{L}^x_u g \right)\left(X^{0,x}_u, \EE \varphi(X^{0,x}_u) \right) \right]  + \EE \left[ (\partial_y g) \left(X^{0,x}_u, \EE \varphi(X^{0,x}_u) \right) \right] \EE \left[ \left(\mathcal{L}^x_u \varphi\right)(X^{0,x}_u)  \right] \, du,
	\end{split}
\end{align}
where $\partial_y$ is the derivative in the second argument of $g$ and $\mathcal{L}^x_s$ is the differential operator
\begin{align*}
	& \mathcal{L}_s^x: = V_0(\cdot, \EE \varphi_0(X_s^{0,x}))  + \frac{1}{2} \sum_{i=1}^d V_i(\cdot, \EE \varphi_i(X_s^{0,x}))^2 .
\end{align*}
Note that each term under the integral in the right hand side of \eqref{eq:Taylorexp} is again a product of terms of the form $\EE	\left[g \left(X^{0,x}_u, \EE \varphi(X^{0,x}_u) \right)  \right]$ for different functions $g$ and $\varphi$.
Let us denote by $\mathcal{S}$ the set of all paths $(S_t)_{t \in [0,T]}$ of the form
\[
S_t= \EE \left[ g \left(X^{0,x}_t, \EE \varphi(X^{0,x}_t) \right)  \right]
\]
for some
$g \in \C^{\infty}_b(\RR^N \times \RR;\RR)$, and $\varphi \in \C^{\infty}_b(\RR^N;\RR)$
and denote by $\bar{\mathcal{S}}$ the set of all paths $(\bar{S}_t)_{t \in [0,T]}$ of the form
\[
\bar{S}_t=	\sum_{i=1}^{n_1} \prod_{j=1}^{n_2} S^{i,j}_t
\]
where $n_1$ and $n_2$ are positive integers and $S^{i,j} \in \mathcal{S}$ for $i=1, \ldots, n_1$ and $j=1, \ldots, n_2$.
For $t \in [0,T]$, we introduce the notation $\mathcal{T} : \mathcal{S} \to \bar{\mathcal{S}}$ for the map
\begin{align*}
	\EE & \, g  \left(X^{0,x}, \EE \varphi(X^{0,x}) \right)  \\
	&  \mapsto \EE \left[ \left(\mathcal{L}^x g \right)\left(X^{0,x}, \EE \varphi(X^{0,x}) \right)\right]  + \EE \left[ (\partial_y g) \left(X^{0,x}, \EE \varphi(X^{0,x}) \right) \right] \EE \left[ \left(\mathcal{L}^x \varphi\right)(X^{0,x})  \right],
\end{align*}
so that the expansion of $G_t:=\EE	\left[g \left(X^{0,x}_t, \EE \varphi(X^{0,x}_t) \right)  \right]$ in \eqref{eq:Taylorexp} can alternatively be written as
\begin{equation}
\label{eq:G}
G_t =G_0 + \int_0^t \mathcal{T}_s (G) \, ds
\end{equation}
For the Taylor expansion, we would like to apply $\mathcal{T}$ to the term $\mathcal{T} (G)$. To do so, we extend $\mathcal{T}$ to an operator from $\bar{\mathcal{S}}$ to itself by linearity
\[
\mathcal{T} \left(	\sum_{i=1}^{n_1}  S^{i}\right) = \sum_{i=1}^{n_1} \mathcal{T} \left( S^{i} \right),
\]
and a product rule
\[
\mathcal{T} \left(	 \prod_{j=1}^{n_2} S^{j}\right) =  \sum_{k=1}^{n_2} \prod_{j=1,j \neq k}^{n_2} S^{j} \: \mathcal{T} \left( S^{k} \right).
\]
Since now we have $\mathcal{T}$ defined as an operator on $ \bar{\mathcal{S}}$, we can iterate the expansion in \eqref{eq:G} to get the Taylor expansion of order $q \geq 1$
\begin{equation}
\label{eq:TaylorG}
G_t = \sum_{k=0}^q \frac{t^k}{k!} \,(	\mathcal{T}_{0})^k (G)  + \frac{1}{(q+1)!} \int_0^t  \,(	\mathcal{T}_{s})^{q+1} (G) \, ds .
\end{equation}
Now, for all $s \in [0,T]$, $k \geq 1$ and $G \in \bar{\mathcal{S}}$ we define $ \left(\mathcal{T}_s^{Q}\right)^k(G)$ to be the same expression as $ \left(\mathcal{T}_s\right)^k(G)$ with all expectations under $\PP$ replaced by expectations under $\QQ^{\Pi_n}$.
Then, the approximation of $\EE \varphi_i(X^{0,y}_s)$  for the Taylor method, which we henceforth denote by $E^{T,y}_{s}(\varphi_i)$, is
\begin{align*}
	& E^{T,y}_{s}(\varphi_i) := \sum_{k=0}^q \frac{1}{k!} \,(	\mathcal{T}^{Q}_{t_j})^k (\EE \varphi_i(X^{0,y})) \, (s-t_j)^k \quad \text{ for } s \in [t_j,t_{j+1}).
\end{align*}
We make some explicit computations of this type in Example ... in Section \ref{sec:NumEx}.

To define the approximation for the Lagrange interpolation method, we denote by $L \left[ \left\{(t_i, x_i)\right\}_{i=1}^k \right]$ the Lagrange interpolating polynomial of degree at most $k-1$ with $L \left[ \left\{(t_i, x_i)\right\}_{i=1}^k \right](t_j) = x_j$ for all $j=1, \ldots, k$. We define the approximation of order $r$ by
\[
E^y_t(\varphi_i): =  L \left[ \left\{ \left(t_{j-k} ,\EE^{\QQ^{\Pi_n}} \varphi_i \left (X^{0,y}_{t_{j-k}} \right)  \right) \right\}_{k=0}^{j \wedge (r-1)} \right]  (t) \quad \quad t \in [t_j,t_{j+1}]
\]
In other words, $E^{y}_{t}(\varphi_i)$ for $t \in [t_j,t_{j+1}]$ is the unique polynomial of degree minimal degree which passes through the points $\left(t_{j+1-((j+1)\wedge r)}, \EE^{\QQ^{\Pi_n}} \varphi(X^{0,y}_{t_{j+1-((j+1)\wedge r)}}) \right), \ldots, \left(t_{j}, \EE^{\QQ^{\Pi_n}} \varphi(X^{0,y}_{t_{j}})\right) $. That is, if the time index $j$ is greater than $r-1$, we interpolate through the last $r$ cubature approximations of $\EE \varphi(X^{0,y}_{t})$ along the partition. If $j < r-1$, we interpolate through all of the available previous points. We now detail both algorithms.


\begin{algorithm}[H]
	\caption{Taylor method}\label{algo:Taylor}
	\begin{algorithmic}[1]
		\State Set $(X^{\emptyset},\Lambda_0) =(x, 1)$
		\For {$0 \leq j \leq n-1$} 
		\State Let 
		$
		\mathcal{T}^{Q}
		$
		be as in the main text.
		\For {$p \in \mathcal{M}_j$}
		\For{$0 \leq i \leq d$}
		\State Set $\displaystyle E^T_i(t)=\sum_{k=0}^{q} \frac{1}{k!}(t-t_j)^k (	\mathcal{T}^{Q}_{t_j})^k (\EE \varphi_i(X^{0,x}))$
		\EndFor
		\For {$1 \leq l \leq N_{cub}$}
		\State Define $X_{t_{j+1}}^{p * l}$ as the solution of the ODE:
		\begin{align*}
			& dX^{p * l}_t =  \sum_{i=0}^{d} V_i \left(X_t^{p * l}, E^T_i(t)\right) \, d\omega_{l}^{i}(t_j,t_{j+1})( t ),\\
			& X^{p *l}_{t_j}=X^{p}_{t_j}.
		\end{align*}
		\State Set the associated weight: $\Lambda_{p * l} = \Lambda_{p} \lambda_l$
		\EndFor
		\For{$0 \leq i \leq d$}
			\State Store $\sum_{p \in \mathcal{M}_{j+1} } \Lambda_p \varphi_i(X^p_{t_{j+1}})$.
		\EndFor
		\EndFor
		\EndFor
		\State Final approximation of $\EE\left[f(X_T^x)\right]$ is
		\[
		\sum_{p \in \mathcal{M}_n} \Lambda_{p} f(X^{p}_{t_n})
		\]
	\end{algorithmic}
\end{algorithm} 

\begin{algorithm}[H]
	\caption{Lagrange interpolation method}\label{algo:lagrange}
	\begin{algorithmic}[1]
		\State Set $(X^{\emptyset},\Lambda_0) =(x, 1)$
		\For {$0 \leq j \leq n-1$} 
		\For {$p \in \mathcal{M}_j$}
		\For{$0 \leq i \leq d$}
		\State Set $
		E^L_i(t): =  L \left[ \left\{ \left(t_{j-k} ,\sum_{p \in \mathcal{M}_{j-k} } \Lambda_p \varphi_i(X^p_{t_{j-k}}) \right) \right\}_{k=0}^{ j \wedge (r-1) } \right]  (t) $
		\EndFor
		\For {$1 \leq l \leq N_{cub}$}
		\State Define $X_{t_{j+1}}^{p * l}$ as the solution of the ODE:
		\begin{align*}
			& dX^{p * l}_t =  \sum_{i=0}^{d} V_i \left(X_t^{p * l}, E^L_i(t)\right) \, d\omega_{l}^{i}(t_j,t_{j+1})( t ),\\
			& X^{p *l}_{t_j}=X^{p}_{t_j}.
		\end{align*}
		\State Set the associated weight: $\Lambda_{p * l} = \Lambda_{p} \lambda_l$
		\EndFor
		\For{$0 \leq i \leq d$}
		\For{$0 \leq k \leq ((j+1) \wedge r)$}
		\State Store $\sum_{p \in \mathcal{M}_{j+1-k} } \Lambda_p \varphi_i(X^p_{t_{j+1-k}})$.
		\EndFor
		\EndFor
		\EndFor
		\EndFor
		\State Final approximation of $\EE\left[f(X_T^x)\right]$ is
		\[
		\sum_{p \in \mathcal{M}_n} \Lambda_{p} f(X^{p}_{t_n})
		\]
	\end{algorithmic}
\end{algorithm}

\begin{remark}
	\label{rem:algocomp}
	The Taylor method requires finding an expression for $(\mathcal{T}_{t})^k (\EE \varphi_i(X^{0,y}))$ for $k=1, \ldots, q$ and $i=0, \ldots, d$ either by hand or using some symbolic computation. The Lagrange interpolation method does not require this; the interpolating polynomial is defined at each time step as part of the algorithm.
\end{remark}
\noindent Now, we state the main assumptions. First, we introduce the notation $V_{[\alpha]}$ for iterated Lie brackets of the vector fields.  In this setting each $\Vs : \RR^N \times \RR\to \RR^N$ and we think of these as vector fields $V_i(\cdot,x')$ on $\RR^N$ parametrised by the second variable, $x' \in \RR$, with the Lie Bracket between any two given by
\[
	[V_i,V_j](x,x') = \partial_x V_j(x,x') V_i(x,x') -\partial_x V_i(x,x') V_j(x,x')
\]
where $\partial V_i(x,x'):= (\partial_{x_l} V_i^k(x,x'))_{1 \leq k,l \leq N}$ is the Jacobian matrix of $V_i$ and similarly for $\partial_x V_j$. Then, for $\alpha \in \bigcup_{k \geq1} \{1, \ldots, N\}^k$ and $i \in \{1, \ldots, N\}$, we define inductively
\begin{align*}
V_{[i]}:=V_{i}, \quad \quad V_{[\alpha*i]}:= [V_i,V_{[\alpha]}] \quad .
\end{align*}
Now we are able to state our assumptions.
\begin{assumption}
	\label{ass:MKVcub}
	\begin{itemize}
		\item[(A1):] Uniform strong H\"ormander condition: there exist $\delta>0$ and $m \in \mathbb{N}$ such that for all $\xi \in \RR^N$,
		\begin{align*}
			\inf_{(x,x') \in \RR^N \times \RR} \sum_{\alpha \in \cup_{k =1}^m \{1, \ldots, N\}^k} \langle V_{[\alpha]}(x,x'), \xi \rangle^2 \geq \delta \, \left|\xi\right|^2
		\end{align*}
		\item[(A2):] Smoothness of coefficients:
		\begin{align*}
			&\varphi_i \in \C^{\infty}_b(\RR^N;\RR), \quad V_i \in \C^{\infty}_b(\RR^N \times \RR;\RR^N ) \quad i=0, \ldots, d 
		\end{align*}
		\item[(A3):] We assume the paths in any cubature formula we use are absolutely continuous.
	\end{itemize}
\end{assumption}
As is common with cubature on Wiener space methods, when the terminal function $f$ is not smooth, we will use an uneven partition of the time interval $[0,T]$.
Here, we introduce the Kusuoka partition and a modified version. We denote by $\Pi^{\gamma}_n$ the Kusuoka \cite{Kusuoka2} partition of the interval $[0,T]$ with $(n+1)$ points and parameter $\gamma \geq 1$, defined by 
\begin{align*}
& t_j = T \left(1- \left(1-\frac{j}{n} \right)^{\gamma} \right) \quad \text{ for }  j=0, \ldots, n-1, \\
& t_n=T.
\end{align*}

We denote by $\Pi^{\gamma,r}_n$ the modified Kusuoka partition, with $r$ smaller steps at the start whose size is determined by the overall order of the method we require. It is defined as follows: for a fixed integer $r$ and real parameter $\gamma$, we fix the first $(r+1)$ points as $t_0=0$,  $t_{k+1}-t_k=T n^{-r/(k+1)}$ for $k=0, \ldots, r-1$. Thereafter and we split the rest of the interval $[t_r,T]$ using the Kusuoka partition, i.e.
\begin{align*}
	& t_j =  \left(T- t_r \right) \textstyle \left(1-\left(1-\frac{j-r}{n-r}\right)^{\gamma} \right) + t_r \quad \quad j\in \{r+1,\ldots,  n-1\} \\
	& t_n=T
\end{align*}

\noindent Then we have the following result, which is the main result of this work.

\begin{theorem}
	\label{th:MKVCub}
	Let $f \in \C^{\infty}_b(\RR^N;\RR)$. Then, assuming (A2), the error for the Taylor method satisfies the following
	\begin{equation*}
		\label{eq:errsmoothTaylor}
		\sup_{x \in \RR^N} \left| \E(T,x,l,\Pi_n) \right| \leq C \, \sum_{j=0}^{n-1} (t_{j+1}-t_j)^{A(q,l)},
	\end{equation*}
	where $A(q,l):= (q+2) \wedge (l+1)/2$. Under the same assumptions, the error in the Lagrange interpolation method is
	\begin{equation*}
		\label{eq:errsmoothcoupled}
		\sup_{x \in \RR^N} \left|\E(T,x,l,\Pi_n) \right| \leq C \, \sum_{j=0}^{n-1} \bigg\{  \frac{(t_{j+1}-t_j)}{((j+1) \wedge r)!}\prod_{k=0}^{j \wedge (r-1)} (t_{j+1}-t_{j-k}) 
		+ (t_{j+1}-t_j)^{(l+1)/2} \bigg\}.
	\end{equation*}
	Now, suppose $f$ is only Lipschitz continuous. Assuming (A1)-(A3) and that we use the Kusuoka partition $\Pi^{\gamma}_n$ with $\gamma >l-1$,  we can bound the error in the Taylor method according to the size of $m$
	\begin{align}
		\label{eq:errLipTaylorm1}
		m=1: \quad \quad &\sup_{x \in \RR^N} \left|\E(T,x,l,\Pi^{\gamma}_n) \right| \leq C \, n^{-B(q,l)-1/2}, \\
		\label{eq:errLipTaylorm2}
		m\geq 2: \quad \quad &\sup_{x \in \RR^N} \left|\E(T,x,l,\Pi^{\gamma}_n) \right| \leq C \, n^{-B(q,l)},
	\end{align}
	where $B(q,l)=(q+ \textstyle \frac{1}{2}) \wedge \frac{l-2}{2}$.
	Assuming (A1)-(A3) and that we use the modified Kusuoka partition $\Pi^{\gamma,r}_n$ with $ \gamma \in (l-1,l)$, we can bound the error in the Lagrange interpolation method according to the size of $m$
	\begin{align}
		\label{eq:errLipLagm1}
		m=1: \quad \quad & \sup_{x \in \RR^N} \left|\E(T,x,l,\Pi^{\gamma,r}_n) \right| \leq C \, n^{-D(r,l)-1/2} \,  (1-r/n)^{-l/2} ,\\
		\label{eq:errLipLagm2}
		m \geq 2 : \quad \quad &\sup_{x \in \RR^N} \left|\E(T,x,l,\Pi^{\gamma,r}_n) \right| \leq C \, n^{-D(r,l)} \,  (1-r/n)^{-l/2},
	\end{align}
	where $D(r,l)=(r- \textstyle \frac{3}{2}) \wedge \textstyle \frac{l-2}{2}$. 
\end{theorem}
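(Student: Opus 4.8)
The plan is to build on the error decomposition already set up above. After invoking the bound
\[
\sup_{x \in \RR^N} \left| \E(T,x,l,\Pi_n) \right| \leq \sup_{x,y \in \RR^N} \left| \left( P^y_{0,T} - Q^{E,y,\Pi_n}_{0,T} \right) f(x) \right|
\]
and the semigroup splitting together with $\|Q^{E,y}_{0,t_j}\phi\|_\infty \le \|\phi\|_\infty$, it suffices to bound, uniformly in $x,y$, the local error $[P^y_{t_j,t_{j+1}} - Q^{E,y}_{t_j,t_{j+1}}]u^y(t_{j+1},\cdot)$ with $u^y(t,\cdot) = P^y_{t,T}f$. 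I would decompose this further as
\[
\big[P^y_{t_j,t_{j+1}} - P^{E,y}_{t_j,t_{j+1}}\big]u^y(t_{j+1},\cdot) \;+\; \big[P^{E,y}_{t_j,t_{j+1}} - Q^{E,y}_{t_j,t_{j+1}}\big]u^y(t_{j+1},\cdot),
\]
the first being the \emph{coefficient-freezing error} from replacing $\EE\varphi_i(X^{0,y}_\cdot)$ in the coefficients by $E^y_\cdot(\varphi_i)$ (which decouples the McKean--Vlasov term), the second the \emph{pure cubature error} for the resulting time-inhomogeneous, $y$-parametrised SDE.

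For the cubature error, since $^EX^{t_j,\cdot,y}$ solves an ordinary SDE once the paths $u\mapsto E^y_u(\varphi_i)$ are frozen, I would run the classical Kusuoka--Lyons--Victoir argument: expanding $g\mapsto P^{E,y}_{t_j,t_{j+1}}g$ and $g\mapsto Q^{E,y}_{t_j,t_{j+1}}g$ in a Stratonovich--Taylor series, all terms with $\|\alpha\|\le l$ cancel by the degree-$l$ cubature property, leaving a remainder bounded by $C(t_{j+1}-t_j)^{(l+1)/2}$ times a finite sum of sup-norms of iterated vector-field derivatives $V_{[\alpha_1]}\cdots V_{[\alpha_k]}u^y(t_{j+1},\cdot)$ of order $\le l+1$. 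For the coefficient-freezing error, a variation-of-constants/Gronwall comparison of the flows of $X^{t_j,\cdot,y}$ and $^EX^{t_j,\cdot,y}$ bounds it by $C(t_{j+1}-t_j)\sup_i\sup_{u\in[t_j,t_{j+1}]}|\EE\varphi_i(X^{0,y}_u) - E^y_u(\varphi_i)|$ times a first-derivative norm of $u^y$. It then remains to bound the frozen-path error for each scheme. For the \textbf{Taylor method}, $E^{T,y}_s(\varphi_i)$ is the order-$q$ Taylor polynomial of $s\mapsto\EE\varphi_i(X^{0,y}_s)$ around $t_j$, but with the coefficients $(\mathcal{T}_{t_j})^k$ evaluated under $\QQ^{\Pi_n}$; the Taylor remainder \eqref{eq:TaylorG} contributes $O((s-t_j)^{q+1})$, while the discrepancy between $(\mathcal{T}_{t_j})^k$ and $(\mathcal{T}^Q_{t_j})^k$ is itself a sum of cubature errors on $[0,t_j]$ that I would absorb by induction on the time index $j$; integrating over $[t_j,t_{j+1}]$ gives the factor $(t_{j+1}-t_j)^{q+2}$, which with the $(t_{j+1}-t_j)^{(l+1)/2}$ cubature term yields the exponent $A(q,l)$. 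For the \textbf{Lagrange interpolation method}, $E^{L,y}_t(\varphi_i)$ interpolates $s\mapsto\EE^{\QQ^{\Pi_n}}\varphi_i(X^{0,y}_s)$ at the $(j+1)\wedge r$ nodes $t_{j-k}$; the interpolation-error formula applied to the smooth exact map gives a remainder $\le \frac{1}{((j+1)\wedge r)!}\prod_{k=0}^{j\wedge(r-1)}(t-t_{j-k})$ times a bounded derivative, and the propagated cubature errors at the earlier nodes are handled by the same induction; integrating over $[t_j,t_{j+1}]$ produces exactly the product term in the statement.

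Summing the local bounds over the partition gives the result. When $f\in\C^\infty_b$, $u^y(t,\cdot)$ and all its vector-field derivatives are bounded uniformly in $t<T$ and $y$ by standard stability of smooth parabolic flows, so the sum of local errors is $C\sum_j(t_{j+1}-t_j)^{A(q,l)}$ (resp. the displayed Lagrange sum), giving the first part of the theorem. For merely Lipschitz $f$, the crux — and where I expect the main difficulty — is the \textbf{sharp derivative estimate} for the time-inhomogeneous, parameter-dependent parabolic equation $\partial_t u^y + \mathcal{L}^y_t u^y = 0$, $u^y(T,\cdot)=f$, of the form $\|V_{[\alpha_1]}\cdots V_{[\alpha_k]}u^y(t,\cdot)\|_\infty \le C(T-t)^{-\eta}$ with $\eta$ depending on the $\|\alpha_i\|$ and on $m$: for $m=1$ one gains a full half-power of $(T-t)$ per derivative, whereas under the strict strong H\"ormander condition $m\ge 2$ the bound degrades by an extra $(T-t)^{-1/2}$. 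This is the new analytic input, which I would obtain by Kusuoka--Stroock-type Malliavin integration by parts on the decoupled flow $X^{s,x,y}$ (genuinely time-inhomogeneous but no longer McKean--Vlasov), uniformly in $y$, plus a perturbation argument identifying $u^y$ with the PDE solution. Feeding these into the per-step estimates, the cubature local error becomes $\le C(t_{j+1}-t_j)^{(l+1)/2}(T-t_{j+1})^{-(l-1)/2}$ for $m=1$ (one half-power worse for $m\ge 2$), and the Kusuoka partition $\Pi^\gamma_n$ with $\gamma>l-1$ is tuned precisely so that this sum converges at rate $n^{-(l-2)/2}$ (one half-power better when $m=1$), while the coefficient-freezing contributions give $n^{-(q+1/2)}$; the minimum of the two is $B(q,l)$. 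For the Lagrange method the early interpolation steps carry larger constants, which is exactly why $\Pi^{\gamma,r}_n$ front-loads $r$ tiny steps of size $Tn^{-r/(k+1)}$; the residual $(1-r/n)^{-l/2}$ arises because the terminal Kusuoka block has effective length $n-r$ rather than $n$. The delicate part of this last step is the bookkeeping: tracking exponents so that the blow-up near $T$ is exactly absorbed by the shrinking steps, and keeping the $m=1$ and $m\ge 2$ cases correctly separated.
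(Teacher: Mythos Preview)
Your overall plan matches the paper closely: semigroup telescoping, splitting each local error into a coefficient-freezing part and a pure cubature part, controlling the polynomial-approximation error via the Taylor remainder or the Lagrange interpolation formula, and closing with a discrete Gronwall over the partition. For $f\in\C^\infty_b$ this is exactly the argument in the paper.

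There is, however, a genuine gap in your treatment of the Lipschitz case when $m\ge 2$. Your coefficient-freezing bound---``$C(t_{j+1}-t_j)\sup|\EE\varphi_i(X^{0,y}_u) - E^y_u(\varphi_i)|$ times a first-derivative norm of $u^y$''---cannot be obtained as written. A flow comparison of $X^{t_j,x,y}$ and ${}^E X^{t_j,x,y}$ under $\PP$ (BDG plus Gronwall) only yields $\|X^{t_j,x,y}_{t_{j+1}} - {}^E X^{t_j,x,y}_{t_{j+1}}\|_2 \le C\sqrt{t_{j+1}-t_j}\sup|\ldots|$, so pairing with $\|\nabla u^y\|_\infty$ gives a factor $\sqrt{t_{j+1}-t_j}$, not $(t_{j+1}-t_j)$. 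The alternative route, writing $[P^y-P^{E,y}]u^y=\EE\int_{t_j}^{t_{j+1}}(\mathcal{L}^y_s-\mathcal{L}^{E,y}_s)u^y\,ds$, does produce the factor $(t_{j+1}-t_j)$, but the generator difference involves \emph{second} derivatives of $u^y$, so you need $\|u^y(s,\cdot)\|_{2,\infty}$, not a first-derivative norm. This is precisely where $m$ enters: under (A1) with $m\ge 2$, Corollary~\ref{cor:hormanderIBP} only gives $\|\nabla^2 u^y(t,\cdot)\|_\infty\le C(T-t)^{-m/2}$, which is not summable against the Kusuoka partition once $m$ is large. The paper's remedy (Remark~\ref{rem:expl}) is to change the splitting for $m\ge 2$: write $[P^y-Q^{E,y}]=[Q^y-Q^{E,y}]+[P^y-Q^y]$ and bound the first bracket by a flow comparison \emph{under the cubature measure} $\QQ$, where the driving paths are absolutely continuous (this is why (A3) is assumed) and the Gronwall argument gives $C\sqrt{t_{j+1}-t_j}\,\|\nabla u^y\|_\infty$; crucially $\|\nabla u^y\|_\infty$ is uniformly bounded by \eqref{eq:Lipu}. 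It is this forced switch of splitting---not a degradation of the derivative estimate---that produces the lost half-power in \eqref{eq:errLipTaylorm2} and \eqref{eq:errLipLagm2}. Relatedly, your statement that the cubature local error is ``one half-power worse for $m\ge 2$'' is incorrect: the one-step cubature bound \eqref{eq:liphCubError} depends only on directional derivatives $\widetilde{W}_\beta u^y$ along the vector fields (Proposition~\ref{prop:derivs}), whose blow-up rate $(T-t)^{(1-\|\beta\|)/2}$ is independent of $m$.
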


\begin{remark}
	\begin{enumerate}
		\item Let us comment on the term $(1-r/n)^{-l/2}$ appearing in the error for the Lagrange interpolation method. This term comes from the need to take small steps at the start (for an accurate polynomial approximation) and end of the partition (due to blow up of the derivatives of $u^y(t,\cdot)$). We need to take $r$ small steps at the start, leaving $n-r$ steps split in the style of the Kusuoka partition. This term reflects the need to balance the size of $r$ and $n$. Since it goes to zero as $n \to \infty$, it can be bounded by a constant for sufficiently large $n$. 
		For example, in the case $m=1$ for a second order method, one must use a cubature formula of degree $l=5$ and choose $r=4$. Then, for $n\geq 10$ $(1-r/n)^{-l/2}=(1-4/n)^{-5/2}<3.6$.
		\item The uniformly elliptic case covered by \cite{nicecubature} is the case $m=1$. Choosing the parameter $r$ appropriately, we also recover the same rate for the Lagrange interpolation method up to the multiplication of the term $(1-r/n)^{-l/2}$.
		\item In the case where $m \geq 2$, we lose $1/2$ an order of convergence. This is due to the difference in the way we split the error, which we explain in Remark \ref{rem:expl}
	\end{enumerate}
\end{remark}

\section{Preliminary results}

First, we have a lemma on existence, uniqueness and moment bounds for the solution of equation \eqref{eq:MKVscalardecoupled}.

\begin{lemma}
	\label{lem:MKVbds}
	Under assumption (A2), there exist unique strong solutions to equations \eqref{eq:MKVscalarlate} and \eqref{eq:MKVscalardecoupled}. Moreover, for all $s \in [t,T]$, the mapping $x \mapsto X_s^{t,x,y}$ is $\PP$-a.s. smooth, and for all multi-indices $\eta$ on $\{1, \ldots, N\}$,
	\begin{equation}
	\label{eq:decMKVderivbd}
	\sup_{x,y \in \RR^N} \left\| \partial^{\eta}_x X^{t,x,y}_s \right\|_p < \infty \quad \forall p \geq 1.
	\end{equation}
\end{lemma}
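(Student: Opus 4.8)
The plan is to proceed in three stages: first well-posedness, then smooth dependence on the initial datum, then uniform $L^p$ bounds on the spatial derivatives. Throughout, the only structural inputs are that $V_i\in\C^{\infty}_b(\RR^N\times\RR;\RR^N)$ and $\varphi_i\in\C^{\infty}_b(\RR^N;\RR)$, i.e.\ (A2).

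\textbf{Well-posedness.} Since $V_i\in\C^{\infty}_b$, the Stratonovich--It\^o correction $\tfrac12\sum_{i=1}^d(\partial_x V_i)\,V_i$ is again bounded with bounded derivatives, so \eqref{eq:MKVscalarlate} and \eqref{eq:MKVscalardecoupled} can be rewritten in It\^o form with coefficients that are globally Lipschitz in $x$. In \eqref{eq:MKVscalardecoupled} the time dependence enters only through the deterministic maps $u\mapsto\EE\varphi_i(X_u^{0,y})$, so once these are known the equation is a standard (time-inhomogeneous) SDE and classical theory gives a unique strong solution. It therefore suffices to establish well-posedness of the McKean--Vlasov equation \eqref{eq:MKVscalarlate}. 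I would do this by a Picard iteration on $\C([s,T];\RR^{d+1})$: given continuous deterministic paths $m=(m_0,\dots,m_d)$, solve the (measure-free) SDE $dX_t=\sum_{i=0}^d V_i(X_t,m_i(t))\circ dB^i_t$ started from $x$ at time $s$, and set $\Phi(m)_i(t):=\EE\varphi_i(X_t)$. Because $\varphi_i$ is Lipschitz and $V_i$ is Lipschitz in both arguments, standard $L^2$ stability estimates for SDEs show $\Phi$ is a contraction on a short enough interval (or in an exponentially weighted sup-norm on $[s,T]$); the fixed point is the unique solution, and patching over a partition of $[0,T]$ gives well-posedness on all of $[0,T]$. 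Applying this with $s=0$, $x=y$ produces the flow $X^{0,y}$ entering \eqref{eq:MKVscalardecoupled}.

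\textbf{Smoothness of the flow.} With the deterministic paths $u\mapsto\EE\varphi_i(X_u^{0,y})$ now fixed, the coefficients of \eqref{eq:MKVscalardecoupled}, namely $x\mapsto V_i(x,\EE\varphi_i(X_u^{0,y}))$ together with the It\^o correction, lie in $\C^{\infty}_b(\RR^N;\RR^N)$ with all bounds uniform in $u\in[t,T]$ and in $y\in\RR^N$ — the latter because $\EE\varphi_i(X_u^{0,y})\in[-\|\varphi_i\|_\infty,\|\varphi_i\|_\infty]$, a fixed compact set, while (A2) bounds $V_i$ and its $x$-derivatives over all of $\RR^N\times\RR$. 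Kunita's theorem on smooth dependence of SDE flows on their initial condition then gives that $x\mapsto X_s^{t,x,y}$ is $\PP$-a.s.\ $\C^{\infty}$ and that, for each multi-index $\eta$, $\partial_x^\eta X^{t,x,y}_s$ solves the linear SDE obtained by formally differentiating \eqref{eq:MKVscalardecoupled} in $x$ exactly $\eta$ times.

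\textbf{Uniform derivative bounds.} For a first-order derivative $J_s:=\partial_{x_k}X_s^{t,x,y}$ one obtains $dJ_s=\sum_{i=0}^d \partial_x V_i(X_s^{t,x,y},\EE\varphi_i(X_s^{0,y}))\,J_s\circ dB^i_s$ with $J_t=e_k$; since $\max_i\|\partial_x V_i\|_\infty<\infty$, Burkholder--Davis--Gundy together with Gr\"onwall give $\sup_{x,y}\|J_s\|_p<\infty$ for every $p$, with a constant depending only on $p,T,d$ and the (A2) sup-bounds, hence independent of both $x$ and $y$. For $|\eta|\geq2$ I would argue by induction on $|\eta|$: differentiating once more, $\partial_x^\eta X_s^{t,x,y}$ satisfies a linear SDE driven by the same operators plus an inhomogeneous forcing term which, by the Fa\`a di Bruno formula, is a finite sum of products of strictly lower-order derivatives $\partial_x^{\eta'}X^{t,x,y}$ ($|\eta'|<|\eta|$) with coefficients equal to higher $x$-derivatives of the $V_i$ evaluated at $(X_s^{t,x,y},\EE\varphi_i(X_s^{0,y}))$, all bounded by (A2). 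The induction hypothesis bounds this forcing term in every $L^p$ uniformly in $x,y$, and a final Burkholder--Davis--Gundy plus Gr\"onwall step closes the induction and yields \eqref{eq:decMKVderivbd}.

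The argument is entirely classical; the only real work is bookkeeping. I expect the main (mild) obstacle to be organising the induction for $|\eta|\geq2$ — keeping track of the structure of the linear SDE satisfied by $\partial_x^\eta X^{t,x,y}$ and checking at each stage that the constants produced by Burkholder--Davis--Gundy and Gr\"onwall depend only on the (A2) sup-bounds on the $V_i$, so that uniformity in $x$ and, crucially, in $y$ is preserved throughout.
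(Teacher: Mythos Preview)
Your proposal is correct and follows essentially the same route as the paper: view $X^{t,x,y}$ as the solution of a standard time-inhomogeneous SDE with coefficients $W^y_i(s,z):=V_i(z,\EE\varphi_i(X_s^{0,y}))$, observe these are $\C^\infty_b$ in $z$ with all bounds uniform in $(s,y)$, and then invoke Kunita's flow-regularity theorem. The paper simply cites references (Jourdain for well-posedness, Kunita's Theorem 4.6.5 for the derivative bounds) where you spell out the fixed-point argument and the inductive BDG--Gr\"onwall estimate; both are valid, and your explicit argument has the minor advantage of not needing to check time-smoothness of the coefficients, which the paper does verify via It\^o's formula.
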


\begin{proof}
	Under assumption (A2), existence and uniqueness of strong solutions to equations \eqref{eq:MKVscalarlate} and \eqref{eq:MKVscalardecoupled} is easy to prove and can be found in, for example, \cite{Jourdain_nonlinearsdes}. Now, we note that we can view $\left(X^{t,x,y}_s\right)_{s \in [t,T]}$ as the solution of an SDE with coefficients
	\[
	W^y_i(s,z):= V_i(z, \EE \varphi_i (X^{0,y}_s)),
	\]
	depending on time and a parameter. Due to Assumption \ref{ass:MKVcub} (A2) $(s,z) \mapsto W^y_i(s,z)$ is smooth, with bounded derivatives of all orders, with all bounds uniform in $y$. The differentiability in $z$ is assumed, and the differentiability in $s$ comes from the smoothness of each $\varphi_i$, which allows us to apply It\^o's formula to $\varphi_i (X^{0,y}_s)$, giving
	\[
	\partial_s  \left[\EE \varphi_i (X^{0,y}_s) \right] = \EE \left[ (\mathcal{L}^y_s \varphi_i) (X^{0,y}_s) \right].
	\]
	Then, Kunita \cite[Theorem 4.6.5]{Kun84} guarantees that the moment bound \eqref{eq:decMKVderivbd} holds.
\end{proof}

In the next lemma, we collect some results on the regularity of $u^y(t,x)$ and the pure cubature part of the error. We use the notation,  for $\psi \in \C^2_b(\RR^N;\RR)$,
\[
\|\psi\|_{2, \infty}:= \sup_{x \in \RR^N} \left\{  |\psi(x) | + |\nabla \psi(x)| + |\nabla^2 \psi(x)| \right\}.
\]

\begin{lemma}
	\label{lem:MKCcubreg}
	Let $j \in \{0, \ldots, n-2\}$ and $t \in [0,T)$.
	\begin{enumerate}
		\item If $f \in \C^{\infty}_b(\RR^N;\RR)$, then for both schemes corresponding to $E=E^T$ and $E=E^L$
		\begin{align}
			\label{eq:smoothCubError}
			\sup_{x,y \in \RR^N} \left| \left[P^{E,y}_{t_j,t_{j+1}} - Q^{E,y}_{t_j,t_{j+1}} \right] u^y(t_{j+1},x) \right| & \leq C \, (t_{j+1}-t_j)^{(l+1)/2}.
		\end{align}
		Moreover, the first two derivatives of $u^y$ are bounded:
		\begin{equation}
		\label{eq:smoothDiffGenerators}
		\sup_{(t,y) \in [0,T] \times \RR^N } \left\| u^y(t,\cdot)\right\|_{2, \infty} \leq C .
		\end{equation}
		\item If $f$ is Lipschitz, then
		\begin{align}
			\label{eq:liphCubError}
			\sup_{x,y \in \RR^N} \left| \left[P^{y}_{t_j,t_{j+1}} - Q^{y}_{t_j,t_{j+1}} \right] u^y(t_{j+1},x) \right| & \leq C \, \sum_{k=l}^{l+1} (T-t_{j+1})^{-k/2} \,(t_{j+1}-t_j)^{(k+1)/2}.
		\end{align}
		In addition, the first derivative of $u^y$ is bounded
		\begin{equation}
		\label{eq:Lipu}
		\sup_{(t,x,y) \in [0,T] \times \RR^N \times \RR^N} \left|\nabla u^y(t,x) \right| \leq C 
		\end{equation}
		and we have the estimate on the first two derivatives:
		\begin{equation}
		\label{eq:lipDiffGenerators}
		\sup_{y \in  \RR^N } \left\| u^y(t,\cdot)\right\|_{2, \infty} \leq C \, (T-t)^{-m/2} .
		\end{equation}
		Finally, for both schemes corresponding to $E=E^L$ and $E=E^T$,
		\begin{equation}
		\label{eq:liplaststep}
		\sup_{x,y \in \RR^N} \left|	\left[ P^y_{s,t}- Q_{s,t}^{E,y} \right] f(x) \right| \leq C \, \|f \|_{Lip} |t-s|^{1/2}.
		\end{equation}
	\end{enumerate}
\end{lemma}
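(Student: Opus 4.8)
The plan is to treat the two parts separately, each built from (i) a PDE-regularity estimate for $u^y(t,\cdot)$ and (ii) a local cubature-error estimate obtained by comparing Stratonovich–Taylor expansions under $\PP$ and under $\QQ_{t_j,t_{j+1}}$.

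For part 1, I would first establish \eqref{eq:smoothDiffGenerators}. Since $f \in \C^\infty_b$, the function $u^y(t,x) = P^y_{t,T}f(x) = \EE[f(X_T^{t,x,y})]$ satisfies a linear parabolic PDE with coefficients $W^y_i(s,z) = V_i(z, \EE\varphi_i(X^{0,y}_s))$, which by the argument in the proof of Lemma \ref{lem:MKVbds} are smooth in $(s,z)$ with all derivatives bounded uniformly in $y$. Differentiating $u^y(t,x) = \EE[f(X_T^{t,x,y})]$ in $x$ and using the moment bounds \eqref{eq:decMKVderivbd} on $\partial^\eta_x X^{t,x,y}_T$ from Lemma \ref{lem:MKVbds}, together with boundedness of $f$ and its derivatives, gives $\sup_{t,y}\|u^y(t,\cdot)\|_{2,\infty} \le C$ directly — no H\"ormander condition is needed here because $f$ itself is smooth. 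For \eqref{eq:smoothCubError}, the approximating process ${}^E X^{t_j,x,y}_t$ solves an ODE/SDE with smooth bounded coefficients $V_i(\cdot, E^{y}_u(\varphi_i))$, and $E^y_u(\varphi_i)$ (whether Taylor or Lagrange) is a bounded smooth function of $u$ with bounds uniform in everything; so the standard cubature argument applies verbatim. Concretely, I would Stratonovich–Taylor expand $g\mapsto g({}^EX^{t_j,\cdot,y}_{t_{j+1}})$ for $g = u^y(t_{j+1},\cdot)$ to order $l$: the terms of order $\le l$ match under $\PP$ and $\QQ_{t_j,t_{j+1}}$ by the defining property of the degree-$l$ cubature formula (after rescaling), and the remainder is controlled using \eqref{eq:smoothDiffGenerators} and the moment bounds of iterated integrals under both measures, yielding the $(t_{j+1}-t_j)^{(l+1)/2}$ bound. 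Here one uses $j \le n-2$ so that $t_{j+1} < T$ and $u^y(t_{j+1},\cdot)$ is genuinely smooth with bounded derivatives.

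For part 2 with $f$ only Lipschitz, the difficulty is that derivatives of $u^y(t,\cdot)$ blow up as $t\to T$. The bound \eqref{eq:Lipu} on $\nabla u^y$ is the easy one: $\nabla u^y(t,x) = \EE[\nabla f(X_T^{t,x,y})\,\partial_x X_T^{t,x,y}]$, and $\|\nabla f\|_\infty \le \|f\|_{Lip}$ together with \eqref{eq:decMKVderivbd} gives a uniform bound. The estimate \eqref{eq:lipDiffGenerators} is the crux: one writes $u^y(t,\cdot) = P^y_{t,t+\theta}u^y(t+\theta,\cdot)$, and uses a Kusuoka–Stroock-type smoothing estimate — valid precisely under the uniform strong H\"ormander condition (A1) with bracket length $m$ — of the form $\|\nabla^2 P^y_{t,t+\theta}\phi\|_\infty \le C\,\theta^{-m/2}\|\phi\|_\infty$ (plus lower-order terms), applied with $\theta = T-t$ and $\phi = u^y(T,\cdot)=f$, bounded via $\|f\|_\infty$ on compacts / the Lipschitz growth. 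This is where one needs the derivative estimates for time-inhomogeneous hypoelliptic parabolic PDEs advertised in the introduction; I would cite or prove the relevant integration-by-parts (Malliavin calculus) bound for the two-parameter family $P^y_{s,t}$, checking uniformity in $y$ from the uniformity of the H\"ormander constant $\delta$ and of the coefficient bounds. Then \eqref{eq:liphCubError} follows by the same Stratonovich–Taylor comparison as in part 1 but now keeping track of the singular factors: expanding $u^y(t_{j+1},\cdot)$, the order-$l$ terms cancel between $\PP$ and $\QQ_{t_j,t_{j+1}}$, and the remainder involves $\nabla^k u^y(t_{j+1},\cdot)$ for $k\in\{l,l+1\}$, each bounded by $C(T-t_{j+1})^{-k/2}$ via iterated application of \eqref{eq:lipDiffGenerators} along a dyadic chain (Kusuoka's trick of splitting $[t_{j+1},T]$), multiplied by the $(t_{j+1}-t_j)^{(k+1)/2}$ from the iterated-integral moments. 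Finally \eqref{eq:liplaststep}: on the last step there is no gain to extract since we only want order $1/2$, so one simply writes $[P^y_{s,t}-Q^{E,y}_{s,t}]f(x) = \EE[f(X_t^{s,x,y})] - \EE_{\QQ_{s,t}}[f({}^EX_t^{s,x,y})]$, bounds $|f(z)-f(z')|\le\|f\|_{Lip}|z-z'|$, and controls $\|X_t^{s,x,y}-x\|_1$, $\|{}^EX_t^{s,x,y}-x\|_1$ and $\EE_{\QQ_{s,t}}\|{}^EX_t^{s,x,y}-x\|$ each by $C|t-s|^{1/2}$ using the coefficient bounds and the rescaling of the cubature paths; an intermediate comparison with ${}^EX$ under $\PP$ absorbs the error from replacing $\EE\varphi_i$ by $E^{E,y}_u(\varphi_i)$, which is itself $O(|t-s|)$.

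The main obstacle is \eqref{eq:lipDiffGenerators}: producing a clean, uniform-in-$y$ second-derivative smoothing estimate of order $(T-t)^{-m/2}$ for the non-autonomous hypoelliptic semigroup $P^y_{s,t}$ under only a uniform strong H\"ormander assumption. This requires a Malliavin-calculus / integration-by-parts argument with the Malliavin covariance matrix of $X_t^{s,x,y}$ bounded below using (A1), handling the time-dependence of the coefficients $W^y_i(s,\cdot)$ and checking that all constants depend on the data only through $\delta$, $m$, and the uniform $\C^\infty_b$-norms of $V_i,\varphi_i$ — everything else in the lemma is a by-now-standard cubature estimate layered on top of it.
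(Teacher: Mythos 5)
Your part (1) argument and the treatment of \eqref{eq:Lipu} and \eqref{eq:liplaststep} are sound and essentially match the paper: differentiate under the expectation using the moment bounds from Lemma \ref{lem:MKVbds} for the smooth case, and for the last step compare both $X_t^{s,x,y}$ and ${}^E X_t^{s,x,y}$ with $x$ using the Lipschitz constant of $f$. You also correctly identify that \eqref{eq:lipDiffGenerators} requires an integration-by-parts estimate for the non-autonomous H\"ormander semigroup uniform in $y$, which is what the paper develops in Appendix \ref{sec:timeinhomintro} (Theorem \ref{th:IBPandGB} and Corollary \ref{cor:hormanderIBP}).

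The gap is in your treatment of the cubature remainder \eqref{eq:liphCubError}. You write that the remainder ``involves $\nabla^k u^y(t_{j+1},\cdot)$ for $k \in \{l,l+1\}$, each bounded by $C(T-t_{j+1})^{-k/2}$ via iterated application of \eqref{eq:lipDiffGenerators}.'' This is wrong on two counts, and the error would break the rate. First, iterating \eqref{eq:lipDiffGenerators} in the H\"ormander setting does not give $\|\nabla^k u^y(t,\cdot)\|_\infty \le C(T-t)^{-k/2}$: each additional ordinary derivative costs $(T-t)^{-m/2}$, so you would get $(T-t)^{-(k-1)m/2}\|f\|_{Lip}$ (that is the content of Corollary \ref{cor:hormanderIBP}), which for $m \geq 2$ is far too singular and would not sum against $(t_{j+1}-t_j)^{(k+1)/2}$ over the Kusuoka partition. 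Second, and more fundamentally, the Stratonovich--Taylor remainder does not produce ordinary coordinate derivatives $\nabla^k u$ at all: it produces iterated vector-field derivatives $\widetilde{W}_\beta u$ with $\beta \in \A$, $\|\beta\| \in \{l+1, l+2\}$ (the indices $\beta$ with $-\beta \in \A(l)$, $\beta \notin \A(l)$). It is precisely these directional derivatives that obey the sharp bound $\sup_x |\widetilde{W}_\beta u^y(s,x)| \le C\,\|f\|_{Lip}\,(T-s)^{(1-\|\beta\|)/2}$, which the paper obtains in Proposition \ref{prop:derivs} by combining the Kusuoka identity \eqref{thebestindentity} (to remove $\widetilde{W}_0$) with the integration-by-parts bound of Theorem \ref{th:IBPandGB}. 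With $k = \|\beta\|-1 \in \{l,l+1\}$ this gives exactly the factor $(T-t_{j+1})^{-k/2}$ in \eqref{eq:liphCubError}, with no dependence on $m$ in the exponent. Your proposed chain of $\nabla^2$-bounds along a dyadic decomposition of $[t_{j+1},T]$ cannot reproduce this because it tracks the wrong derivatives. Relatedly, the smoothing estimate you write for \eqref{eq:lipDiffGenerators}, $\|\nabla^2 P^y_{t,t+\theta}\phi\|_\infty \le C\,\theta^{-m/2}\|\phi\|_\infty$, is also not correct as stated: for merely bounded $\phi$ the rate degrades to $\theta^{-m}$, and the $\theta^{-m/2}$ rate requires the Lipschitz norm $\|\phi\|_{Lip}$ on the right (again, this is what Corollary \ref{cor:hormanderIBP} provides). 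In short, you have the right high-level architecture but are missing the key mechanism of the H\"ormander-setting cubature analysis: one must bound vector-field iterated derivatives $\widetilde{W}_\beta u$, not coordinate derivatives $\nabla^k u$, and the Lipschitz gain in the exponent is essential. This is exactly Proposition \ref{prop:cuberror} in the appendix, which the paper's proof invokes for \eqref{eq:liphCubError}.
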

\begin{proof}
	We think of $\left(X^{t,x,y}_s\right)_{s \in [t,T]}$ and $\left(^{E} X^{t,x,y}_s\right)_{s \in [t,T]}$ as the solutions of the SDEs with coefficients
	\begin{align*}
		W^y_i(s,z)&:= V_i(z, \EE \varphi_i (X^{0,y}_s)) , \\
		^{E} W^y_i(s,z)&:= V_i(z, E^{y}_s(\varphi_i) ) \quad i=0, \ldots, d,
	\end{align*}
	respectively.
	We think of  $\left\{W^y_i(t,\cdot), \: ^{E} W^y_i(t,\cdot): i=0,\ldots, d \right\}$ as vector fields on $\RR^N$ depending on time $t \in [0,T]$ and the parameter $y \in \RR^N$. In the proof of Lemma \ref{lem:MKVbds}, we explained that $W^y_0, \ldots, W^y_d \in \C^{\infty}_b([0,T]\times \RR^N;\RR^N)$ with all bounds uniform in $y$. The same is true of the functions $^{E} W^y_0 \ldots,^{E}W^y_d$. To see this, we note that for the both schemes, the map $s \mapsto E^{y}_s(\varphi_i)$ is a polynomial, therefore smooth with bounded derivatives on $[0,T]$.
	
	We use the notation  $W^y_{[\alpha]}$ and $^E W^y_{[\alpha]}$ for iterated Lie brackets of the vector fields introduced just before Assumption \ref{ass:MKVcub}.
 	Then, we note that for all $(s,z,y) \in [0,T] \times \RR^N \times \RR^N$ and $\alpha \in \cup_{k \geq1} \{1, \ldots, N\}^k$,
	\begin{align*}
		&  \langle W^y_{[\alpha]}(s,z), \xi \rangle^2 \geq  \inf_{x' \in \RR}  \langle V_{[\alpha]}(z,x'), \xi \rangle^2 ,
	\end{align*}
	so that
	\begin{align*}
		& \inf_{(s,z,y) \in [0,T] \times \RR^N \times \RR^N} \langle W^y_{[\alpha]}(s,z), \xi \rangle^2 \geq  \inf_{(z,x') \in \RR^N \times \RR}  \langle V_{[\alpha]}(z,x'), \xi \rangle^2 .
	\end{align*}
	Hence, under the uniform strong H\"ormander condition (A1),
	\begin{align*}
		&\inf_{(s,z,y) \in [0,T] \times \RR^N \times \RR^N} \sum_{ \alpha \in \cup_{k=1}^m \{1, \ldots, N \}^k } \langle W^y_{[\alpha]}(s,z,y), \xi \rangle^2 
		\geq \delta \, |\xi |^2 ,
	\end{align*}
	so the vector fields $\{W^y_i:i=1, \ldots d\}$ satisfy a uniform strong H\"ormander condition. Exactly the same holds true for the vector fields $\{ ^E W^y_i:i=1, \ldots d\}$. This uniform strong H\"ormander condition is stronger than the $\widetilde{UFG}$ condition, hence, we have the results of Section \ref{sec:hormander} available to us, subject to slight modification since the coefficients in the current setting also depend on a parameter.
	
	Now, for $f \in \C^{\infty}_b(\RR^N;\RR)$, by differentiating under the expectation and using the moment bounds on $\partial^{\eta}_x X^{t,x,y}_s$ contained in \eqref{eq:decMKVderivbd} we see that for all multi-indices $\eta$ on $\{1, \ldots, N\}$ with length at least one,
	\[
	\sup_{x,y \in \RR^N} \left| \partial^{\eta}_x u^y(t,x) \right| < \infty,
	\]
	so \eqref{eq:smoothDiffGenerators} holds. 
The one step cubature error conrtained in \eqref{eq:smoothCubError} follows from a stochastic Taylor expansion, noting
that for all $\beta \in \A_1$,
	\[
	\sup_{x,y \in \RR^N}	\left|\widetilde{ W}^y_{\beta} u^y(t,x) \right| < \infty.
	\]
	This again follows from the boundedness of derivatives of $u^y(t, \cdot)$ and $W^y_i(t, \cdot)$ uniformly in $y$.
	
	For $f$ Lipschitz, the bound in \eqref{eq:liphCubError} is the same as  \eqref{eq:onestepcubappendix}, adapted to the case where coefficients also depend on a parameter.
	The estimate \eqref{eq:Lipu} comes from Corollary \ref{cor:hormanderIBP} adapted to the case where coefficients also depend on a parameter.
	
	Finally, when $f$ is Lipschitz,
	\begin{align*}
		\left|	\left[ P^y_{s,t}- Q_{s,t}^{E,y} \right] f(x) \right| &= \left|	\EE \left[ f(X_t^{s,x,y})\right] - \EE_{\QQ} \left[ f(^EX_t^{s,x,y}) \right] \right| \\
		& \leq  \left|	\EE \left[ f(X_t^{s,x,y})\right] - f(x)  \right| + \left| \EE_{\QQ} \left[ f(^EX_t^{s,x,y})\right] - f(x) \right| \\
		& \leq \|f\|_{Lip} \left(	\EE \left|  X_t^{s,x,y} -x\right|  +	\EE_{\QQ} \left|  ^EX_t^{s,x,y} -x\right|  \right)
	\end{align*}
	That $	\EE \left|  X_t^{s,x,y} -x\right| \leq C \, |t-s|^{1/2}$ is a standard result for SDEs with bounded coefficients. For the other term,
	\[
	\EE_{\QQ} \left| ^E X_t^{s,x,y} -x\right| = \sum_{i=1}^{N_{Cub}} \lambda_i \left|  ^E X_t^{s,x,y,i} -x\right|.
	\] 
	where $^E X_t^{s,x,y,i}$ is the solution of the ODE along the $i$-th cubature path. Then, we have
	\[
	\EE_{\QQ} \left| ^E X_t^{s,x,y} -x\right| \leq C |t-s|,
	\]
	due to a standard estimate on the solution of an ODE with bounded coefficients.
\end{proof}

Before we discuss how accurate the polynomial approximations are, we need a lemma on the concerning the time partitions we use and a type of sum involving its increments which will appear in the error analysis. 
\begin{lemma}
	\label{lem:Kpartition}
\begin{enumerate}
	\item 
	Let $a > b \geq 0$, let $\gamma > \textstyle\frac{a-1}{a-b}$ and let $t_j$ be times in points in the Kusuoka partition, then there is a constant $C=C(\gamma)>0$ such that
	We consider the sum
	
	\begin{equation}
\sum_{j=0}^{n-2} (t_{j+1}-t_j)^a (T-t_{j+1})^{-b} \leq
	C \, n^{-(a-1)} .
	\end{equation}
\item
	For the partition $\Pi^{\gamma,r}_n$, 
	
	\begin{equation}
	\frac{1}{((j+1) \wedge r)!}\prod_{k=0}^{j \wedge (r-1)} (t_{j+1}-t_{j-k})  \leq C \, n^{-r}
	\end{equation}
	and for $a>b \geq 0$ and $\gamma > \textstyle\frac{a-1}{a-b}$ 
	\begin{equation}
	\sum_{j=0}^{n-2} (t_{j+1}-t_j)^a (T-t_{j+1})^{-b}  \leq
	C \, n^{-(a-1)} \, \left(1-r/n \right)^{-(a+b-1)} .
	\end{equation}
\end{enumerate}
\end{lemma}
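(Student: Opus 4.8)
The plan is to treat the two partitions separately and, within each, to reduce everything to a single estimate on the increments $t_{j+1}-t_j$ in terms of $n$ and the position of $t_j$. For the Kusuoka partition $\Pi^\gamma_n$, recall $t_j = T(1-(1-j/n)^\gamma)$, so $T-t_j = T(1-j/n)^\gamma$ and, by the mean value theorem applied to $s\mapsto (1-s)^\gamma$ on $[j/n,(j+1)/n]$,
\[
t_{j+1}-t_j = T\Bigl[(1-j/n)^\gamma - (1-(j+1)/n)^\gamma\Bigr] \leq C\,\frac1n\,(1-j/n)^{\gamma-1},
\]
and a matching lower bound of the same order holds for $j\leq n-1$. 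Substituting into the sum $\sum_{j=0}^{n-2}(t_{j+1}-t_j)^a(T-t_{j+1})^{-b}$ gives, up to constants,
\[
\sum_{j=0}^{n-2} n^{-a}(1-j/n)^{a(\gamma-1)}\,(1-(j+1)/n)^{-b\gamma}.
\]
Since $(1-(j+1)/n) \geq c(1-j/n)$ for $j\leq n-2$, this is bounded by $C n^{-a}\sum_{j=0}^{n-2}(1-j/n)^{a(\gamma-1)-b\gamma}$. The exponent is $a\gamma - a - b\gamma = \gamma(a-b) - a$, which is $> -1$ precisely because $\gamma > \frac{a-1}{a-b}$; hence the Riemann sum $\frac1n\sum_j (1-j/n)^{\gamma(a-b)-a}$ converges to a finite integral, and we get the bound $C n^{-(a-1)}$, as claimed.

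For the modified partition $\Pi^{\gamma,r}_n$, I would first dispose of the product bound. By construction $t_{k+1}-t_k = Tn^{-r/(k+1)}$ for $k=0,\dots,r-1$, so these first $r$ increments are all $\leq T n^{-1}$ (indeed the largest is $t_r - t_{r-1} = Tn^{-1}$), and they are increasing in $k$. For $j\leq r-1$ the product $\prod_{k=0}^{j}(t_{j+1}-t_{j-k})$ telescopes over the first $j+1$ increments, each of which is at most $Tn^{-1/(j+1)}\cdots$; more carefully, $\prod_{k=0}^{j}(t_{j+1}-t_{j-k}) = \prod_{i=0}^{j}(t_{i+1}-t_i) = \prod_{i=0}^{j} Tn^{-r/(i+1)} = T^{j+1} n^{-r\sum_{i=1}^{j+1} 1/i}$, and since $\sum_{i=1}^{j+1}1/i \geq 1$ this is $\leq C n^{-r}$; dividing by $(j+1)!\geq 1$ only helps. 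For $j\geq r-1$ we have $j\wedge(r-1) = r-1$ and $(j+1)\wedge r = r$, so the product is $\prod_{k=0}^{r-1}(t_{j+1}-t_{j-k})$; the factor $t_{j+1}-t_j$ is at most the largest Kusuoka-style step on $[t_r,T]$, which is $O(n^{-1})$, while each of the remaining $r-1$ factors is at most $t_{j+1}-t_{j-r+1}\leq C$ (bounded by $T$); combining with the guaranteed small factor and using that at least one factor near the start of the block contributes an extra power — actually the cleanest route is to note $\prod_{k=0}^{r-1}(t_{j+1}-t_{j-k}) \leq (t_{j+1}-t_{j-r+1})^{r}$ is too lossy, so instead bound $r-1$ of the factors by the total interval length $(T-t_r)\leq T$ and the single factor $t_{j+1}-t_j$ by $Cn^{-1}$ — this gives only $n^{-1}$, not $n^{-r}$. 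The correct observation is that the modified partition is designed so the first $r$ \emph{time points} straddle steps of size decreasing like $n^{-r/(k+1)}$; for the statement to hold for $j\geq r-1$ one must instead use that the interpolation product telescopes differently. I would therefore handle $j\geq r$ by observing the product $\prod_{k=0}^{r-1}(t_{j+1}-t_{j-k})$ includes $t_{j+1}-t_{j-r+1}$, and on the Kusuoka block each consecutive step is comparable, so $t_{j+1}-t_{j-k}\leq Ck'(t_{j+1}-t_j)$ for appropriate indices; but in fact for $j \geq r$ one checks directly $t_{j+1}-t_j \leq C n^{-1}$ and the whole product over $r$ consecutive steps is $\leq C(t_{j+1}-t_{j-r+1})\prod(\cdots)$; the honest statement requires the Kusuoka steps near $t_r$ to still be small, and the quoted $n^{-r}$ is inherited from the worst case $j=r-1$. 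I will present the $j\leq r-1$ case in full and, for $j\geq r$, reduce to showing $\prod_{k=0}^{r-1}(t_{j+1}-t_{j-k}) \leq C n^{-r}$ using the explicit Kusuoka formula and $\gamma\in(l-1,l)$.

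For the weighted sum over $\Pi^{\gamma,r}_n$, I would split $\sum_{j=0}^{n-2} = \sum_{j=0}^{r-1} + \sum_{j=r}^{n-2}$. The first (finitely many) terms each carry a factor $(t_{j+1}-t_j)^a = (Tn^{-r/(j+1)})^a$ with $(T-t_{j+1})^{-b}\leq C$ (since $t_{j+1}$ is bounded away from $T$), contributing $O(n^{-ra/(j+1)})$ which is $\leq C n^{-(a-1)}(1-r/n)^{-(a+b-1)}$ for $n$ large as long as $a/(j+1)\cdot r \geq a-1$, i.e. trivially since $r\geq$ the order. The tail sum $\sum_{j=r}^{n-2}$ is exactly the Kusuoka sum but over $[t_r,T]$ with $n-r$ steps; reusing part (1) with $n-r$ in place of $n$ gives $C(n-r)^{-(a-1)} = C n^{-(a-1)}(1-r/n)^{-(a-1)}$, and the extra $(1-r/n)^{-b}$ comes from the fact that near the right endpoint the rescaling of the Kusuoka partition onto $[t_r,T]$ of length $(T-t_r)=T(1-r/n)$ introduces a factor $(T-t_r)$ in the steps, contributing $(1-r/n)^{-b}$ through the negative power $(T-t_{j+1})^{-b}$; multiplying, $(1-r/n)^{-(a-1)-b} = (1-r/n)^{-(a+b-1)}$ as stated. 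The main obstacle is the bookkeeping in the product estimate for $j\geq r$ — making sure the $r$-fold product genuinely decays like $n^{-r}$ and not merely $n^{-1}$ — which forces careful use of the explicit step sizes near $t_r$; everything else is routine comparison of sums with integrals.
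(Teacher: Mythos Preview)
Part 1 is fine and matches the paper almost verbatim.

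In Part 2 there are two genuine gaps in your handling of the product bound.

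First, for $j\leq r-1$ you claim
\[
\prod_{k=0}^{j}(t_{j+1}-t_{j-k}) = \prod_{i=0}^{j}(t_{i+1}-t_i),
\]
but this is false: the left-hand side has factors $t_{j+1}-t_j,\, t_{j+1}-t_{j-1},\,\ldots,\, t_{j+1}-t_0$, not consecutive increments. The correct estimate (the paper's) is that $t_{j+1}-t_{j-k}$ is a sum of $k+1$ consecutive increments, each of which is at most the largest initial step $T n^{-r/(j+1)}$, so $t_{j+1}-t_{j-k}\leq (k+1)\,T\,n^{-r/(j+1)}$; the product over $k=0,\ldots,j$ then gives $(j+1)!\,T^{j+1}\,n^{-r}$, and dividing by $(j+1)!$ yields the result.

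Second, and more seriously, you never actually prove the case $j\geq r-1$: you correctly observe that the crude bounds give only $n^{-1}$, but then leave the required $n^{-r}$ as an unproved reduction. The missing observation is simply that \emph{every} increment in $\Pi^{\gamma,r}_n$ is at most $K/n$ for some constant $K$ (the initial steps are all $\leq T/n$, and the Kusuoka steps on $[t_r,T]$ are $\leq C(T-t_r)/(n-r)\leq C'/n$). Hence each factor $t_{j+1}-t_{j-k}$, being a sum of $k+1$ such increments, satisfies $t_{j+1}-t_{j-k}\leq K(k+1)/n$, and
\[
\frac{1}{r!}\prod_{k=0}^{r-1}(t_{j+1}-t_{j-k}) \leq \frac{1}{r!}\prod_{k=0}^{r-1}\frac{K(k+1)}{n} = K^r\, n^{-r}.
\]
This is the one-line argument that closes the gap; no ``explicit Kusuoka formula'' or restriction on $\gamma$ is needed here.

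Your treatment of the weighted sum over $\Pi^{\gamma,r}_n$ (split at $j=r$ and reuse Part 1 on the tail) is the right idea and essentially what the paper does.
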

\begin{proof}
\begin{enumerate}
\item 
	This is proved in a slightly different format in Crisan \& Ghazali \cite{crisanghazali}. First, note that
	\begin{align*}
	t_{j+1}-t_j &= T \left[ \left(1-\frac{j}{n}\right)^{\gamma} - \left(1-\frac{j+1}{n}\right)^{\gamma} \right] \\
	& =T \gamma \int_{\left(1-\frac{j+1}{n}\right)}^{\left(1-\frac{j}{n}\right)}  u^{\gamma-1} \, du \\
	&\leq  T \gamma \frac{1}{n} \left(1-\frac{j}{n}\right)^{\gamma-1} .
	\end{align*}
	Then, we use that $\left(1-\frac{j}{n}\right) \leq 2 \left(1-\frac{j+1}{n}\right)$ for $j \in \{0, \ldots, n-2\}$ to get
	\begin{equation}
	\label{eq:Kuspart}
	t_{j+1}-t_j \leq C \, \frac{1}{n} \left(1- \frac{j+1}{n} \right)^{\gamma-1},
	\end{equation}
	By definition,
	\[
	T-t_{j+1} = T \left( 1-\frac{j+1}{n} \right)^{\gamma},
	\]
	so that 
	\begin{align*}
\sum_{j=0}^{n-2} (t_{j+1}-t_j)^a (T-t_{j+1})^{-b}  \leq C \sum_{j=0}^{n-2} n^{-a} \left(1-\frac{j+1}{n}\right)^{a(\gamma-1)} \left( 1-\frac{j+1}{n} \right)^{-b\gamma}.
	\end{align*}
	Re-ordering the terms, we get
	\begin{align}
	\label{eq:Sabn}
\sum_{j=0}^{n-2} (t_{j+1}-t_j)^a (T-t_{j+1})^{-b}  \leq n^{-(a-1)} \sum_{j=1}^{n-1} n^{-1} \left(\frac{j}{n}\right)^{a(\gamma-1)-b\gamma} .
	\end{align}
	We note that
	\[
	\sum_{j=1}^{n-1} n^{-1} \left(\frac{j}{n}\right)^{a(\gamma-1)-b\gamma} \leq \int_0^1 x^{a(\gamma-1)-b\gamma} \, dx
	\]
	and the condition $\gamma > \frac{a-1}{a-b}$ guarantees that the exponent $a(\gamma-1)-b\gamma >-1$, so that the integral is finite.
	
\item 
	First, note $ t_{j+1}-t_{j-k} = (t_{j+1}-t_j) + (t_j-t_{j-1}) + \ldots +(t_{j-k+1}-t_{j-k})$. There are $(k+1)$ terms in this sum, and, in the case $j <r-1$, from the definition of the first $r$ steps of the partition, the biggest of these is $T n^{-r/(j+1)}$. Hence, $t_{j+1}-t_{j-k} \leq (k+1)Tn^{-r/(j+1)}$. So, in this case,
	\begin{align*}
		\frac{1}{((j+1) \wedge r  )!}\prod_{k=0}^{j \wedge (r-1)} (t_{j+1}-t_{j-k})  &= \frac{1}{ (j+1)!}\prod_{k=0}^{ j} (t_{j+1}-t_{j-k})\\
		& \leq  \frac{1}{(j+1)!}\prod_{k=0}^{j } (k+1) T \, n^{-r/(j+1)} \\
		& = T^j \, n^{-r}.
	\end{align*}
	In the case $j \geq r-1$,  there is a constant $K$ such that for each interval $t_{j+1}-t_{j}\leq K/n$, so that $t_{j+1}-t_{j-k} \leq K (k+1)/n$ and
	\begin{align*}
		\frac{1}{(r \wedge j)!}\prod_{k=0}^{(r \wedge j)-1} (t_{j+1}-t_{j-k})  &= \frac{1}{ r!}\prod_{k=0}^{ r-1} (t_{j+1}-t_{j-k})\\
		& \leq  \frac{1}{r!}\prod_{k=0}^{r-1} K\, (k+1)/n \\
		& = K^{r} \, n^{-r}.
	\end{align*}
	Now considering the sum $\textstyle \sum_{j=0}^{n-2} (t_{j+1}-t_j)^a (T-t_{j+1})^{-b}$, we split it into two parts: when $0 \leq j \leq r-1$, $t_{j+1}-t_j = T n^{-r/(j+1)} \leq C\, n^{-1}$ and
	$
	T-t_{j+1} \geq T  \left(1 - r n^{-1} \right),
	$
	using that $r \leq n/2$. So, 
	\begin{equation}
	\label{eq:halfsum}
	\sum_{j=0}^{r-1} (t_{j+1}-t_j)^a (T-t_{j+1})^{-b} \leq C \, r \, n^{-a} \left(1 - r n^{-1} \right)^{-b} \leq C \, n^{-a} \, \left(1 - r n^{-1} \right)^{-b}
	\end{equation}
	
	For $r \leq j \leq n-2$, the same analysis as in Lemma \ref{lem:Kpartition} gives
	\[
	t_{j+1}-t_j \leq C \, \frac{1}{n} \left(1- \frac{j+1-r}{n-r} \right)^{\gamma-1},
	\]
	By definition,
	\[
	T-t_{j+1} = (T-t_r) \left( 1-\frac{j+1-r}{n-r} \right)^{\gamma} \geq T \left(1-r/n\right) \left( 1-\frac{j+1-r}{n-r} \right)^{\gamma},
	\]
	The proof then follows as in the first part of this lemma to give for $\gamma > \textstyle\frac{a-1}{a-b}$
	\begin{equation*}
		\sum_{j=r}^{n-2} (t_{j+1}-t_j)^a (T-t_{j+1})^{-b}   \leq
		C \,\left(1-r/n\right)^{-b} (n-r)^{-(a-1)} .
	\end{equation*}
	We then note that $\frac{1}{n-r} = \frac{1}{n} . \frac{1}{1-r/n}$ 
	Combining this with \eqref{eq:halfsum} gives the result.
\end{enumerate}
\end{proof}

\begin{lemma}[Polynomial approximations]
	\label{lem:polys}
	For the Taylor approximation, there exists a finite collection of functions $\mathcal{H} \subset \C^{\infty}_b(\RR^N;\RR)$ such that 
	\begin{align}
		\label{eq:Taypolyerror}
		\begin{split}
			\sup_{s \in [t_j,t_{j+1}]} \left| \EE \varphi_i(X^{0,y}_{s}) - E^{T,y}_{s}(\varphi_i) \right| & \leq  C \bigg\{ (t_{j+1}-t_j)^{q+1} \\
			& \quad \quad+   \sum_{\psi \in \mathcal{H} } \left|\left( Q^{E^T,y,\Pi_n}_{t_j}  - P^{y}_{0,t_j} \right)\psi(y)  \right| \bigg\}.
		\end{split}
	\end{align}
For the Lagrange interpolation method:
\begin{align}
	\label{eq:Lagpolyerror}
	\begin{split}
		\sup_{s \in [t_j,t_{j+1}]} \left| \EE \varphi_i(X^{0,y}_{s}) - E^{y}_{s}(\varphi_i) \right|
		\leq C \, \bigg\{ & \frac{1}{((j+1) \wedge r)!}\prod_{k=0}^{j \wedge (r-1)} (t_{j+1}-t_{j-k}) \\
		& + \sum_{k=0}^{j \wedge (r-1)} \left| \left( Q^{E,y,\Pi_n}_{t_{j-k}}  - P^{y}_{0,t_{j-k}} \right) \varphi_i  \right| \bigg\}
	\end{split}
\end{align}
\end{lemma}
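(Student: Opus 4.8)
In both cases I would split the target quantity into a purely analytic (deterministic) polynomial-approximation error and a term that only sees the discrepancy between $\PP$- and $\QQ^{\Pi_n}$-expectations, the latter then being matched to cubature errors of the form $(Q^{E,y,\Pi_n}_{0,t}-P^y_{0,t})\psi(y)$ using that $\EE[\psi(X^{0,y}_t)]=P^y_{0,t}\psi(y)$ (since $X^{0,y}=X^{0,y,y}$) and that the cubature-tree average $\EE^{\QQ^{\Pi_n}}[\psi(X^{0,y}_t)]$ built into $\mathcal{T}^Q$ and into $E^L$ is, by construction of $\QQ^{\Pi_n}$ and of the two algorithms, exactly $Q^{E,y,\Pi_n}_{0,t}\psi(y)$.

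\textbf{Taylor method.} By (A2) the path $t\mapsto\EE\varphi_i(X^{0,y}_t)$ is the element $G$ of $\bar{\mathcal S}$ attached via \eqref{eq:G} to $g=\varphi_i$ (viewed on $\RR^N\times\RR$, independent of its second slot). I would first apply the order-$q$ expansion \eqref{eq:TaylorG} centred at $t_j$ instead of at $0$ — the derivation is identical, starting the It\^o expansion at $t_j$ — so that for $s\in[t_j,t_{j+1}]$,
\[
\EE\varphi_i(X^{0,y}_s)=\sum_{k=0}^q\frac{(s-t_j)^k}{k!}\,(\mathcal{T}_{t_j})^k\!\big(\EE\varphi_i(X^{0,y})\big)+R_j(s),
\]
where $R_j(s)$ is, as the iteration of \eqref{eq:G} shows, a $(q+1)$-fold iterated time integral of $(\mathcal{T}_u)^{q+1}(\EE\varphi_i(X^{0,y}))\in\bar{\mathcal S}$. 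Since the latter is a finite sum of finite products of expectations of functions in $\C^\infty_b$ built from $\varphi_i,V_0,\dots,V_d$ and finitely many of their derivatives, it is bounded uniformly in $u$ and $y$ by (A2), so $|R_j(s)|\le C(s-t_j)^{q+1}/(q+1)!\le C(t_{j+1}-t_j)^{q+1}$; this is the first term of \eqref{eq:Taypolyerror}. Subtracting the definition of $E^{T,y}_s(\varphi_i)$ leaves $\sum_{k=0}^q\frac{(s-t_j)^k}{k!}\big[(\mathcal{T}_{t_j})^k-(\mathcal{T}^Q_{t_j})^k\big](\EE\varphi_i(X^{0,y}))$. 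Now $(\mathcal{T}^Q_{t_j})^k$ differs from $(\mathcal{T}_{t_j})^k$ only by replacing, in this finite sum of products, each factor of the form $P^y_{0,t_j}\psi(y)$ — $\psi$ ranging over the \emph{finite} collection $\mathcal H\subset\C^\infty_b(\RR^N;\RR)$ generated by at most $q$ alternations of $\mathcal L$ and $\partial_y$ on $\varphi_i$ — with $Q^{E^T,y,\Pi_n}_{0,t_j}\psi(y)$. Expanding a difference of products as a telescoping sum in which one factor changes at a time, and using that each factor lies in $[-\|\psi\|_\infty,\|\psi\|_\infty]$ while the surrounding dependence is smooth hence locally Lipschitz, one bounds this by $C\sum_{\psi\in\mathcal H}|(Q^{E^T,y,\Pi_n}_{0,t_j}-P^y_{0,t_j})\psi(y)|$; together with $(s-t_j)^k\le C$ on $[0,T]$ this is the second term of \eqref{eq:Taypolyerror}.

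\textbf{Lagrange method.} Set $g(t):=\EE\varphi_i(X^{0,y}_t)$, which by iterating It\^o's formula as in \eqref{eq:TaylorG} is $\C^\infty$ on $[0,T]$ with all derivatives bounded uniformly in $y$ (only (A2) is used), and let $m:=(j+1)\wedge r$. Let $\widetilde L_j$ interpolate the \emph{exact} values $\{(t_{j-k},g(t_{j-k}))\}_{k=0}^{j\wedge(r-1)}$; then $E^y_s(\varphi_i)$ interpolates the same nodes with the cubature values $Q^{E,y,\Pi_n}_{0,t_{j-k}}\varphi_i(y)$, and $g(s)-E^y_s(\varphi_i)=[g(s)-\widetilde L_j(s)]+[\widetilde L_j(s)-E^y_s(\varphi_i)]$. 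For the first bracket, the classical interpolation remainder gives $g(s)-\widetilde L_j(s)=\frac{g^{(m)}(\xi)}{m!}\prod_{k=0}^{j\wedge(r-1)}(s-t_{j-k})$ with $\xi$ in the convex hull of $s$ and the nodes, hence in $[0,T]$; bounding $|s-t_{j-k}|\le t_{j+1}-t_{j-k}$ and $\|g^{(m)}\|_\infty\le C$ yields the first term of \eqref{eq:Lagpolyerror}. For the second bracket, linearity of Lagrange interpolation gives
\[
\widetilde L_j(s)-E^y_s(\varphi_i)=\sum_{k=0}^{j\wedge(r-1)}\big(P^y_{0,t_{j-k}}-Q^{E,y,\Pi_n}_{0,t_{j-k}}\big)\varphi_i(y)\,\ell_k(s),
\]
with $\ell_k$ the Lagrange basis polynomials. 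For $j\ge r-1$ the $r$ nodes lie in a window of $r$ consecutive partition points in which, by the increment bound \eqref{eq:Kuspart} and the definition of $\Pi^{\gamma,r}_n$, neighbouring increments are comparable once $n$ is large; hence $|\ell_k(s)|\le C$ for $s\in[t_j,t_{j+1}]$, giving the second term. For the finitely many initial indices $j<r-1$ the $\ell_k$ are \emph{not} $O(1)$ (one is extrapolating across the tiny steps $t_{k+1}-t_k=Tn^{-r/(k+1)}$), but a direct induction on the step index, using the one-step cubature bound \eqref{eq:smoothCubError} and the first-term estimate just proved at earlier steps, shows the cumulative errors $(P^y_{0,t_{j-k}}-Q^{E,y,\Pi_n}_{0,t_{j-k}})\varphi_i(y)$ there are $O(n^{-r})$ (in fact smaller), so $|\ell_k(s)|\,|(P^y_{0,t_{j-k}}-Q^{E,y,\Pi_n}_{0,t_{j-k}})\varphi_i(y)|$ is dominated by the first term of \eqref{eq:Lagpolyerror} and the bound still holds.

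\textbf{Main obstacle.} The delicate point is the Lebesgue-constant estimate for the $\ell_k$ on the \emph{extrapolation} interval $[t_j,t_{j+1}]$. For the bulk of the partition this reduces to comparability of neighbouring Kusuoka increments; the genuinely fiddly case is the opening block, where one must trade the (polynomially in $n$) large basis polynomials against the (polynomially in $n$) small cubature errors carried by the tiny first steps and verify the product never exceeds the deterministic interpolation error already isolated in the first term. A secondary bookkeeping point, in the Taylor case, is making the finite collection $\mathcal H$ explicit and checking that $(\mathcal T^Q)^k$ really is obtained from $(\mathcal T)^k$ by the claimed factor-by-factor substitution.
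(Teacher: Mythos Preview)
Your argument for both parts follows the paper's route: for the Taylor method you isolate the order-$(q{+}1)$ remainder and then telescope through the finitely many $\C^\infty_b$ factors to bound $[(\mathcal T_{t_j})^k-(\mathcal T^Q_{t_j})^k]$, exactly as the paper does; for the Lagrange method you use the classical interpolation remainder for the first term and linearity in the interpolated data for the second. The paper, however, does \emph{not} worry about Lebesgue constants at all: it simply asserts a bound of the form $\sup_{t\in[0,T]}|L_j(t)|\le C(T)$, uniform in the node configuration, and moves on. You are right that for the opening block of $\Pi^{\gamma,r}_n$, where successive increments differ by powers of $n$, the extrapolation factors $\ell_k(s)$ on $[t_j,t_{j+1}]$ are \emph{not} $O(1)$; in this respect your discussion is more honest than the paper's own proof.

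That said, your proposed fix for those finitely many initial indices has a circularity problem. To absorb the large $|\ell_k(s)|$ you want to bound the cumulative errors $(P^y_{0,t_{j-k}}-Q^{E,y,\Pi_n}_{0,t_{j-k}})\varphi_i(y)$ by $O(n^{-r})$ using the one-step bound \eqref{eq:smoothCubError}, but the cumulative error at $t_{j-k}$ decomposes into local pieces $[P^y_{t_m,t_{m+1}}-Q^{E,y}_{t_m,t_{m+1}}]$, each of which contains a contribution from $[P^y-P^{E,y}]$ that is controlled precisely through the polynomial-approximation estimate \eqref{eq:Lagpolyerror} you are in the middle of proving (cf.\ \eqref{eq:genLocalerror} downstream). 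One can untangle this as a joint induction on $j$ that establishes the lemma and the cumulative bound simultaneously for $j<r$, but then \eqref{eq:Lagpolyerror} ceases to be a standalone estimate with a partition-independent constant and becomes entangled with the global error decomposition. If you want the lemma exactly as stated, with $C$ uniform in the partition, you would need a direct bound on $\sup_{s\in[t_j,t_{j+1}]}|\ell_k(s)|$ --- and for the first $r$ steps of $\Pi^{\gamma,r}_n$ no such bound is available, so either the statement, the partition, or the proof architecture has to give.
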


\begin{proof}
	
	\underline{Taylor method:}
	Now we recall, for the Taylor method with $s \in [t_j,t_{j+1}]$,
	\[
	E^{T,y}_{s}(\varphi_i) := \sum_{k=0}^q \frac{1}{k!} \, (	\mathcal{T}^{Q}_{t_j})^k (\EE \varphi_i(X^{0,y})) \, (s-t_j)^k.
	\]
	We will estimate the error $\left| \EE \varphi_i(X^{0,y}_{s}) - E^{T,y}_{s}(\varphi_i) \right|$ by splitting it into
	\begin{equation}
	\label{eq:taylore}
	\left| \EE \varphi_i(X^{0,y}_{s}) - E^{T,y}_{s}(\varphi_i) \right| \leq \left| \EE \varphi_i(X^{0,y}_{s}) - \widehat{E}^{y}_{s}(\varphi_i) \right| + \left|\widehat{ E}^{y}_{s}(\varphi_i) - E^{T,y}_{s}(\varphi_i) \right|, 
	\end{equation}
	where 
	\[
	\widehat{E}^{y}_{s}(\varphi_i) := \sum_{k=0}^q \frac{1}{k!} \, (	\mathcal{T}_{t_j})^k (\EE \varphi_i(X^{0,y}))\, (s-t_j)^k, \quad s \in [t_j,t_{j+1})
	\]
	is the truncated Taylor expansion of $s \mapsto \EE \varphi_i(X^{0,y}_{s})$ of order $q$ around $t_j$. It is straightforward that
	\begin{align*}
		\left| \EE \varphi_i(X^{0,y}_{s}) - \widehat{E}^{y}_{s}(\varphi_i) \right| \leq C \, (s-t_j)^{q+1},
	\end{align*}
	and
	\begin{align}
		\label{eq:taylorcube}
		\left| \widehat{E}^{y}_{s}(\varphi_i) -  	E^{T,y}_{s}(\varphi_i) \right| \leq  \sum_{k=0}^q \frac{1}{k!} (s-t_j)^k \, \left| \left[(	\mathcal{T}_{t_j})^k - (\mathcal{T}^{Q}_{t_j})^k \right](\EE \varphi_i(X^{0,y}))  \right|.
	\end{align}
	Now, recall that $	\mathcal{T}_{t_j}^k (\EE \varphi_i(X^{0,x})) \in \bar{\mathcal{S}}$, so it can be written as a sum of products of terms of the form
	\[
	\EE  \, g  \left(X_{t_j}^{0,x}, \EE \varphi(X_{t_j}^{0,x}) \right) ,
	\]
	for some $g \in \C^{\infty}_b(\RR^N \times \RR;\RR)$, and $\varphi \in \C^{\infty}_b(\RR^N;\RR)$. For this type of term, the error in replacing expectations under $\PP$ with expectations under $\QQ^{\Pi_n}$ can be bounded by
	\begin{align*}
		&\left|	\EE  \, g  \left(X_{t_j}^{0,x}, \EE \varphi(X_{t_j}^{0,x}) \right) - 	\EE_{\QQ^{\Pi_n}}  \, g  \left(X_{t_j}^{0,x}, \EE_{\QQ^{\Pi_n}} \varphi(X_{t_j}^{0,x}) \right) \right| \\
		\leq & \left| 	 \EE \varphi(X_{t_j}^{0,x})  - 	  \EE_{\QQ^{\Pi_n}} \varphi(X_{t_j}^{0,x})  \right| +  \left|	\EE  \, g  \left(X_{t_j}^{0,x}, \EE_{\QQ^{\Pi_n}} \varphi(X_{t_j}^{0,x})  \right) - 	\EE_{\QQ^{\Pi_n}}  \, g  \left(X_{t_j}^{0,x}, \EE_{\QQ^{\Pi_n}} \varphi(X_{t_j}^{0,x})  \right) \right| 
	\end{align*}
	Due to the form of $	\mathcal{T}^k (\EE \varphi_i(X^{0,x})) \in \bar{\mathcal{S}}$, 
	the error $\left[(	\mathcal{T}_{t_j})^k - (\mathcal{T}^{Q}_{t_j})^k \right](\EE \varphi_i(X^{0,y}))$ 
	can be decomposed as a sum of products of such errors for different functions $g$ and $\varphi$.
	We define $\mathcal{H}_{i,k}$ to be the collection of these functions appearing in the expression for $\mathcal{T}^k(\EE \varphi_i(X^{0,y}))$. 
	Then, the error
	\[
	\left| \left[(	\mathcal{T}_{t_j})^k - (\mathcal{T}^{Q}_{t_j})^k \right](\EE \varphi_i(X^{0,y}))  \right|
	\]
	can be bounded by a constant multiple of 
	\[
	\sum_{\psi \in \mathcal{H}_{i,k}}	\left|\left( \EE_{\QQ^{\Pi_n}}  - \EE \right) \psi(X^{0,y}_{t_j})  \right| = \sum_{\psi \in \mathcal{H}_{i,k} }\left|\left( Q^{E^T,y,\Pi_n}_{t_j}  - P^{y}_{0,t_j} \right)\psi(y)  \right|.
	\]
	So, 
	\begin{align*}
		&\sup_{s \in [t_j,t_{j+1}]}	\left| \widehat{E}^{y}_{s}(\varphi_i) -  	E^{T,y}_{s}(\varphi_i) \right|
		\leq  C \, \sum_{k=0}^q \frac{1}{k!} (t_{j+1}-t_j)^k  \sum_{i=0}^d \sum_{\psi \in \mathcal{H}_{i,k} } \left|\left( Q^{E^T,y,\Pi_n}_{t_j}  - P^{y}_{0,t_j} \right)\psi(y)  \right|,
	\end{align*}
	and the largest term in the outer sum on the right hand side occurs when $k=0$, so using this and defining $\mathcal{H}:=\cup_{i=1}^d \mathcal{H}_{i,0}$, the estimate \eqref{eq:taylore} becomes
	\begin{align*}
		\sup_{s \in [t_j,t_{j+1}]} \left| \EE \varphi_i(X^{0,y}_{s}) - E^{T,y}_{s}(\varphi_i) \right| & \leq  C \bigg\{ (t_{j+1}-t_j)^{q+1} \\
		& \quad \quad+ q  \sum_{\psi \in \mathcal{H} } \left|\left( Q^{E^T,y,\Pi_n}_{t_j}  - P^{y}_{0,t_j} \right)\psi(y)  \right| \bigg\}.
	\end{align*}
	
	\underline{Lagrange method}
	
		Let $z \in C^{k+1}([0,T]; \RR)$. Recall that we denote by $L[(t_1,x_1), \ldots, (t_k,x_k)]$ the Lagrange Interpolating Polynomial passing through the points $\left(t_1,x_1 \right), \ldots, \left(t_k,x_k \right)$. The error in approximating $z(t)$ with  the polynomial $L[(t_1,z(t_1)), \ldots, (t_k,z(t_k))]$ for any $t \in [0,T]$ is
		\begin{equation}
		\label{eq:polyerror}
		z(t)- L[(t_1,z(t_1)), \ldots, (t_k,z(t_k))] (t) = \frac{1}{k!} \, z^{(k)} (\xi) \, \prod_{j=1}^k (t-t_j),
		\end{equation}
		where $\xi$ is some point in $[0,T]$.	
		Note also that we can write $L[(t_1,x_1), \ldots, (t_k,x_k)]$ terms of the Lagrange basis polynomials $L_j :[0,T] \to \RR$ as
		\[
		L[(t_1,x_1), \ldots, (t_k,x_k)] (t) = \sum_{j=1}^k x_j \, L_j(t),
		\]
		where 
		\[
		L_j(t) = \prod_{i=1, i \neq j}^k \frac{t-t_i}{t_j-t_i}
		\]
		So, the difference between polynomilas interpolating different points on the same time grid is given by
		\[
		L[(t_1,x_1), \ldots, (t_k,x_k)] (t) - L[(t_1,y_1), \ldots, (t_k,y_k)] (t) = \sum_{j=0}^k (x_j-y_j) \, L_j(t).
		\]
		and, in particular,
		\begin{equation}
		\label{eq:polydiff}
		\sup_{t \in [0,T]} \left|L^{k}[x_1, \ldots, x_k] (t) - L^{k}[y_1, \ldots, y_k] (t) \right| \leq C(T) \sum_{j=1}^k  \left| x_j-y_j \right|.
		\end{equation}
		Now, recall the definition of the Lagrange interpolation approximation is
		\[
		E^{L,y}_t(\varphi_i): =  L \left[ \left\{ \left(t_{j-k} ,\EE^{\QQ^{\Pi_n}} \varphi_i \left (X^{0,y}_{t_{j-k}} \right)  \right) \right\}_{k=0}^{j \wedge (r-1)} \right]  (t), \quad \quad t \in [t_j,t_{j+1}],
		\]
		and consider the same object but with all expecations under the Wiener measure, $\PP$:
		\[
		E^{Ly,\PP}_t(\varphi_i): =  L \left[ \left\{ \left(t_{j-k} ,\EE \varphi_i \left (X^{0,y}_{t_{j-k}} \right)  \right) \right\}_{k=0}^{j \wedge (r-1)} \right]  (t), \quad \quad t \in [t_j,t_{j+1}].
		\]
		Then, we can split the error  error $\EE \varphi_i(X_t^{0,y}) - E^y_t(\varphi_i)$ into 
		\[
		\left[	\EE \varphi_i(X_t^{0,y}) - E^{L,y,\PP}_t(\varphi_i) \right] + \left[E^{Ly,\PP}_t(\varphi_i) - E^{L,y}_t(\varphi_i)\right]
		\]
		We are able to control using the first term suing \eqref{eq:polyerror} and the second term  using \eqref{eq:polydiff}. The result follows immediately.

\end{proof}

\section{Proof of Theorem \ref{th:MKVCub}}
\begin{remark}
	\label{rem:expl}
	For the Taylor method, our proof is essentially the same as that in \cite{nicecubature}, when $ f \in \C^{\infty}_b(\RR^N;\RR)$. When $f$ is Lipschitz, however, we split the local errors differently. In \cite{nicecubature}, the error is split into
	\begin{align}
		\left[ P^y_{t_j,t_{j+1}}- Q_{{t_j,t_{j+1}}}^{E,y} \right]u^y  = \left[ P_{{t_j,t_{j+1}}}^{y}- P_{{t_j,t_{j+1}}}^{E,y} \right] u^y
		+ \left[ P^{E,y}_{t_j,t_{j+1}}- Q_{{t_j,t_{j+1}}}^{E,y} \right]u^y .
	\end{align}
	The term $\left[ P_{{t_j,t_{j+1}}}^{y}- P_{{t_j,t_{j+1}}}^{E,y} \right]u^y $ is then estimated in terms of the difference of the generators of the processes $X$ and $^E X$ applied to $u^y$, which in turn depends on an estimate on $\nabla^2 u^y$. In the uniformly elliptic case with $f$ Lipschitz, in \cite{nicecubature}, $|\nabla^2 u^y(t,x)| \leq C (T-t)^{-1/2}$. However, in the H\"ormander case, $|\nabla^2 u^y(t,x)| \leq C (T-t)^{-m/2}$, where $m$ is the order of the H\"ormander condition, which could be very large. Instead, here, we split the error into
	\begin{align}
		\left[ P^y_{t_j,t_{j+1}}- Q_{{t_j,t_{j+1}}}^{E,y} \right]u^y  &= \left[ Q_{{t_j,t_{j+1}}}^{y}- Q_{{t_j,t_{j+1}}}^{E,y} \right]u^y   + \left[ P^{y}_{t_j,t_{j+1}}- Q_{{t_j,t_{j+1}}}^{y} \right]u^y.
	\end{align} 
	We control the term $\left[ Q_{{t_j,t_{j+1}}}^{y}- Q_{{t_j,t_{j+1}}}^{E,y} \right]u^y$ using only the Lipschitz constant of $u^y$ which is uniformly bounded in time.
\end{remark}

\subsection{Smooth bounded terminal condition}

We introduce the generator associated to the process $X^{0,y}$
\begin{align*}
	& \mathcal{L}_s^y: = V_0(\cdot, \EE \varphi_0(X_s^{0,y}))  + \frac{1}{2} \sum_{i=1}^d V_i(\cdot, \EE \varphi_i(X_s^{0,y}))^2 ,
\end{align*}
and note that
$u^y(t,x)=P^y_{t,T}f(x)$ solves the PDE
\begin{align}
	\label{eq:scalarPDE}
	\begin{split}
		\left( \partial_t + \mathcal{L}_t^y \right) u^y(t,x) &= 0, \\
		u^y(T,x)&=f(x).
	\end{split}
\end{align}
In the analysis of each scheme, we split the local error into
\begin{align}
	\label{eq:Eerror}
	\left[ P^y_{t_j,t_{j+1}}- Q_{{t_j,t_{j+1}}}^{E,y} \right] u^y(t_{j+1},x) &= \left[ P^y_{t_j,t_{j+1}}- P_{{t_j,t_{j+1}}}^{E,y} \right] u^y(t_{j+1},x) \\
	\label{eq:Cuberror}	& \quad \quad + \left[ P^{E,y}_{t_j,t_{j+1}}- Q_{{t_j,t_{j+1}}}^{E,y} \right] u^y(t_{j+1},x).
\end{align}
Equation \eqref{eq:Eerror} is the error due to approximating the $ \EE \varphi_i(X_t^{0,y})$ by $E^y_t(\varphi_i)$, and \eqref{eq:Cuberror} is a one-step cubature error. Now,
\begin{align*}
	\left[ P^y_{t_j,t_{j+1}}- P_{{t_j,t_{j+1}}}^{E,y} \right] u^y(t_{j+1},x) &= \EE \left[ u^y(t_{j+1},X_{t_{j+1}}^{t_j,x,y}) - u^y(t_{j+1},^EX_{t_{j+1}}^{t_j,x,y}) \right] \\
	&=  \EE \int_{t_j}^{t_{j+1}} \left( \mathcal{L}_s^y - \mathcal{L}_s^{E,y} \right) u^y(s,^EX_{s}^{t_j,x,y}) \, ds
\end{align*}
Using the Lipschitz property of the coefficients, we get
\[
\left| \left( \mathcal{L}_s^y - \mathcal{L}_s^{E,y} \right) u^y(s,^EX_{s}^{t_j,x,y})  \right| \leq C \,	\|u^y(s,\cdot)\|_{2, \infty} \, \sum_{i=0}^d \left| \EE \varphi_i(X^{0,y}_{s}) - E^{y}_{s}(\varphi_i) \right|.
\]
Now, we recall from Lemma \ref{lem:MKCcubreg} that $\|u^y(s,\cdot)\|_{2, \infty} \leq C$ when $f \in \C^{\infty}_b(\RR^N;\RR)$, so
\begin{align}
	\label{eq:genLocalerror}
	\left[ P^y_{t_j,t_{j+1}}- P_{{t_j,t_{j+1}}}^{E,y} \right] u^y(t_{j+1},x) \leq C \int_{t_{j}}^{t_{j+1}}  \sum_{i=0}^d \left| \EE \varphi_i(X^{0,y}_{s}) - E^{y}_{s}(\varphi_i) \right| \, ds
\end{align}

Now, to control the right hand side above, we use Lemma \ref{lem:polys} and we split the proof depending on the individual scheme.

\subsubsection{Taylor Method}

Using Lemma \ref{lem:polys}, for the Taylor method,  \eqref{eq:genLocalerror} becomes
\begin{align}
	\begin{split}
		\left[ P^y_{t_j,t_{j+1}}- P_{{t_j,t_{j+1}}}^{E^T,y} \right] u^y(t_{j+1},x) \leq C (t_{j+1} - t_{j}) \bigg\{ &(t_{j+1}-t_j)^{q+1} \\
		& \quad \quad+  \sum_{\psi \in \mathcal{H} } \left|\left( Q^{E^T,y,\Pi_n}_{t_j}  - P^{y}_{0,t_j} \right)\psi(y)  \right| \bigg\}.
	\end{split}
\end{align}
Summing up the local errors, the global error is then given by
\begin{align}
	\begin{split}
		\left[ P^y_{0,t_{n}}- Q_{t_{n}}^{E^T,y,\Pi_n} \right] f(x) & \leq C \sum_{j=0}^{n-1} \bigg\{ (t_{j+1}-t_j)^{q+2} \\
		& + q (t_{j+1}-t_j) \sum_{\psi \in \mathcal{H} } \left|\left( Q^{E^T,y,\Pi_n}_{t_j}  - P^{y}_{0,t_j} \right)\psi(y)  \right| + (t_{j+1}-t_j)^{(l+1)/2}\bigg\}.
	\end{split}
\end{align}
The above holds for any $f \in \C^{\infty}_b(\RR^N;\RR)$. In particular, we can take $f= \psi$ for any $\psi \in \mathcal{H}$. Doing this and repeatedly applying the discrete Gronwall inequality, 
\begin{align}
	\label{eq:Taylorsmoothglobal}
	\begin{split}
		\left[ P^y_{0,t_{n}}- Q_{{0,t_{n}}}^{E,y} \right] f(x) & \leq C \exp \left(  q \sum_{j=0}^{n-1} (t_{j+1}-t_j) \right)  \\
		&\times  \sum_{j=0}^{n-1} \bigg\{ (t_{j+1}-t_j)^{q+2}  + (t_{j+1}-t_j)^{(l+1)/2}\bigg\}.
	\end{split}
\end{align}

\subsubsection{Lagrange interpolation method}

Using Lemma \ref{lem:polys}, for the Lagrange interpolation method,  \eqref{eq:genLocalerror} becomes
\begin{align}
	\begin{split}
		\left[ P^y_{t_j,t_{j+1}}- P_{{t_j,t_{j+1}}}^{E^L,y} \right] u^y(t_{j+1},x) \leq C (t_{j+1} - t_{j}) \bigg\{ & \frac{1}{((j+1) \wedge r)!}\prod_{k=0}^{j \wedge (r-1)} (t_{j+1}-t_{j-k}) \\
		& + \sum_{k=0}^{j \wedge (r-1)} \left| \left( Q^{E^T,y,\Pi_n}_{t_{j-k}}  - P^{y}_{0,t_{j-k}} \right) \varphi_i  \right| \bigg\}.
	\end{split}
\end{align}
The global error is then given by
\begin{align}
	\begin{split}
		\left[ P^y_{0,t_{n}}- Q_{{0,t_{n}}}^{E^L,y} \right] f(x) \leq C \sum_{j=0}^{n-1} \bigg[ & (t_{j+1} - t_{j}) \bigg\{  \frac{1}{((j+1) \wedge r)!}\prod_{k=0}^{j \wedge (r-1)} (t_{j+1}-t_{j-k}) \\
		&+ \sum_{k=0}^{j \wedge (r-1)} \left| \left( Q^{E^T,y,\Pi_n}_{t_{j-k}}  - P^{y}_{0,t_{j-k}} \right) \varphi_i  \right| \bigg\} +  (t_{j+1}-t_j)^{(l+1)/2}\bigg].
	\end{split}
\end{align}
Taking $f= \varphi_i$ for any $i=0, \ldots, d$ and using discrete Gronwall inequality, we obtain
\begin{align}
	\label{eq:Lagglobalsmooth}
	\begin{split}
		\left[ P^y_{0,t_{n}}- Q_{{0,t_{n}}}^{E^L,y} \right] f(x)  \leq C & \exp \left(  r \sum_{j=0}^{n-1} (t_{j+1}-t_j) \right) \\
		&\times \sum_{j=0}^{n-1} \bigg\{  \frac{(t_{j+1}-t_j)}{((j+1) \wedge r)!}\prod_{k=0}^{j \wedge (r-1)} (t_{j+1}-t_{j-k}) 
		+ (t_{j+1}-t_j)^{(l+1)/2} \bigg\}.
	\end{split}
\end{align}

\subsection{Lipschitz terminal condition, $m = 1$}

In this case, the estimate we have on the first two derivatives of $u^y$ is $\|u^y(t, \cdot)\|_{2,\infty} \leq C (T-t)^{-1/2}$. Using this estimate we get, similarly to \eqref{eq:genLocalerror},
\begin{align}
	\label{eq:genlocalerrorm1}
	\left[ P^y_{t_j,t_{j+1}}- P_{{t_j,t_{j+1}}}^{E,y} \right] u^y(t_{j+1},x) \leq C\, (T-t_{j+1})^{-1/2} \int_{t_{j}}^{t_{j+1}}  \sum_{i=0}^d \left| \EE \varphi_i(X^{0,y}_{s}) - E^{y}_{s}(\varphi_i) \right| \, ds
\end{align}

\subsubsection{Taylor method}

The same arguments as the previous section give the global error is
\begin{align}
	\begin{split}
		\left[ P^y_{0,t_{n}}- Q_{t_{n}}^{E^T,y,\Pi_n} \right] f(x) & \leq C  \sum_{j=0}^{n-2} (T-t_{j+1})^{-1/2} \bigg\{ (t_{j+1}-t_j)^{q+2} \\
		& + (t_{j+1}-t_j) \sum_{\psi \in \mathcal{H}} \left|\left( Q^{E^T,y,\Pi_n}_{t_j}  - P^{y}_{0,t_j} \right)\psi(y)  \right| + (t_{j+1}-t_j)^{(l+1)/2}\bigg\} \\
		& + \left[ P^y_{t_{n-1},t_{n}}- Q_{t_{n-1}, t_{n}}^{E^T,y} \right] f(x).
	\end{split}
\end{align}
Since $\psi \in \C^{\infty}_b(\RR^N;\RR)$, in particular it is Lipschitz. The above estimate holds for all Lipschitz $f$, so taking $f=\psi \in \mathcal{H}$ and using the discrete Gronwall inequality, we get
\begin{align}
	\label{eq:Taylorlipglobalm1}
	\begin{split}
		\left[ P^y_{0,t_{n}}- Q_{t_{n}}^{E^T,y,\Pi_n} \right] f(x) & \leq C \exp \left( \sum_{j=0}^{n-2} (T-t_{j+1})^{-1/2} (t_{j+1}-t_j) \right)  \\
		& \times  \sum_{j=0}^{n-2} (T-t_{j+1})^{-1/2} \left[ (t_{j+1}-t_j)^{q+2} 
		+ (t_{j+1}-t_j)^{(l+3)/2}\right] \\
		& + \left[ P^y_{t_{n-1},t_{n}}- Q_{t_{n-1}, t_{n}}^{E^T,y} \right] f(x).
	\end{split}
\end{align}
Now, we recall that in this setting we use the Kusuoka partition $\Pi^{\gamma}_n$ with $\gamma>l-1$. Using Lemma \ref{lem:Kpartition}, we can see that $\textstyle \sum_{j=0}^{n-1} (T-t_{j+1})^{-1/2} (t_{j+1}-t_j)$ is bounded independently of $n$. For the other two sums, we also use Lemma \ref{lem:Kpartition} and for the final term we use \eqref{eq:liplaststep} to get
\begin{align*}
	\begin{split}
		\left[ P^y_{0,t_{n}}- Q_{t_{n}}^{E^T,y,\Pi_n} \right] f(x) & \leq C  \left( n^{-(l-1)/2} + n^{-(q+1)} + n^{-\gamma/2}  \right) .
	\end{split}
\end{align*}
Noting $\gamma>l-1$ gives the result \eqref{eq:errLipTaylorm1}.

\subsubsection{Lagrange interpolation method}

The same arguments as the previous section give the global error is
\begin{align}
	\begin{split}
		\left[ P^y_{0,t_{n}}- Q_{{0,t_{n}}}^{E^L,y} \right] f(x) \leq C &\sum_{j=0}^{n-2} (T-t_{j+1})^{-1/2} \bigg[  (t_{j+1} - t_{j}) \bigg\{  \frac{1}{((j+1) \wedge r)!}\prod_{k=0}^{j \wedge (r-1)} (t_{j+1}-t_{j-k}) \\
		&+ \sum_{k=0}^{(r \wedge j)-1} \left| \left( Q^{E^L,y,\Pi_n}_{t_{j-k}}  - P^{y}_{0,t_{j-k}} \right) \varphi_i  \right| \bigg\} +  (t_{j+1}-t_j)^{(l+1)/2}\bigg] \\
		& + \left[ P^y_{t_{n-1},t_{n}}- Q_{t_{n-1}, t_{n}}^{E^L,y} \right] f(x).
	\end{split}
\end{align}
Since $\varphi_i \in \C^{\infty}_b(\RR^N;\RR)$, in particular it is Lipschitz. The above estimate holds for all Lipschitz $f$, so taking $f=\varphi_i$, $i=0, \ldots, d$ and using the discrete Gronwall inequality, we get
\begin{align}
	\label{eq:Laglipglobalm1}
	\begin{split}
		\left[ P^y_{0,t_{n}}- Q_{t_{n}}^{E^L,y,\Pi_n} \right] f(x) & \leq C \exp \left(r \, \sum_{j=0}^{n-2} (T-t_{j+1})^{-1/2} (t_{j+1}-t_j) \right)  \\
		& \times  \sum_{j=0}^{n-2} (T-t_{j+1})^{-1/2} \left[ \frac{1}{((j+1) \wedge r)!}\prod_{k=0}^{j \wedge (r-1)} (t_{j+1}-t_{j-k})
		+ (t_{j+1}-t_j)^{(l+1)/2}\right] \\
		& + \left[ P^y_{t_{n-1},t_{n}}- Q_{t_{n-1}, t_{n}}^{E^L,y} \right] f(x).
	\end{split}
\end{align}
By part 1 Lemma \ref{lem:Kpartition},
\[
	\sum_{j=0}^{n-2} (T-t_{j+1})^{-1/2} \, (t_{j+1}-t_j)^{(l+1)/2} \leq C \, n^{-(l-1)/2} \, (1-r/n)^{-l/2}.
\]
Now, we recall that in this setting we use the modified Kusuoka partition $\Pi^{\gamma,r}_n$ with $\gamma>l-1$.
We note
\begin{align*}
	&\sum_{j=0}^{n-2} (T-t_{j+1})^{-1/2}  \frac{1}{((j+1) \wedge r)!}\prod_{k=0}^{j \wedge (r-1)} (t_{j+1}-t_{j-k}) \\
	= & \sum_{j=0}^{n-2} (T-t_{j+1})^{-1/2}(t_{j+1}-t_{j})  \times \left[ \frac{1}{((j+1) \wedge r)!}\prod_{k=1}^{j \wedge (r-1)} (t_{j+1}-t_{j-k}) \right]
\end{align*}
with the left hand term in the product above being bounded uniformly in $n$ by part 1 Lemma \ref{lem:Kpartition} and the second term being less than $n^{-(r-1)}$ by part 2 of the same lemma.
For the final term in \eqref{eq:Laglipglobalm1}, we use \eqref{eq:liplaststep} to get
\[
	\left[ P^y_{t_{n-1},t_{n}}- Q_{t_{n-1}, t_{n}}^{E^L,y} \right] f(x) \leq C \, (n-r)^{-\gamma/2},
\]
then noting that $(n-r)^{-\gamma/2}= n^{-\gamma/2}\,(1-r/n)^{-\gamma/2} \leq n^{-(l-1)/2}\,(1-r/n)^{-l/2}$ when $ \gamma \in (l-1,l)$,
we finally obtain
\begin{align*}
	\begin{split}
		\left[ P^y_{0,t_{n}}- Q_{t_{n}}^{E^L,y,\Pi^{\gamma,r}_n} \right] f(x) & \leq C  \left( n^{-(l-1)/2} (1-r/n)^{-l/2}+ n^{-(r-1)}  \right) .
	\end{split}
\end{align*}
This proves the result \eqref{eq:errLipLagm1}.

\subsection{Lipschitz terminal condition, $m \geq 2$}

When $f$ is Lipschitz and $m \geq 2$, we split the local error into
\begin{align}
	\label{eq:EerrorLip}
	\left[ P^y_{t_j,t_{j+1}}- Q_{{t_j,t_{j+1}}}^{E,y,\Pi_n} \right] u(t_{j+1},x) &= \left[ Q_{{t_j,t_{j+1}}}^{y,\Pi_n}- Q_{{t_j,t_{j+1}}}^{E,y,\Pi_n} \right] u(t_{j+1},x) \\
	\label{eq:CuberrorLip}	& \quad \quad + \left[ P^{y}_{t_j,t_{j+1}}- Q_{{t_j,t_{j+1}}}^{y,\Pi_n} \right] u(t_{j+1},x).
\end{align}
Equation \eqref{eq:EerrorLip} is the error due to approximating the $ \EE \varphi_i(X_t^{0,y})$ by $E^y_t(\varphi_i)$, and \eqref{eq:CuberrorLip} is a one-step cubature error. For the term in \eqref{eq:EerrorLip}, we note that
\begin{align}
	\label{eq:QsplitE}
	\begin{split}
		\left| \left[ Q^{y,\Pi_n}_{t_j,t_{j+1}}- Q_{{t_j,t_{j+1}}}^{E,y,\Pi_n} \right] u^y(t_{j+1},x) \right|  = & \, \EE_{\QQ^{\Pi_n}} \left| u^y\left(t_{j+1}, X^{t_j,x,y}_{t_{j+1}
		}\right) - u^y\left(t_{j+1}, ^{E}X^{t_j,x,y}_{t_{j+1}}\right) \right| \\
		\leq  & \, \|\nabla u^y(t_{j+1},\cdot)\|_{\infty} \, \EE_{\QQ^{\Pi_n}} \left|  X^{t_j,x,y}_{t_{j+1}} - 
		^{E}X^{t_j,x,y}_{t_{j+1}} \right| .
	\end{split}
\end{align}
Now, using the Lipschitz property of the coefficients, we note that
\begin{align*}
	& \EE_{\QQ^{\Pi_n}} \left|  X^{t_j,x,y}_{t_{j+1}} -  
	^{E}X^{t_j,x,y}_{t_{j+1}} \right|\\
	\leq & \, \sum_{k=1}^{N_{Cub}} \lambda_k \, \sum_{i=0}^d \int_{t_j}^{t_{j+1}} \left|V_i(X^{t_j,x,y}_s (\omega_k),\EE\varphi_i(X_s^{0,y}) - V_i( ^{E}X^{t_j,x,y}_s (\omega_k),E_s(\varphi_i)) \right| \,  d \omega^i_k(t_j,t_{j+1})(s).
\end{align*}
We recall the re-scaled path $\omega^i_k(t_j,t_{j+1})(s) = \sqrt{t_{j+1}-t_j} \, \omega^i_k \textstyle \left(\frac{s-t_j}{t_{j+1}-t_j}\right) $, so that, under the assumption that $\omega_k$ is absolutely continuous,
\[
\sup_{s \in [t_j, t_{j+1}]} \left| \frac{d}{ds} \omega_k(t_j,t_{j+1})(s) \right| \leq \frac{1}{\sqrt{t_{j+1}-t_j}} \sup_{s \in [0,1]} \left| \frac{d}{ds}\omega_k(s) \right|.
\] 
So, there exists a constant $C$ which depends on $ \textstyle \max_{k =1, \ldots, N_{Cub}} \sup_{s \in [0,1]}| \frac{d}{ds}  \omega_k(s) |$ such that
\begin{align*} 
	& \EE_{\QQ^{\Pi_n}} \left|  X^{t_j,x,y}_{t_{j+1}} -  
	^{E}X^{t_j,x,y}_{t_{j+1}} \right|\\
	\leq & \,  C \, \frac{1}{\sqrt{t_{j+1}-t_j}} \sum_{i=0}^d \int_{t_j}^{t_{j+1}}  \EE_{\QQ^{\Pi_n}} \left|X^{t_j,x,y}_s -  ^{E}X^{t_j,x,y}_s \right| + \left|\EE\varphi_i(X_s^{0,y})- E_s(\varphi_i) \right| \, ds.
\end{align*}
Then, using Gronwall's inequality, we have
\begin{align*}
	& \EE_{\QQ^{\Pi_n}} \left|  X^{t_j,x,y}_{t_{j+1}} - 
	^{E}X^{t_j,x,y}_{t_{j+1}} \right| \leq C \, \sqrt{t_{j+1}-t_j} \,\sup_{s \in [t_j, t_{j+1}]} \left|\EE\varphi_i(X_s^{0,y})- E_s(\varphi_i) \right|
\end{align*}
Now, going back to \eqref{eq:QsplitE}, we have
\begin{align}
	\label{eq:cubdiffsplitE}
	\begin{split}
		&\left|\left[ Q^{y,\Pi_n}_{t_j,t_{j+1}}- Q_{{t_j,t_{j+1}}}^{E,y,\Pi_n} \right] u^y(t_{j+1},x) \right| 
		\leq  C \, \sqrt{t_{j+1}-t_j}  \, \sup_{s \in [t_j, t_{j+1}]} \left|\EE\varphi_i(X_s^{0,y})- E_s(\varphi_i) \right|.
	\end{split}
\end{align}
\noindent From this point on the arguments depend on the individual scheme.

\subsubsection{Taylor method}

Using Lemma \ref{lem:polys}, \eqref{eq:cubdiffsplitE} becomes
\begin{align}
	\begin{split}
		\left|\left[ Q^{y,\Pi_n}_{t_j,t_{j+1}}- Q_{{t_j,t_{j+1}}}^{E^T,y,\Pi_n} \right] u^y(t_{j+1},x) \right|  \leq C (t_{j+1} - t_{j})^{1/2} \bigg\{ &(t_{j+1}-t_j)^{q+1} \\
		& +  \sum_{\psi \in \mathcal{H}} \left|\left( Q^{E^T,y,\Pi_n}_{t_j}  - P^{y}_{0,t_j} \right)\psi(y)  \right| \bigg\}.
	\end{split}
\end{align}
Since $\psi \in \C^{\infty}_b(\RR^N;\RR)$, we can use the global error from the last section for smooth terminal conditions contained in \eqref{eq:Taylorsmoothglobal} to obtain
\begin{align}
	\begin{split}
		\left|\left[ Q^{y,\Pi_n}_{t_j,t_{j+1}}- Q_{{t_j,t_{j+1}}}^{E^T,y,\Pi_n} \right] u^y(t_{j+1},x) \right|  \leq C (t_{j+1} - t_{j})^{1/2} & \bigg\{ (t_{j+1}-t_j)^{q+1} \\
		& +\sum_{i=0}^{j} \left[ (t_{i+1}-t_i)^{q+2}  + (t_{i+1}-t_i)^{(l+1)/2}\right] \bigg\}.
	\end{split}
\end{align}
Now, we simply use that $t_{j+1}-t_j \leq C/n$ to get:
\begin{align}
	\begin{split}
		\left|\left[ Q^{y,\Pi_n}_{t_j,t_{j+1}}- Q_{{t_j,t_{j+1}}}^{E^T,y,\Pi_n} \right] u^y(t_{j+1},x) \right| 
		& \leq   C n^{-1/2} \left\{ n^{-(q+1)} +  n \left\{ n^{-(q+2)}  + n^{-(l+1)/2} \right\} \right\} \\
		& \leq C \left( n^{-(q+3/2)} + n^{-l/2} \right)
	\end{split}
\end{align}
Combining with the local cubature errors and summing up, we get the global error:
\[
\left[ P^y_{0,t_{n}}- Q_{{t_{n}}}^{E^T,y,\Pi_n} \right] f(x)  \leq C \left(	n^{-(q+1/2)} + n^{-(l-2)/2} \right).
\]

\subsubsection{Lagrange interpolation method}

Now, we only consider the modified Kusuoka partition $\Pi_n^{\gamma,r}$. Using Lemma \ref{lem:Kpartition} part 2 and Lemma \ref{lem:polys}, \eqref{eq:cubdiffsplitE} becomes
\begin{align}
	\begin{split}
		\left|\left[ Q^{y}_{t_j,t_{j+1}}- Q_{{t_j,t_{j+1}}}^{E^L,y} \right] u^y(t_{j+1},x) \right| 
		\leq   C (t_{j+1} - t_{j})^{1/2} \left\{ n^{-r} + \sum_{k=0}^{j \wedge (r-1)} \left| \left( Q^{E^L,y,\Pi_n}_{t_{j-k}}  - P^{y}_{0,t_{j-k}} \right) \varphi_i  \right| \right\}.
	\end{split}
\end{align}
Using the local error for functions $\varphi_i \in \C^{\infty}_b(\RR^N;\RR)$ contained in \eqref{eq:Lagglobalsmooth}, we get
\begin{align}
	\begin{split}
		\left|\left[ Q^{y}_{t_j,t_{j+1}}- Q_{{t_j,t_{j+1}}}^{E^L,y} \right] u^y(t_{j+1},x) \right| 
		\leq   C (t_{j+1} - t_{j})^{1/2} \left\{ n^{-r} + r \sum_{i=0}^{j} \left\{ n^{-r}  + (t_{i+1}-t_i)^{(l+1)/2} \right\} \right\}.
	\end{split}
\end{align}
Now, we simply use that $t_{j+1}-t_j \leq C/n$ to get:
\begin{align}
	\begin{split}
		\left|\left[ Q^{y}_{t_j,t_{j+1}}- Q_{{t_j,t_{j+1}}}^{E^L,y} \right] u^y(t_{j+1},x) \right| 
		& \leq   C n^{-1/2} \left\{ n^{-r} + r n \left\{ n^{-r}  + n^{-(l+1)/2} (1-r/n)^{-l/2} \right\} \right\} \\
		& \leq C \left( n^{1/2-r} + n^{-l/2}(1-r/n)^{-l/2} \right)
	\end{split}
\end{align}
Combining with the local cubature errors and summing up, we get the global error:
\[
\left[ P^y_{0,t_{n}}- Q_{{t_{n}}}^{E^L,y,\Pi^{\gamma,r}_n} \right] f(x)  \leq C \left(	n^{3/2-r} + n^{-(l-2)/2}(1-r/n)^{-l/2} \right).
\]

\section{Numerical Examples}
\label{sec:NumEx}

\subsection{Example 1}

In this section, we implement and compare both algorithms.
%
%
%
We consider the following example with dimensions $N=d=1$:
\[
X_t^{0,x} = x + \int_0^t \EE \left[X_s^{0,x}\right] \, ds + B_t,
\]
which has the explicit solution
\[
X_t^x= x e^t + B_t.
\]
In this case, $ \EE X_t^{0,x} = x e^t$ so the Taylor approximation of order $q$ is easy to compute:
\[
	\mathcal{T}^q_t \left( \EE X_t^{0,x} \right) = \sum_{k=0}^q \frac{x}{k!} \, t^k.
\]
We choose the Lipschitz terminal function $f(x)= x^+:=\max\{x,0\}$ and, by integrating the Gaussian density, we can compute
\[
\EE (X_t^{0,x})^+ = \sqrt{t} \phi\left( \frac{x e^t}{\sqrt{t}} \right) + xe^t \left(1-\Phi\left(-\frac{x e^t}{\sqrt{t}}\right)\right),
\]
where $\phi$ and $\Phi$ are the density and cumulative distribution function, respectively, of a standard Gaussian random variable. We use the cubature formula of degree 5 contained in Lyons \& Victoir \cite{lyons2004cubature}. We use a fourth order adaptive Runge-Kutta scheme to solve the ODEs. We choose our parameters in order to achieve the optimal rate of convergence as given by Theorem \ref{th:MKVCub}. Since the coefficients are uniformly elliptic, expect to be able to achieve order 2 convergence with a cubature formula of degree 5. So, we only need to choose $q \geq 1 $ and $\gamma \in (4,5)$ to achieve quadratic convergence in the Taylor method, and $r \geq 3$ in the Lagrange interpolation method. We choose the parameters $(x,T,\gamma,q,r)=(0.5,10,4.5,2,3)$
and the results are presented in Figure \ref{fig:example1}. We fit a line to the last four points on the log-log error plot and calculate its gradient as an estimate of the rate of convergence.
\begin{figure}[h]
	\centering
	\includegraphics[width=0.8\textwidth]{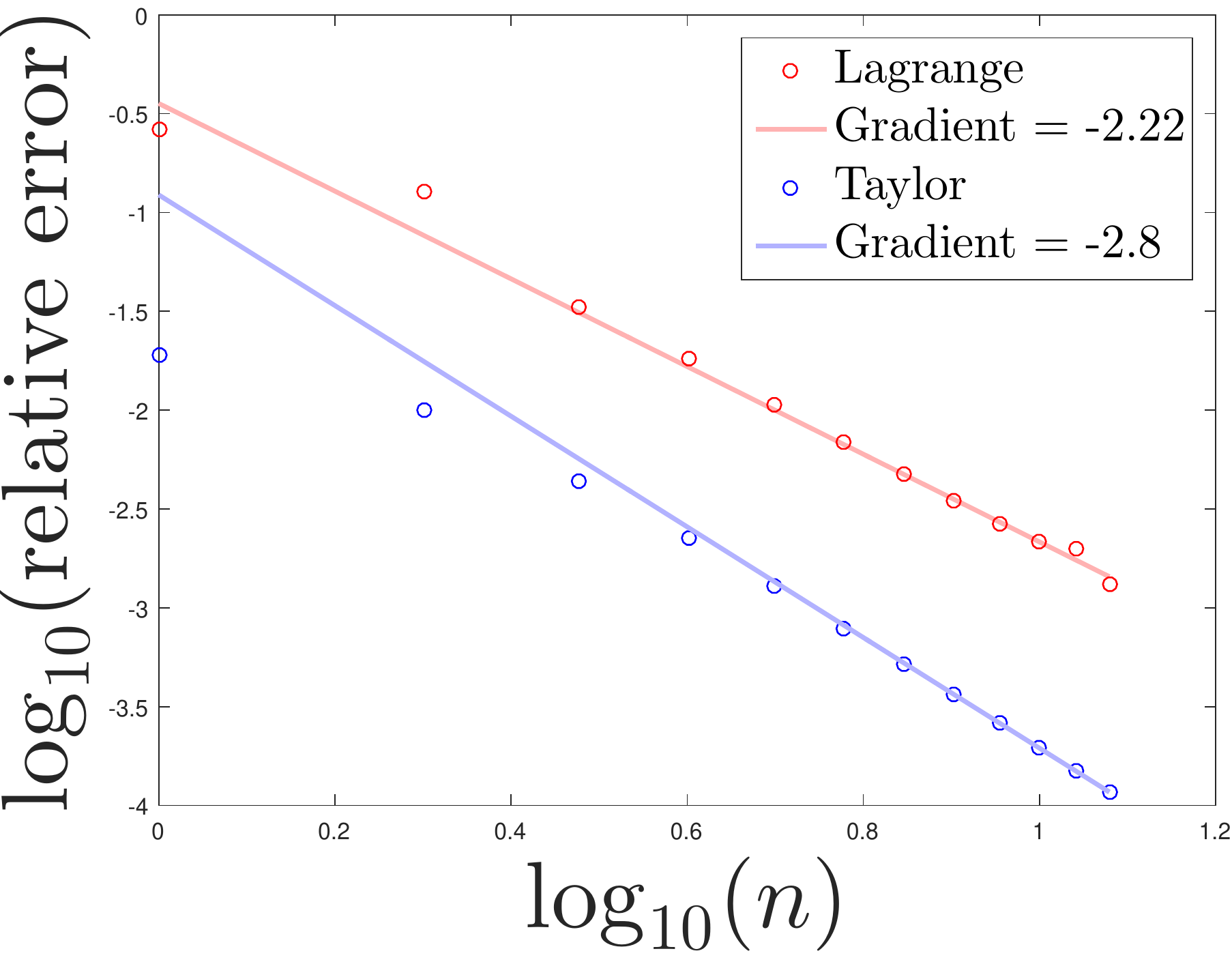}
	\captionsetup{width=0.8\textwidth}
	\caption{log-log error plot comparison between the Lagrange interpolation and Taylor methods for Example 1. The gradient of each solid line is given by a linear regression on the last 5 points.}
	\label{fig:example1}
\end{figure}
\noindent We see that both methods achieve the expected quadratic convergence rate. In this simple example, the convergence is quite smooth and the Taylor method performs better than the Lagrange interpolation method.

\subsection{Example 2}

We implement an example where the coefficients are not uniformly elliptic and $N=d=2$. We write $X_t^{0,x} = \left(X_t^{1} ,X_t^{2}\right)$ to lighten notation slightly. The example we consider is
\[
\begin{pmatrix}
	X_t^{1} \\ X_t^{2}
\end{pmatrix}
=
 \begin{pmatrix}
 x_1 \\ x_2
 \end{pmatrix}
  + \int_0^t  \begin{pmatrix} \left[2 + \sin \left(  \EE \, X_s^{2}\right) \right]
   \\  X_t^{1}
  \end{pmatrix} \circ dB^1_s
   + \int_0^t \begin{pmatrix}
   X_t^{2} \\ X_t^{1}
   \end{pmatrix} \circ dB^2_s,
\]
where the coefficients are 
\[
V_0 \equiv 0, \quad V_1(x_1,x_2,x') = \begin{pmatrix}
2 + \sin(x') \\ x_1
\end{pmatrix}, \quad 
V_2(x_1,x_2,x')= \begin{pmatrix}
x_2 \\ x_1
\end{pmatrix}, \quad \varphi_1(x_1,x_2)=x_2, 
\]
for all $(x_1,x_2,x')\in \RR ^3$.
We note that at $x_1=0$ the coefficients degenerate. Second, we note that
\[
V_{[(1,2)]}(x_1,x_2,x') = \begin{pmatrix}
x_1 \\ 2 + \sin(x') - x_1
\end{pmatrix}.
\]
Since $x_1$ and $2 + \sin(x') - x_1$ cannot both be zero at the same time, we see that $V_1, V_2$ and $V_{[(1,2)]}$ span $\RR^2$.
The coefficients therefore satisfy Assumption \ref{ass:MKVcub} (A1), the uniform strong H\"ormander condition, for $m=2$.
For $m=2$, with a cubature formula of degree 5, we expect to achieve a convergence rate of $3/2$ according to Theorem \ref{th:MKVCub}. To do so, we have to choose $\gamma\in (4,5)$ and $r >7/2$.
We choose the parameters $(x_1,x_2,T,\gamma,r)=(1,0.5,1,4.5,4)$, with the terminal function $f(x)=x^+$. We implement the cubature formula of degree 5 in dimension $d=2$ from Lyons \& Victoir \cite{lyons2004cubature}. In this case, the cubature measure is supported on $N_{Cub}=13$ paths. We could not find an explicit solution, so we compare the cubature approximation to a Monte Carlo approximation with Euler-Maruyama discretisation. The results are presented in Figure \ref{fig:example2}.
\begin{figure}[h]
	\centering
	\includegraphics[width=0.75\textwidth]{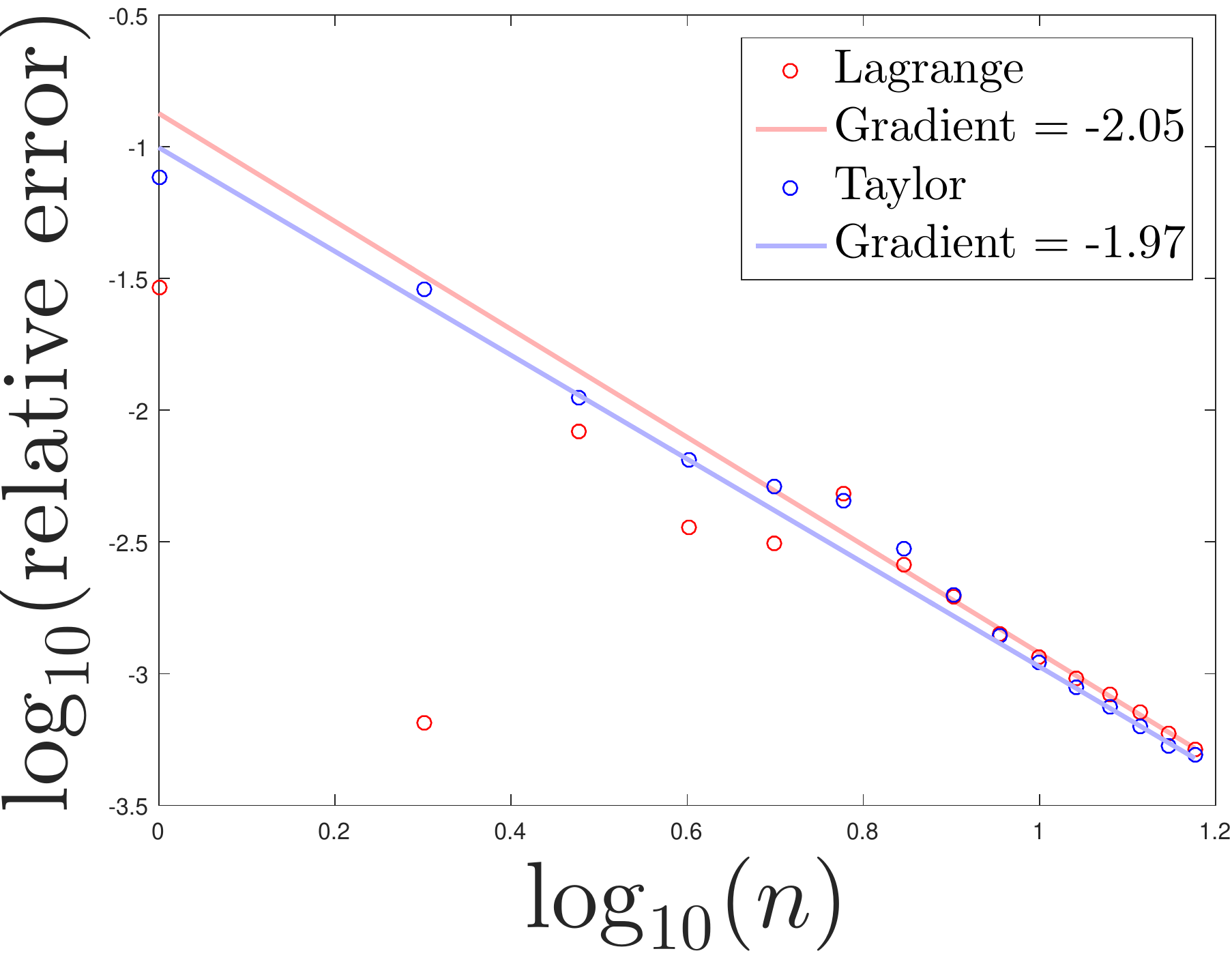}
	\captionsetup{width=0.8\textwidth}
	\caption{log-log error plot comparison between the Lagrange interpolation and Taylor methods for Example 2. The gradient of each solid line is given by a linear regression on the last 5 points.}
	\label{fig:example2}
\end{figure}
In this example, the convergence is not as smooth as Example 1, but the $\log$-$\log$ error plot looks approximately linear after 7 steps. After this, the performance of each algorithm is remarkably similar. Empirically we observe second order convergence, whereas Theorem \ref{th:MKVCub} predicts a rate of $3/2$.

\appendix

\section{Derivative estimates for time-inhomogeneous parabolic PDEs}
\label{sec:timeinhomintro}
In this section, we obtain estimates on the derivatives of the solution of the linear parabolic  partial differential equation (PDE)
\begin{align}
\label{eq:PDE}
\begin{split}
\left(\partial_t  + \mathcal{L}_t \right) u (t,x)=0 &, \:\:\:\:\:\: (t,x) \in [0,T) \times \RR^N  \\
u(T,x) =f(x) &, \:\:\:\:\:\: x \in \RR^N 
\end{split}
\end{align}
where $f$ is either Lipschitz or continuous and bounded, and $\mathcal{L}_t$ is the \emph{time-inhomogeneous} differential operator, written in H\"{o}rmander form,
\[
\mathcal{L}_t = W_0(t) + \frac{1}{2} \sum_{i=1}^d W_i(t)^2.
\]
The connection between parabolic PDEs and stochastic differential equations has been well-studied. Under various types of conditions on the vector fields $W_0, \ldots, W_d$ and terminal condition $f$, the solution to \eqref{eq:PDE} is given by $u(t,x)= \EE[f(X_T^{t,x})]$, where 
$(X^{t,x}_s)_{s \in [t,T]}$ solves the following Stratonovich SDE driven by a Brownian motion $B=(B^1, \ldots, B^d)$, with the convention $B^0_t=t$,
\begin{equation}
\label{eq:SDE}
X_s^{t,x} =  x + \sum_{i=0}^d \int_t^s W_i(u,X_u^{t,x}) \circ dB^i_u .
\end{equation}
In the homogeneous case, Kusuoka \& Stroock \cite{KusStrII} under a uniform H\"{o}rmander condition and subsequently Kusuoka \cite{Kusuoka3} under the weaker UFG condition, establish sharp estimates on the derivatives of the solution of \eqref{eq:PDE}. Crisan \& Delarue \cite{crisandelarue} extend this analysis to semi-linear equations. To our knowledge, these results have not been obtained without using Malliavin Calculus.

When the vector fields defining the time-inhomogeneous SDE are smooth in time and space, we can consider the space-time process $\left(t,X_t^{0,x}\right)_{t \in [0,T]}$ on $\RR^{N+1}$ and adapt existing results in the literature. There are three main works we draw on: we first show how to adapt the results of Kusuoka \cite{Kusuoka3} to derive gradient bounds in the directions of the vector fields except $W_0$; we then adapt an argument from Crisan \& Delarue \cite{crisandelarue} to prove that the time-inhomogeneous semigroup is a generalised classical solution to a parabolic PDE, and finally using this PDE as a tool, we adapt a result from Crisan, Manolarakis \& Nee \cite{crisan2010cubature} to derive gradient bounds in the direction of all the vector fields, including $W_0$.

We introduce the first assumption which we make throughout this section.
\begin{assumption}
	\label{ass:smoothbounded}
	$W_0, \ldots, W_d \in \C^{\infty}_b([0,T]\times \RR^N;\RR^N)$.
\end{assumption}
Under this assumption, \eqref{eq:SDE} has a unique strong solution. Now, let us define the space-time process $\left(\widetilde{X}^{(t,x)}_s\right)_{s \in [t,T]} :=\left(s, X^{t,x}_s\right)_{s \in[t,T]}$, taking values in $\RR^{N+1}$, which solves the equation
\begin{equation}
\label{eq:spacetime}
\widetilde{X}^{(t,x)}_s= \begin{pmatrix}t \\ x \end{pmatrix} + \int_t^s \begin{pmatrix}1 \\ W_0 \left(\widetilde{X}^{(t,x)}_u \right) \end{pmatrix} \, du + \sum_{i=1}^d \int_t^s \begin{pmatrix}0 \\ W_i\left(\widetilde{X}^{(t,x)}_u \right) \end{pmatrix} \circ dB^i_u .
\end{equation}
Defining $\widetilde{x}:=(t,x) \in \RR^{N+1}$ and $\widetilde{W}_0, \ldots, \widetilde{W}_d: \RR^{N+1} \to \RR^{N+1}$ as follows:
\begin{align*}
\forall \widetilde{x} \in \RR^{N+1}: \quad 	\widetilde{W}_0(\widetilde{x}):= \begin{pmatrix}1 \\ W_0(\widetilde{x}) \end{pmatrix}, \quad \widetilde{W}_i(\widetilde{x}):= \begin{pmatrix}0 \\ W_i(\widetilde{x}) \end{pmatrix} \: \text{ for } i=1, \ldots, d,
\end{align*}
we can re-write \eqref{eq:spacetime} more compactly as
\begin{equation}
\label{eq:spacetime2}
\widetilde{X}^{\widetilde{x}}_s= \widetilde{x} +  \sum_{i=0}^d \int_t^s \widetilde{W}_i\left(\widetilde{X}^{\widetilde{x}}_u \right) \circ dB^i_u .
\end{equation}
As we shall see, by working under the relaxed $\widetilde{UFG}$ condition (see Assumption \ref{ass:UFG}), the solution of the PDE \eqref{eq:PDE} is not necessarily differentiable in each co-ordinate direction in $\RR^N$. The solution of \eqref{eq:PDE} remains differentiable in certain directions, determined by the vector fields $\widetilde{W}_0, \ldots, \widetilde{W}_d$. We now explain what we mean by such a directional derivative.

We can identify vector fields, $U:\RR^{m} \to \RR^{m}$ with differential operators acting on sufficiently smooth functions $\varphi :\RR^{m} \to \RR$ by
\begin{equation}
\label{eq:dirderivsmth}
\forall y \in \RR^m: \quad	U \varphi (y) :=  \nabla \varphi(y) \, U(y) = \sum_{i=1}^{m} U^i(y) \, \partial_{y_i}  \varphi(y)
\end{equation}
We can define a directional derivative of $\varphi$ in the direction $U$, even when $\partial_{y_j} \varphi$ does not exist classically for all $j=1, \ldots, m$. Let $w_s(y)$ be the solution to the ODE
\begin{align}
\label{eq:dirODE}
\begin{split}
&	\frac{dw_s(y)}{ds} = U(w_s(y)), \quad s \geq 0\\
&  w_0(y)=y.
\end{split}
\end{align}
We say that $\varphi$ is differentiable in the direction $U$ if the function $s \mapsto \varphi(w_s(y))$ is differentiable at 0. Then, we denote
\[
U \varphi(y)= \left. \frac{d}{ds} \varphi ( w_s(y)) \right|_{s=0},
\]
which coincides with \eqref{eq:dirderivsmth} when $\varphi \in \C^1(\RR^m;\RR)$. In fact, we will see that the semigroup associated to equation is differentiable in directions determined by commutators of the vector fields. The Lie bracket, or commutator, between two vector fields $U$ and $W$ is then defined the differential operator
\[
[U,W]\varphi := U (W \varphi) - W (U \varphi),
\]
which can be identified with the vector field
\[
[U,W](y) = \partial W (y) U(y) - \partial U(y) W(y),
\]
where $\partial W(y):= (\partial_{y_j} W^i(y))_{1 \leq i,j \leq m}$ is the Jacobian matrix of $W$ and similarly for $\partial U$.

Since we have assumed the vector fields $\widetilde{W}_0, \ldots, \widetilde{W}_d$ to be smooth, we can repeatedly take commutators of them.
Recall the notation $\A$ for multi-indices on $\{0, \ldots, d\}$ from section \ref{sec:introCub}. We define $\widetilde{W}_{[\alpha]}$, for $\alpha \in \A$ inductively by forming Lie brackets on $\RR^{N+1}$:
\begin{align*}
\widetilde{W}_{[i]} := \widetilde{W}_i , \:\:\:\:\: \widetilde{W}_{[\alpha*i]} := [\widetilde{W}_{[\alpha]}, W_i]  \:\:\: \text{ for } \: i=0 \ldots, d, \:\: \alpha \in \A.
\end{align*}
We note that for all $\alpha \in \A_1(m)$ (i.e. for $\alpha\neq(0)$) the first component of the vector field $\widetilde{W}_{[\alpha]}$ is zero.
So for $\alpha \in \A_1(m)$, a derivative in the direction  $\widetilde{W}_{[\alpha]}$ of a function $\RR^{N+1} \ni (t,x) \mapsto \phi (t,x) \in \RR$ only acts in the $x$ variable. We can therefore write $\{\widetilde{W}_{[\alpha]}(t): \alpha \in \A_1(m)\}$ and think of these as differential operators parametrised by $t$ and acting in the $x$ variable. Only the vector field $\\widetilde{W}_0$ acts in the $t$-direction.  

With these concepts in mind, we can now introduce the second assumption we make on the vector fields.
\begin{assumption}
	\label{ass:UFG}
	$\widetilde{UFG}$($m$) condition: there exists a positive integer $m$ such that, for all $\alpha \in \A$ with $\|\alpha\|>m$, there exist  $\varphi_{\alpha, \beta} \in \C_b^{\infty}([0,T]\times \RR^N;\RR)$ with
	\[
	\widetilde{W}_{[\alpha]}(t,x) = \sum_{\beta \in \A_1(m)} \varphi_{\alpha, \beta}(t,x) \, \widetilde{W}_{[\beta]}(t,x).
	\]
\end{assumption}

\subsection{Kusuoka-Stroock processes}
\label{sec:introKSP}

This class of process was introduced by Kusuoka \& Stroock \cite{KusStrIII}.
They will appear as Malliavin weights in our integration by parts formulas. The definition and properties we give here record the regularity and growth of these processes with respect to different parameters. The results allow one to develop integration by parts formulas in a systematic and transparent way, which automatically leads to nice derivative estimates.

\begin{definition}[Kusuoka-Stroock processes] \label{def:localKS}
	Let $E$ be a separable Hilbert space and let $r \in \RR$, $M \in \mathbb{N}$. We
	denote by $\K_{r}(t,E,M)$ the set of functions: $\Phi_t : (t,T] \times
	\RR^N  \to \mathbb{D}^{M,\infty}(E)$ satisfying the
	following:
	\begin{enumerate}
		\item For all $s \in (t,T]$,  the map $\RR^N \ni x \mapsto \Phi_t(s,x) \in L^p(\Omega)$ is $M$-times continuously differentiable for all $p \geq 1$.
		\item For any $p \geq 1$, any multi-index $\alpha$ on $\{1, \ldots, N\}$ and $m \in \NN$ with $|\alpha| +m \leq M$, we have 
		\begin{align}\label{ksineqinx}
		\begin{split}
		\sup_{x \in \RR^N}
		\sup_{s \in (t,T]} (s-t)^{-r/2} & \left\| \partial^{\alpha}_x \Phi_t(s,x) \right\|_{ \DD^{m,p}(E) }   <\infty.
		\end{split}
		\end{align}
	\end{enumerate}
\end{definition}
\begin{remark}
	\begin{enumerate}
		\item The number $M$ denotes how many times the Kusuoka-Stroock function can be differentiated and $r$ measures the growth in $(s-t)$.
		\item This definition is slightly different to that in \cite{KusStrIII}: here our processes are defined on $(t,T]$ instead of $(0,T]$ and we require continuity in $L^p(\Omega)$ rather than almost surely.
	\end{enumerate}
\end{remark}

We record here some properties which help when building Malliavin weights later.
\begin{lemma}
	\label{lem:KSprops}
	\begin{enumerate}
		\item[a.] If $\Phi_t \in \K^q_r(t,\RR,M)$ is $\mathbb{F}$-adapted, and we define \newline $\textstyle \Psi^i_t(s,x):= \int_t^s \Phi_t(u,x) \, dB^i_u$ for $i \in \{1, \ldots, d\}$ and $\textstyle \Gamma(s,x):= \int_t^s \Phi_t(u,x) \, du$, then $\Psi_t^i \in \K^q_{r+1}(t,\RR,M)$ and $\Gamma_t \in \K^q_{r+2}(t,\RR,M)$.
		\item[b.] If $\Phi_{t,i} \in \K^{q_i}_{r_i}(t,\RR,M_i), i=1, \ldots, n$ then $ \textstyle \sum_{i=1}^n \Phi_{t,i} \in \K^{\max_i q_i}_{\min_i r_i}(t,\RR, \min_i M_i)$ and \newline $ \textstyle\prod_{i=1}^n \Phi_{t,i} \in \K^{q_1 + \ldots + q_n}_{r_1 + \ldots + r_N}(t,\RR, \min_i M_i )$.
	\end{enumerate}
\end{lemma}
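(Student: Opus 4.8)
The plan is to follow the now-standard bookkeeping of Kusuoka \& Stroock \cite{KusStrIII}, adapted to the minor changes recorded in the remark after Definition \ref{def:localKS} (the processes live on $(t,T]$ and are continuous into $L^p(\Omega)$ in $x$), by combining four ingredients: differentiation under the integral sign, Minkowski's integral inequality, the Burkholder--Davis--Gundy (BDG) inequality, and the commutation rules for the Malliavin derivative operator $D$ with Lebesgue and It\^o integration together with the Leibniz rule for $D$ on products. I would treat part a.\ first and part b.\ second, and throughout I would check the defining estimate \eqref{ksineqinx} (i.e.\ the bound $\left\| \partial^\alpha_x \Phi_t(s,x) \right\|_{\DD^{m,p}} \le C\,(s-t)^{r/2}$) for every $p \ge 1$ and every multi-index $\alpha$ with $|\alpha|+m \le M$, uniformly in $x$.

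\textbf{Part a.} For $\Gamma_t(s,x) = \int_t^s \Phi_t(u,x)\,du$ the argument is direct: differentiating under the integral gives $\partial^\alpha_x \Gamma_t(s,x) = \int_t^s \partial^\alpha_x \Phi_t(u,x)\,du$, and Minkowski's integral inequality in the Banach space $\DD^{m,p}(\RR)$ yields
\[
\left\| \partial^\alpha_x \Gamma_t(s,x) \right\|_{\DD^{m,p}} \le \int_t^s \left\| \partial^\alpha_x \Phi_t(u,x) \right\|_{\DD^{m,p}}\,du \le C \int_t^s (u-t)^{r/2}\,du = C'\,(s-t)^{(r+2)/2},
\]
which is precisely \eqref{ksineqinx} with growth index $r+2$, while the number of available $x$-derivatives and the structural index $q$ are untouched. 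For $\Psi^i_t(s,x) = \int_t^s \Phi_t(u,x)\,dB^i_u$ (well defined as an It\^o integral since $\Phi_t$ is $\mathbb{F}$-adapted), I would differentiate under the integral in $x$ and then compute the iterated Malliavin derivatives using the relation
\[
D_\theta \Psi^i_t(s,x) = \Phi_t(\theta,x)\,\mathbbm{1}_{[t,s]}(\theta)\, e_i + \int_t^s D_\theta \Phi_t(u,x)\,dB^i_u,
\]
and its analogue for $D^{(j)}$, $j \le m$ (obtained by iterating and symmetrising). Each resulting term is either a stochastic integral, to which BDG (followed by Minkowski) applies and reduces the $L^p(\Omega)$-norm to $\big(\int_t^s \|\cdot\|_{L^p}^2\,du\big)^{1/2} \le C\big(\int_t^s (u-t)^r\,du\big)^{1/2}= C'(s-t)^{(r+1)/2}$, or a ``boundary'' term of the form (a symmetrisation of) $D^{(j-1)}\Phi_t(\theta_k,x)\,\mathbbm{1}_{[t,s]}(\theta_k)$, whose norm, after integrating the Malliavin variables over $[t,s]$ in the $L^2$-sense, is again controlled by $\big(\int_t^s (u-t)^r\,du\big)^{1/2}$. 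Summing the finitely many contributions gives \eqref{ksineqinx} with growth index $r+1$, establishing $\Psi^i_t \in \K^q_{r+1}(t,\RR,M)$.

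\textbf{Part b.} The statement on sums follows from linearity: $\sum_i \Phi_{t,i}$ is $x$-differentiable at least $\min_i M_i$ times; the triangle inequality in $\DD^{m,p}$ gives $\big\|\sum_i \Phi_{t,i}(s,x)\big\|_{\DD^{m,p}} \le \sum_i C_i\,(s-t)^{r_i/2} \le C\,(s-t)^{(\min_i r_i)/2}$ on the bounded interval $[t,T]$; and the bound on the structural index is obtained the same way, yielding $\max_i q_i$. For products I would iterate the Leibniz rule, both for $\partial^\alpha_x$ and for the Malliavin derivative $D$, applied to $\prod_i \Phi_{t,i}$, producing a finite sum of products of (derivatives of) the factors; H\"older's inequality splits the $L^p(\Omega)$-norm of each such product into a product of $L^{p_i}(\Omega)$-norms of the individual factors, and applying the hypothesis to each factor and multiplying the estimates gives the power $\prod_i (s-t)^{r_i/2} = (s-t)^{(\sum_i r_i)/2}$, with $\min_i M_i$ available derivatives and structural index $\sum_i q_i$.

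\textbf{Main obstacle.} The only genuinely delicate point is the bookkeeping in part a.\ for $\Psi^i_t$: commuting the higher Malliavin derivatives $D^{(j)}$, $j \le m$, through the It\^o integral produces, besides the ``main'' stochastic-integral term, several lower-order boundary terms, and one must verify that every single one of them still carries the full power $(s-t)^{(r+1)/2}$ rather than only $(s-t)^{r/2}$ --- that is, that the half-power gained from one integration over $[t,s]$ survives all the commutations --- while keeping the combinatorial structure under control. Once this is checked, the remaining steps are routine applications of BDG, Minkowski and H\"older.
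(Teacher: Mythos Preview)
Your proposal is correct and follows the standard Kusuoka--Stroock bookkeeping; indeed the paper does not give a self-contained argument at all but simply refers to Lemma 75 of \cite{crisan2010cubature}, whose proof is precisely the combination of BDG/Minkowski for part a.\ and Leibniz/H\"older for part b.\ that you outline. Your identification of the only delicate point---tracking the $(s-t)^{1/2}$ gain through the boundary terms produced when $D^{(j)}$ is commuted past the It\^o integral---is accurate, and the resolution you sketch is the right one.
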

\begin{proof}
	The proof is essentially the same as the proof of Lemma 75 in \cite{crisan2010cubature}.
\end{proof}

\subsection{Integration by parts \& derivative bounds}
\label{sec:GBs}

We are now in a position to prove the main integration by parts result.

\begin{theorem}
	\label{th:IBPandGB}
	Assume that $\widetilde{UFG}(m)$ holds and fix $s \in (t,T]$. Then, for any $\alpha_1, \ldots, \alpha_n \in \A_1(m)$, there exist $\Phi^1_{t,\alpha_1, \ldots, \alpha_n} \in \K_0(t,\RR)$ and $\Phi^2_{t,\alpha_1, \ldots, \alpha_n} \in \K_0(t,\RR^N)$ such that for $f \in \C^{\infty}_b(\RR^N;\RR)$,
	\begin{equation}
	\label{eq:bdedIBP}
	\widetilde{W}_{[\alpha_1]}(t) \cdots \widetilde{W}_{[\alpha_n]}(t) \left( P_{t,s}f \right)(x) = (s-t)^{- \frac{\| \alpha_1 \|+ \ldots, + \| \alpha_{n} \|}{2}} \EE \left[ f(X_s^{t,x}) \,  \Phi^1_{t,\alpha_1, \ldots, \alpha_n}(s,x) \right],
	\end{equation}
	and
	\begin{equation}
	\label{eq:LipIBP}
	\widetilde{W}_{[\alpha_1]}(t) \cdots \widetilde{W}_{[\alpha_n]}(t) \left( P_{t,s}f \right)(x) = (s-t)^{- \frac{\| \alpha_1 \|+ \ldots, + \| \alpha_{n-1} \|}{2}} \EE \left[ \nabla f(X_s^{t,x}) \,  \Phi^2_{t,\alpha_1, \ldots, \alpha_n}(s,x) \right].
	\end{equation}
	Moreover, for $f$ continuous and bounded or Lipschitz, $ P_{t,s}f(x)$ is differentiable in the directions $\{\widetilde{W}_{[\alpha]}(t): \alpha \in \A_1(m)\}$ with 
	\begin{equation}
	\label{eq:GB1}
	\sup_{x \in \RR^N} \left|	\widetilde{W}_{[\alpha_1]}(t) \cdots \widetilde{W}_{[\alpha_n]}(t) \left( P_{t,s}f \right)(x) \right | \leq C \, \|f \|_{\infty} \, (s-t)^{\frac{-(| \alpha_1 |+ \ldots, + | \alpha_n |)}{2}} ,
	\end{equation}
	and
	\begin{equation}
	\label{eq:GB2}
	\sup_{x \in \RR^N} \left|	\widetilde{W}_{[\alpha_1]}(t) \cdots \widetilde{W}_{[\alpha_n]}(t) \left( P_{t,s}f \right)(x) \right | \leq C \, \|f \|_{\text{Lip}} \, (s-t)^{\frac{1-(| \alpha_1 |+ \ldots, + | \alpha_n |)}{2}} ,
	\end{equation}
	in each case, respectively.
\end{theorem}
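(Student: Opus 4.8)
The plan is to lift to the space-time process $\widetilde X$ of \eqref{eq:spacetime2} and reduce the statement to the (time-homogeneous) Malliavin integration-by-parts machinery of Kusuoka \cite{Kusuoka3} and Crisan--Manolarakis--Nee \cite{crisan2010cubature} under the $\widetilde{UFG}$ condition, the only new point being a piece of bookkeeping: $\widetilde W_0$ is unbounded (its first component is the constant $1$), but Assumption \ref{ass:smoothbounded} guarantees all \emph{derivatives} of each $\widetilde W_i$ are bounded, and only derivatives of the coefficients enter the Malliavin estimates and the iterated-bracket computations, so the theory applies after the lift. Concretely, set $\widetilde f(t,x):=f(x)$; since \eqref{eq:spacetime2} is autonomous, $P_{t,s}f(x)=\widetilde P_{s-t}\widetilde f(t,x)$ where $\widetilde P_\tau$ is the semigroup of $\widetilde X$, and Assumption \ref{ass:UFG} is precisely the UFG condition for $\{\widetilde W_0,\ldots,\widetilde W_d\}$. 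As already observed in the text, for $\alpha\in\A_1(m)$ the operator $\widetilde W_{[\alpha]}(t)$ contains no $\partial_t$, so differentiating $P_{t,s}f$ along these directions is the same as differentiating $\widetilde P_{s-t}\widetilde f$ along them.

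First I would assemble the Malliavin objects for the flow $\widetilde x\mapsto\widetilde X^{\widetilde x}_s$: the first variation process $\widetilde J_s:=\partial_{\widetilde x}\widetilde X^{\widetilde x}_s$ and its inverse $\widetilde K_s$, the Malliavin derivative of $\widetilde X^{\widetilde x}_s$, and, for $\beta\in\A_1(m)$, the control process $u\mapsto\widetilde K_u\widetilde W_{[\beta]}(\widetilde X^{\widetilde x}_u)$. Using Lemma \ref{lem:KSprops} and the closure of Kusuoka--Stroock processes under sums, products and iterated time/Brownian integrals, one verifies $\widetilde J,\widetilde K$ and these control processes lie in $\K_0$. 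The heart of the proof is the non-degeneracy of the reduced Malliavin matrix
\[
C_s(\widetilde x):=\Big(\textstyle\int_t^s\big\langle\widetilde K_u\widetilde W_{[\alpha]}(\widetilde X^{\widetilde x}_u),\,\widetilde K_u\widetilde W_{[\beta]}(\widetilde X^{\widetilde x}_u)\big\rangle\,du\Big)_{\alpha,\beta\in\A_1(m)} ,
\]
namely that under Assumption \ref{ass:UFG}, after restricting to the appropriate range and rescaling by powers of $(s-t)$, the inverse of $C_s(\widetilde x)$ has finite $L^p(\Omega)$-moments for all $p$ with the Kusuoka--Stroock growth. This is where the $\widetilde{UFG}(m)$ structure is used in an essential way, and where one must check that the $\partial_t$-terms generated by brackets with $\widetilde W_0$ (e.g. the term $\partial_t W_i$ appearing in $[\widetilde W_i,\widetilde W_0]$) do not degrade the estimate; the fact that these are themselves bounded smooth vector fields (Assumption \ref{ass:smoothbounded}) is exactly what lets Kusuoka's argument from \cite{Kusuoka3} run verbatim on $\RR^{N+1}$. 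I expect this non-degeneracy estimate to be the main obstacle.

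Granting it, I would establish \eqref{eq:bdedIBP} and \eqref{eq:LipIBP} by induction on $n$. For $n=1$: by the chain rule $\widetilde W_{[\alpha_1]}(t)(P_{t,s}f)(x)=\EE[\nabla f(X^{t,x}_s)\,\Xi_{\alpha_1}]$, where $\Xi_{\alpha_1}$ is the directional derivative of the flow, which is written through $\widetilde J$, $\widetilde K$ and $\widetilde W_{[\alpha_1]}(\widetilde X)$; retaining the $\nabla f$ already gives \eqref{eq:LipIBP} with an order-zero weight, and then one Malliavin (Skorokhod) integration by parts against the Brownian noise, using the rescaled inverse of $C_s$, turns $\nabla f$ into $f$ and produces $\Phi^1\in\K_0$ with the stated power of $(s-t)$. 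Any $\widetilde W_{[\alpha]}$ with $\|\alpha\|>m$ is first re-expressed through Assumption \ref{ass:UFG} as a bounded-smooth combination of $\{\widetilde W_{[\beta]}:\beta\in\A_1(m)\}$, so no generality is lost. For the inductive step I would apply $\widetilde W_{[\alpha_1]}(t)$ to \eqref{eq:bdedIBP} written for $\alpha_2,\ldots,\alpha_n$: the derivative either falls on $f(X^{t,x}_s)$, handled by one more integration by parts contributing the next power of $(s-t)$ and a fresh order-zero weight, or falls on the weight $\Phi^1_{t,\alpha_2,\ldots,\alpha_n}$, which a derivative along a direction in $\A_1(m)$ keeps in $\K_0$ by the same closure properties; collecting the finitely many terms and multiplying the $(s-t)$-powers gives \eqref{eq:bdedIBP}, and the identical computation keeping one $\nabla f$ gives \eqref{eq:LipIBP}.

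Finally, the gradient bounds \eqref{eq:GB1} and \eqref{eq:GB2} follow on taking absolute values in \eqref{eq:bdedIBP}, \eqref{eq:LipIBP}, applying Hölder's inequality, and using that $\sup_{x}\|\Phi^1_{t,\cdot}(s,x)\|_{L^1(\Omega)}$ and $\sup_{x}\|\Phi^2_{t,\cdot}(s,x)\|_{L^1(\Omega)}$ are bounded because the weights are Kusuoka--Stroock processes of order $0$ (together with $|\alpha_i|\le\|\alpha_i\|$). The passage from $f\in\C^\infty_b(\RR^N;\RR)$ to $f$ merely bounded continuous (resp. Lipschitz), and the asserted differentiability of $P_{t,s}f$ in the directions $\widetilde W_{[\alpha]}(t)$, follow by a routine mollification $f_\varepsilon\to f$: the right-hand sides of \eqref{eq:bdedIBP}--\eqref{eq:LipIBP} are stable under this limit since the weights are integrable, which forces the left-hand sides (the directional derivatives) to converge as well.
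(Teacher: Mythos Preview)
Your approach matches the paper's exactly: lift to the autonomous space-time process on $\RR^{N+1}$, observe that Assumption~\ref{ass:UFG} is precisely Kusuoka's UFG condition for $\{\widetilde W_0,\ldots,\widetilde W_d\}$, invoke his integration-by-parts formula (Lemma~8 of \cite{Kusuoka3}, see also Corollary~32 of \cite{crisan2010cubature}) for the trivially extended $g(t,x):=f(x)$, then take absolute values and pass to bounded-continuous or Lipschitz $f$ by mollification. The paper simply cites Kusuoka's result rather than re-deriving the Malliavin machinery you sketch; one small correction is that $\widetilde W_0=(1,W_0)$ is in fact bounded---the first component is the constant function $1$ and $W_0\in\C^\infty_b$ by Assumption~\ref{ass:smoothbounded}---so the ``bookkeeping'' obstacle you flag does not arise.
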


\begin{remark}
	We emphasise here that each $\widetilde{W}_{[\alpha_1]}(t)$ is a differential operator acting in $x$, parametrised by $t$, applied to the function $x \mapsto P_{t,s}f(x)$. These results are not valid for e.g. $\widetilde{W}_{[\alpha_1]}(u)P_{t,s}f(x)$ when $u \neq t$. We develop estimates on the derivative of $(t,x) \mapsto P_{t,s}f(x)$ as a function on $[0,T] \times \RR^N$ in Proposition \ref{prop:derivs}.
\end{remark}

\begin{proof}[Proof of Theorem \ref{th:IBPandGB}]
	Since the $\widetilde{UFG}$ condition is precisely the UFG condition of Kusuoka on $\RR^{N+1}$, we can use his results. We use the notation $\widetilde{P}_{s}g (\widetilde{x} ):= \EE\left[ g \left( \widetilde{X}^{\widetilde{x}}_s\right) \right]$ for a suitably integrable function $g : \RR^{N+1} \to \RR$.
	By Kusuoka \cite{Kusuoka3} Lemma 8 (see also \cite{crisan2010cubature} Corollary 32), we know for any $\alpha_1, \ldots, \alpha_n \in \A_1(m)$, there exist $\Phi^1_{\alpha_1, \ldots, \alpha_n} \in \K_0(\RR)$ and $\Phi^2_{\alpha_1, \ldots, \alpha_n} \in \K_0(\RR^{N+1})$ such that for $g \in \C^{\infty}_b(\RR^{N+1};\RR)$,
	\begin{equation*}
	\widetilde{W}_{[\alpha_1]}(t) \cdots \widetilde{W}_{[\alpha_n]}(t) \left( \widetilde{P}_{s}g \right)(\widetilde{x} ) = (s-t)^{- \frac{\| \alpha_1 \|+ \ldots, + \| \alpha_{n} \|}{2}} \EE \left[ g( \widetilde{X}_s^{\widetilde{x}}) \,  \Phi^1_{\alpha_1, \ldots, \alpha_n}(s,\widetilde{x}) \right],
	\end{equation*}
	and
	\begin{equation*}
	\widetilde{W}_{[\alpha_1]}(t) \cdots \widetilde{W}_{[\alpha_n]}(t) \left( \widetilde{P}_{s}g \right)(\widetilde{x}) = (s-t)^{- \frac{\| \alpha_1 \|+ \ldots, + \| \alpha_{n-1} \|}{2}} \EE \left[ \nabla g( \widetilde{X}_s^{\widetilde{x}}) \,  \Phi^2_{\alpha_1, \ldots, \alpha_n}(s,\widetilde{x}) \right].
	\end{equation*}
	Now for any function $f \in \C^{\infty}_b(\RR^N;\RR)$, we can extend it to $g \in \C^{\infty}_b(\RR^{N+1};\RR)$ by $g(t,x):=f(x)$. We then immediately have the integration by parts formulas \eqref{eq:bdedIBP} and \eqref{eq:LipIBP}.
	We get the bound stated in \eqref{eq:GB1} with the constant
	\[
	C= \sup_{s \in [t,T]} \sup_{x \in \RR^N} \EE \left| \Phi^1_{t, \alpha_1,\ldots, \alpha_n }(s,x)\right|,
	\]
	and a standard approximation argument gives the same estimate for bounded and continuous $f$. Similarly, we obtain the bound in \eqref{eq:GB2} with constant
	\[
	C= \sup_{s \in [t,T]} \sup_{x \in \RR^N} \EE \left| \Phi^2_{t, \alpha_1,\ldots, \alpha_n }(s,x)\right|,
	\]
	and a standard approximation argument allows one to obtain the same bound for $f$ Lipschitz with $\| \nabla f\|_{\infty}$ replaced by $\|f \|_{Lip}$.
\end{proof}

\subsection{Uniform H\"ormander setting}
\label{sec:hormander}
In this section, let us consider a stronger assumption than the $\widetilde{UFG}(m)$ condition. Suppose that a uniform strong H\"ormander condition of order $m$ holds - that is
\begin{assumption}
	\label{ass:USH}
	USH($m$) : There exists $\delta>0$ and $m \in \NN$ such that for all $\xi \in \RR^N$,
	\[
	\inf_{(t,z) \in [0,T] \times \RR^N} \sum_{\alpha \in \A_{\geq 1}(m)} \langle W_{[\alpha]}(t,z) , \xi \rangle^2 \geq \delta \, | \xi |^2.
	\]
\end{assumption}
In this case, we recover differentiability of $x \mapsto P_{t,T}f(x)$ in all directions.

\begin{corollary}
	\label{cor:hormanderIBP}
	Assume USH($m$) holds. Let $\eta$ be a multi-index on $\{1, \ldots, N\}$ and let $f$ be Lipschitz. Then,
	\[
	\sup_{x \in \RR^N} \left| \partial^{\eta} \left(P_{t,T} f\right)(x) \right| \leq  C \, \|f\|_{Lip}  \,(T-t)^{-(|\eta|-1)m/2}.
	\]
\end{corollary}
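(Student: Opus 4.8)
The plan is to express each coordinate derivative $\partial_{x_k}$ in terms of the directional (bracket) derivatives $\widetilde{W}_{[\alpha]}(t)$ controlled by Theorem~\ref{th:IBPandGB}, and then to bound the resulting strings of bracket‑derivatives using the \emph{Lipschitz} integration‑by‑parts formula~\eqref{eq:LipIBP}, the one that produces $\nabla f$ so that the final bracket costs no negative power of $T-t$. Since USH($m$) is stronger than the $\widetilde{UFG}(m)$ condition, Theorem~\ref{th:IBPandGB} is available. We may assume $T-t\le 1$: the case $T-t>1$ follows from the semigroup property $P_{t,T}=P_{t,T-1/2}\,P_{T-1/2,T}$, since the $T-t\le 1$ estimate (applied for every $\eta$) shows $P_{T-1/2,T}f$ has derivatives of all orders $\ge 1$ bounded by $C\|f\|_{Lip}$, after which the standard flow derivative bounds applied to $P_{t,T-1/2}$ give the result. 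It also suffices to prove the estimate for $f\in\C^{\infty}_b(\RR^N;\RR)$, the Lipschitz case following from the approximation argument already used in the proof of Theorem~\ref{th:IBPandGB}. So fix $f\in\C^{\infty}_b(\RR^N;\RR)$, write $u:=P_{t,T}f$ (smooth in $x$, with bounded derivatives) and $n:=|\eta|$.

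First I would invert the H\"ormander matrix. Let $\mathcal{I}\subseteq\A_1(m)$ be the finite family of multi‑indices appearing in Assumption~\ref{ass:USH}; every $\alpha\in\mathcal{I}$ has $\|\alpha\|\le m$ and $\widetilde{W}_{[\alpha]}$ has vanishing time component, so it is a vector field on $\RR^N$ with $\C^{\infty}_b$ coefficients. Set $M(t,x):=\sum_{\alpha\in\mathcal{I}}W_{[\alpha]}(t,x)\,W_{[\alpha]}(t,x)^{\top}$, a symmetric matrix with $\C^{\infty}_b$ entries satisfying $M(t,x)\ge\delta I$ by Assumption~\ref{ass:USH}; hence $\det M\ge\delta^{N}$ and Cramer's rule gives $M^{-1}\in\C^{\infty}_b$. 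Setting $c_{k,\alpha}(t,x):=\langle W_{[\alpha]}(t,x),\,M^{-1}(t,x)e_k\rangle\in\C^{\infty}_b$ we obtain $e_k=\sum_{\alpha\in\mathcal{I}}c_{k,\alpha}(t,x)\,W_{[\alpha]}(t,x)$ pointwise, i.e., as differential operators acting in $x$,
\[
\partial_{x_k}=\sum_{\alpha\in\mathcal{I}}c_{k,\alpha}(t,x)\,\widetilde{W}_{[\alpha]}(t),\qquad k=1,\dots,N.
\]

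Iterating this identity and using the Leibniz rule for the first‑order operators $\widetilde{W}_{[\alpha]}(t)$ — noting that $\widetilde{W}_{[\alpha]}(t)$ applied to a $\C^{\infty}_b$ coefficient is again $\C^{\infty}_b$ — an induction on $n$ shows that $\partial^{\eta}u$ is a finite sum of terms of the form $b(t,x)\,\widetilde{W}_{[\beta_1]}(t)\cdots\widetilde{W}_{[\beta_k]}(t)\,u$ with $b\in\C^{\infty}_b$, $\beta_i\in\mathcal{I}\subseteq\A_1(m)$, and $1\le k\le n$, the value $k=n$ occurring precisely in the top‑order terms (obtained by never differentiating a coefficient). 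To each such term I apply~\eqref{eq:LipIBP}: since $\Phi^2_{t,\beta_1,\dots,\beta_k}\in\K_0(t,\RR^N)$ its $L^{1}(\Omega)$‑norm is bounded uniformly in $(s,x)$, so, using $\|\beta_i\|\le m$,
\[
\bigl|\widetilde{W}_{[\beta_1]}(t)\cdots\widetilde{W}_{[\beta_k]}(t)\,u(x)\bigr|
=(T-t)^{-\frac{\|\beta_1\|+\cdots+\|\beta_{k-1}\|}{2}}\bigl|\EE\bigl[\nabla f(X^{t,x}_T)\,\Phi^2_{t,\beta_1,\dots,\beta_k}(T,x)\bigr]\bigr|
\le C\,\|f\|_{Lip}\,(T-t)^{-\frac{(k-1)m}{2}}.
\]
Since $|b|\le C$, $1\le k\le n$ and $T-t\le 1$, every term is $\le C\|f\|_{Lip}(T-t)^{-(n-1)m/2}$, and summing the finitely many of them gives the claim.

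The main obstacle is the choice of integration‑by‑parts formula. One must use the $\nabla f$‑version~\eqref{eq:LipIBP}, which drops the cost of the final bracket, rather than invoking the gradient bound~\eqref{eq:GB2} directly, which for a string of $n$ brackets of $|\cdot|$‑degree at most $m$ gives only the weaker exponent $-(nm-1)/2$ in place of $-(n-1)m/2$ (worse by a factor $(T-t)^{-(m-1)/2}$). Combined with the observation that the Leibniz expansion produces, at top order, exactly $n$ brackets each of $\|\cdot\|$‑degree at most $m$, this yields the sharp power. The remaining ingredients — that inverting the uniformly positive definite $\C^{\infty}_b$ matrix $M$ keeps the coefficients $c_{k,\alpha}$ in $\C^{\infty}_b$, and that the Malliavin weights $\Phi^2$ lie in $\K_0$ so their expectations are uniformly bounded — are routine consequences of Theorem~\ref{th:IBPandGB} and the Kusuoka–Stroock calculus of Section~\ref{sec:introKSP}.
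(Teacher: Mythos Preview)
Your proof is correct and follows essentially the same strategy as the paper: both invert the uniformly positive H\"ormander matrix to write $\partial_{x_k}$ as a $\C^\infty_b$ combination of the bracket operators $\widetilde W_{[\alpha]}(t)$, $\alpha\in\A_1(m)$, and then appeal to the Kusuoka--Stroock integration-by-parts machinery to control the resulting strings. The only cosmetic difference is that the paper treats the innermost derivative via the chain rule $\partial^i_x\EE[f(X^{t,x}_T)]=\sum_k\EE[\partial^k f(X^{t,x}_T)\,\partial^i_x(X^{t,x}_T)^k]$ and then applies the more general Kusuoka IBPF (with the $\K_0$ weight $\partial^i_x X^{t,x}_T$ inside the expectation) for each further bracket, whereas you substitute every coordinate derivative by brackets, collect the Leibniz terms, and invoke \eqref{eq:LipIBP} directly on the full string; both routes extract $\nabla f$ once and cost $(T-t)^{-m/2}$ for each of the remaining $|\eta|-1$ brackets.
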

\begin{proof}
	First let $f \in \C^{\infty}_b(\RR^N;\RR)$.
	For the first order derivatives, 
	\begin{align*}
	\partial^i_{x} \EE \left[ f(X^{t,x}_T) \right] = \sum_{k=1}^N \EE \left[ \partial^k f (X^{t,x}_T) \, \partial^i_x (X^{t,x}_T)^k\right].
	\end{align*}
	For the higher order derivatives, we note that there exist $F^i_{\alpha} \in \C^{\infty}_b([0,T] \times \RR^N;\RR)$ such that
	\[
	e_j = \sum_{\alpha \in \A_{\geq 1}(m)} F^j_{\alpha}(t,x) \, W_{[\alpha]}(t,x),
	\]
	where $e_j$ is the $j$-th standard basis vector in $\RR^N$. To see this, define $\W(t,x)$ to be the $N \times \text{card}(\A_{\geq 1}(m))$ matrix whose columns are the vector fields $(W_{[\alpha]})_{\alpha \in  \A_{\geq 1}(m)}$ evaluated at $(t,x)$. USH($m$) guarantees that $\W \W^{\top}(t,x)$ is invertible. Then, 
	\[
	F^j_{\alpha}(t,x) : = \left( \W^{\top} [\W \W^{\top}]^{-1}(t,x) \, e_j \right)_{\alpha}
	\]
	satisfies the above relation. Then, for the second order derivatives,
	\begin{align*}
	\partial^{(j,i)}_{x} \EE \left[   f(  X_T^{t,x}) \right]   &= 	\sum_{k=1}^N \partial^j_x \EE \left[ \partial^k f (X^{t,x}_T) \, \partial^i_x (X^{t,x}_T)^k\right]  \\
	&= \sum_{k=1}^N \sum_{\alpha \in \A_{\geq 1}(m)} F^j_{\alpha}(t,x) \, W_{[\alpha]}(t) \left( \EE \left[ \partial^k f (X^{t,x}_T) \, \partial^i_x (X^{t,x}_T)^k\right] \right).
	\end{align*}
	We note that $\partial^i_x (X^{t,x}_T)^k \in \K_0(t,\RR)$, so we can apply the IBPF in Kusuoka \cite{Kusuoka3} Lemma 8 to obtain the existence of $H_{t,\alpha} \in \K_0(t,\RR)$ such that
	\begin{align*}
	\partial^{(j,i)}_{x} \EE \left[   f(  X_T^{t,x}) \right]  
	&= \sum_{k=1}^N \sum_{\alpha \in \A_{\geq 1}(m)} F^j_{\alpha}(t,x) \, (T-t)^{-|\alpha|/2} \, \EE \left[ \partial^k f (X^{t,x}_T) \,H_{t,\alpha} \left( T,x \right)\right] \\
	&= (T-t)^{-m/2} \sum_{k=1}^N  \EE \left[ \partial^k f (X^{t,x}_T) \,\bar{H}_{t,i,j,k}(T,x) \right],
	\end{align*}
	where
	\[
	\bar{H}_{t,i,j,k} (T,x):= \sum_{\alpha \in \A_{\geq 1}(m)}  (T-t)^{(m-|\alpha|)/2} F^j_{\alpha}(t,x) H_{t,\alpha} \left( T,x \right) \in \K_0(t,\RR),
	\]
	and we have used that for all $\alpha \in \A_{\geq 1}(m)$, $\| \alpha\|=|\alpha| \leq m$. We get the bound:
	\begin{align*}
	\sup_{x \in \RR^N}	\left|\partial^{(j,i)}_{x} \EE \left[   f(  X_T^{t,x}) \right]  \right|
	\leq (T-t)^{-m/2} \| \nabla f\|_{\infty} \, \sum_{k=1}^N \sup_{x \in \RR^N}   \EE \left|\bar{H}_{t,i,j,k}(T,x) \right|.
	\end{align*}
	We can iterate this argument as many times as we like. Then we can get the bound for a Lipschitz $f$ using approximation as before.
\end{proof}

\subsection{Connection with PDE}
\label{sec:PDE}

In this section, we make use of the integration by parts formulae of Theorem \ref{th:IBPandGB} to extend the notion of classical solution to the PDE \eqref{eq:PDE} to the case when the solution is not classically differentiable in all directions. The notation and arguments in this section closely follows Crisan \& Delarue \cite{crisandelarue}, who provide a similar notion of solution to semilinear PDEs with coefficients which do not depend on time. The idea is very simple: it is a standard result that for a terminal condition $f \in \C^{\infty}_p(\RR^N;\RR)$ the PDE \eqref{eq:PDE} has a classical solution. For $f \in \C_p(\RR^N;\RR)$, we consider a sequence of smooth approximations $(f_l)_{l \geq 1}$ to which we can associate solutions $(v_l)_{l \geq 1}$ to \eqref{eq:PDE}. For each $v_l$ we can use the integration by parts formula of Theorem \ref{th:IBPandGB} to write the derivatives $W_i^2(t)v_l(t,x)$ in a form which does not depend on any derivatives of $f_l$. We then show that the PDE still holds in the limit $l \to \infty$.

We introduce some function spaces we will need to define what we mean by a classical solution. We denote by $\BB(0,R)$  the open ball in $\RR^N$ of radius $R>0$ centred at zero. Let $\phi \in \C^{\infty}_b([0,T-1/r]\times \BB(0,R);\RR)$ and define
\[
\| \phi \|^{\widetilde{W}_0, 1}_{[0,T-1/R]\times \BB(0,R);\infty } := \| \phi \|_{[0,T-1/R]\times \BB(0,R);\infty} + \| \widetilde{W}_0(t) \phi \|_{[0,T-1/R]\times \BB(0,R);\infty} .
\]
Define $\calD^{1, \infty}_{\widetilde{W}_0}([0,T-1/R]\times \BB(0,R))$ as the closure of $\C^{\infty}_b([0,T-1/R]\times \BB(0,R);\RR)$ in $\C_b([0,T-1/R] \times \bar{\BB}(0,R);\RR)$ with respect to the norm $\| \cdot \|^{\widetilde{W}_0, 1}_{[0,T-1/R]\times \BB(0,R);\infty }$. And define
\[
\calD^{1, \infty}_{\widetilde{W}_0}([0,T)\times \RR^N):= \bigcap_{R \geq 1} \calD^{1, \infty}_{\widetilde{W}_0}([0,T-1/R]\times \BB(0,R)).
\]
Now, take $\psi \in \C^{\infty}_b(\RR^N;\RR)$ and, for any ball $\BB$, define
\[
\| \psi \|^{W(t),1}_{\BB, \infty} := \|\psi\|_{\BB, \infty} + \sum_{i=1}^d \| W_i(t) \psi \|_{\BB, \infty},
\]
and
\[
\| \psi \|^{W(t),2}_{\BB, \infty} :=	\| \psi \|^{W(t),1}_{\BB, \infty} + \sum_{i=1}^d \| W_i^2(t) \psi \|_{\BB, \infty}.
\]
We define $\calD_{W(t)}^{2, \infty} (\BB)$ to be the closure of $\C^{\infty}_b(\BB;\RR)$ in $\C_b(\bar{\BB};\RR)$ with respect to $	\| \cdot \|^{W(t),2}_{\BB, \infty}$ and 
\[
\calD_{W(t)}^{2, \infty} (\RR^N) := \bigcap_{	R \geq 1} \calD_{W(t)}^{2, \infty} (\BB(0,R)).
\]

\begin{definition}[Classical solution]
	\label{def:classical}
	We define a function $v:[0,T]\times \RR^N \to \RR$ to be a classical solution to \eqref{eq:PDE} if the following three conditions are satisfied:
	\begin{enumerate}
		\item $v \in \calD^{1, \infty}_{\widetilde{W}_0}([0,T)\times \RR^N)$ and for each $t \in [0,T)$, $v(t, \cdot) \in 	\calD_{W(t)}^{2, \infty} (\RR^N)$, such that for $i=1, \ldots, d$,
		\[
		[0,T) \times \RR^N \ni(t,x) \mapsto \left(W_i(t)v(t,x), W_i^2(t) v(t,x) \right)
		\]
		is a continuous function.
		\item For all $(t,x) \in [0,T) \times \RR^N$,
		\[
		\widetilde{W}_0v(t,x) + \frac{1}{2} \sum_{i=1}^d W_i^2 v(t,x) = 0
		\]
		\item $\lim_{(t,y) \to (T,x)} v(t,y) = f(x)$ for all $x \in \RR^N$.
	\end{enumerate}
\end{definition}

\begin{remark}
	\begin{enumerate}
		\item Note that since, in general, the space $\calD_{W(t)}^{2, \infty} (\RR^N)$ is different for each $t \in [0,T)$, our definition requires that $v(t, \cdot)$ belongs to a different space at each time $t \in [0,T)$.
		\item If $\phi \in \C^{1,2}([0,T)\times \RR^N;\RR)$, then $\phi \in \calD^{1, \infty}_{\widetilde{W}_0}([0,T)\times \RR^N)$ and $\phi(t,\cdot) \in \calD_{W(t)}^{2, \infty} (\RR^N)$ for all $t \in [0,T)$. Moreover,
		\begin{align*}
		\| \phi \|^{\widetilde{W}_0, 1}_{[0,T-1/R]\times \BB(0,r);\infty } & \leq C_R \, \big\{ \| \phi \|_{[0,T-1/r]\times \BB(0,r);\infty} 
		+ \| \partial_t \phi \|_{[0,T-1/R]\times \BB(0,R);\infty}  \\
		&  \quad \quad \quad +  \| \nabla \phi \|_{[0,T-1/R]\times \BB(0,R);\infty} \big\} \\
		\| \phi \|^{W(t),2}_{\BB(0,R), \infty} & \leq C_R \left\{   \|\phi(t, \cdot) \|_{\BB(0,R);\infty} +  \|\nabla \phi(t, \cdot) \|_{\BB(0,R);\infty} + \| \nabla^2 \phi(t, \cdot) \|_{\BB(0,R);\infty} \right\},
		\end{align*}
		where
		\begin{align*}
		C_R = 1 + \|W_0 \|_{[0,T-1/R]\times \BB(0,R);\infty} + \sum_{i=1}^d  \|W_i(t,\cdot)\|_{\BB(0,R);\infty} + \|\partial W_i(t,\cdot)\|_{\BB(0,R);\infty}.
		\end{align*}
		It is then clear that our definition truly is an \emph{extension} of the usual definition of classical solution.
	\end{enumerate}
\end{remark}

\noindent With this definition in hand, we have the following theorem.
\begin{theorem}
	\label{th:PDE}
	Assume that $\widetilde{UFG}(m)$ holds and let $f:\RR^N \to \RR$ be continuous with polynomial growth. Then, $v(t,x):=P_{t,T}f(x)$ is a classical solution to \eqref{eq:PDE}. It is also the unique solution amongst those which satisfy the following polynomial growth condition: there exists $q>0$ such that
	\[
	\left| v(t,x)\right| \leq C (1+|x|)^q \quad \forall t \in [0,T], x \in \RR^N
	\]
\end{theorem}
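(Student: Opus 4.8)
The plan is to prove existence by first treating smooth terminal data, for which \eqref{eq:PDE} has a genuine classical solution, and then passing to the limit along smooth approximations; the whole difficulty is to keep the directional derivatives of the approximants under control uniformly, which is exactly what the integration by parts formulae of Theorem~\ref{th:IBPandGB} provide. \textbf{Smooth data.} If $f\in\C^{\infty}_b(\RR^N;\RR)$, or more generally if $f$ is smooth with polynomial growth, then standard Feynman--Kac theory for \eqref{eq:SDE} (whose coefficients are smooth, polynomially bounded and time-dependent) shows that $v(t,x):=P_{t,T}f(x)=\EE[f(X_T^{t,x})]$ belongs to $\C^{1,2}([0,T)\times\RR^N;\RR)$ and solves \eqref{eq:PDE} in the ordinary sense; by the remark following Definition~\ref{def:classical} it is then also a classical solution in our extended sense, so the representation and the embedding into our class are immediate here.

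\textbf{Approximation and control of derivatives.} For general continuous $f$ with polynomial growth, take mollifications $f_\ell$, so that $f_\ell$ is smooth, $f_\ell\to f$ locally uniformly, and the $f_\ell$ obey a common polynomial growth bound. Write $v_\ell(t,x):=P_{t,T}f_\ell(x)$. Using the moment bounds on $X_T^{t,x}$ and dominated convergence, $v_\ell\to v:=P_{t,T}f$ locally uniformly on $[0,T]\times\RR^N$. The point is that the first and second directional derivatives converge as well. First I would extend formula \eqref{eq:bdedIBP} from bounded to polynomially growing smooth terminal data, by a cutoff argument together with differentiation under the expectation, using that the Kusuoka--Stroock weights have finite moments of every order. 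Applying it with $n=1$, $\alpha_1=(i)$, and with $n=2$, $\alpha_1=\alpha_2=(i)$, gives for each $t<T$
\[
W_i(t)\,v_\ell(t,x)=(T-t)^{-1/2}\,\EE\!\left[f_\ell(X_T^{t,x})\,\Phi^1_{t,(i)}(T,x)\right],\qquad W_i^2(t)\,v_\ell(t,x)=(T-t)^{-1}\,\EE\!\left[f_\ell(X_T^{t,x})\,\Phi^1_{t,(i),(i)}(T,x)\right],
\]
whose right-hand sides carry no derivative of $f_\ell$. Working with the space--time lift \eqref{eq:spacetime2}, so that the weights lie in $\K_0(\RR^{N+1})$ in the full variable $(t,x)$, these expressions are jointly continuous in $(t,x)$ on $[0,T)\times\RR^N$ and, again by dominated convergence, converge locally uniformly as $\ell\to\infty$ to continuous functions that we call $W_i(t)v(t,x)$ and $W_i^2(t)v(t,x)$. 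Since each $v_\ell\in\C^{1,2}$ solves \eqref{eq:PDE}, one has $\widetilde W_0 v_\ell=\partial_t v_\ell+W_0\cdot\nabla v_\ell=-\tfrac12\sum_{i=1}^d W_i^2(t)v_\ell$, so $\widetilde W_0 v_\ell$ converges locally uniformly too.

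\textbf{Conclusion of existence and uniqueness.} By the definitions of $\calD^{2,\infty}_{W(t)}(\RR^N)$ and $\calD^{1,\infty}_{\widetilde W_0}([0,T)\times\RR^N)$ as closures, the locally uniform convergence of $(v_\ell,W_i(t)v_\ell,W_i^2(t)v_\ell)$ and of $(v_\ell,\widetilde W_0 v_\ell)$, together with continuity of the limits, shows that $v(t,\cdot)\in\calD^{2,\infty}_{W(t)}(\RR^N)$ for each $t\in[0,T)$, that $v\in\calD^{1,\infty}_{\widetilde W_0}([0,T)\times\RR^N)$, and that $(t,x)\mapsto(W_i(t)v(t,x),W_i^2(t)v(t,x))$ is continuous; this is condition~(1) of Definition~\ref{def:classical}. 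Passing \eqref{eq:PDE} to the limit gives $\widetilde W_0 v+\tfrac12\sum_{i=1}^d W_i^2 v=0$, condition~(2); and $v(t,y)=\EE[f(X_T^{t,y})]\to f(x)$ as $(t,y)\to(T,x)$ since $X_T^{t,y}\to x$ in every $L^p$ and the growth bound on the $f_\ell$ is uniform, condition~(3). For uniqueness, let $w$ be any classical solution with $|w(t,x)|\le C(1+|x|)^q$. Following Crisan \& Delarue \cite{crisandelarue}, approximating $w$ in the norms of the spaces $\calD^{2,\infty}_{W(s)}$ and $\calD^{1,\infty}_{\widetilde W_0}$ by smooth functions and applying the ordinary It\^o--Stratonovich formula to the approximants, one shows that $s\mapsto w\big(s\wedge\tau_R,\,X^{t,x}_{s\wedge\tau_R}\big)$ is a martingale on $[t,T)$, where $\tau_R$ is the first exit time of $X^{t,x}$ from $\BB(0,R)$; its drift is $\big(\widetilde W_0 w+\tfrac12\sum_i W_i^2 w\big)(s,X^{t,x}_s)=0$, and only the directional derivatives supplied by condition~(1) appear. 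Letting $R\to\infty$ (polynomial bound on $w$ plus moment bounds on $X^{t,x}$) and then $s\uparrow T$ (condition~(3)) yields $w(t,x)=\EE[f(X_T^{t,x})]=v(t,x)$.

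\textbf{Main obstacle.} The crux is the second step: a direct estimate on $W_i^2(t)v_\ell$ would involve $\|\nabla^2 f_\ell\|_\infty$, which diverges, so everything rests on expressing the second-order directional derivative through \eqref{eq:bdedIBP}, on extending that formula to polynomially growing data, and on obtaining \emph{joint} continuity in $(t,x)$ — not merely in $x$ — of the Malliavin-weighted expectations, for which the space--time lift \eqref{eq:spacetime2} is essential. In the uniqueness argument, the corresponding difficulty is justifying the It\^o--Stratonovich chain rule for a function differentiable only along the vector fields $\widetilde W_0,W_1,\dots,W_d$; this is precisely the mechanism of \cite{crisandelarue} adapted to the time-inhomogeneous setting.
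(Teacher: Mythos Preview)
Your proposal is correct and follows essentially the same strategy as the paper: existence via mollification of $f$ together with the integration-by-parts formulae \eqref{eq:bdedIBP} to control $W_i v_\ell$ and $W_i^2 v_\ell$ uniformly (and hence $\widetilde W_0 v_\ell$ through the PDE), and uniqueness via the It\^o formula for functions differentiable only along the vector fields (the paper packages this as Lemma~\ref{lem:Itoformula}). The only cosmetic difference is that the paper argues square integrability of $v(u,X^{t,x}_u)$ directly to upgrade the local martingale, whereas you localise with the exit time $\tau_R$; both routes are standard and equivalent here.
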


The proof of uniqueness relies on an It\^{o} formula valid for functions differentiable in the directions of the vector fields. We will also need a stochastic Taylor expansion based on this formula in Section \ref{sec:cuberror} for the analysis of the error in the cubature on Wiener space algorithm.

\begin{lemma}
	\label{lem:Itoformula}
	Let $v:[0,T) \times \RR^N \to \RR$ satisfy part (1) of Definition \ref{def:classical} and be of at most polynomial growth. Then, for all $u \in [t,T)$,
	\begin{align}
	\label{eq:Ito}
	\begin{split}
	v(u,X^{t,x}_u) = v(t,x) +  \int_t^u & \left[ \widetilde{W}_0(s) v(s,X_s^{t,x}) + \frac{1}{2} \sum_{i=1}^d W_i^2 (s)v(s,X_s^{t,x}) \right]  ds \\
	& + \sum_{i=1}^d \int_t^u W_i(s) v(s, X_s^{t,x}) d B^i_s.
	\end{split}
	\end{align}
\end{lemma}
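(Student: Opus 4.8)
The plan is to obtain \eqref{eq:Ito} from the classical It\^o formula by a regularisation and localisation argument, working with the space-time process $\widetilde X^{(t,x)}_s=(s,X^{t,x}_s)$ of \eqref{eq:spacetime2}, whose generator is $\widetilde W_0+\tfrac12\sum_{i=1}^d\widetilde W_i^2$; since for $i\geq 1$ the field $\widetilde W_i$ has vanishing time component, $\widetilde W_i$ acts only in $x$ and $\widetilde W_i^2=W_i^2$ in the paper's notation, so the drift below will match the right-hand side of \eqref{eq:Ito}. First I would fix $u\in[t,T)$ and choose $R_0$ with $u\leq T-1/R_0$, and localise by the stopping times $\tau_R:=\inf\{s\geq t:|X^{t,x}_s|\geq R\}\wedge u$. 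On $[t,\tau_R]$ the path $\widetilde X^{(t,x)}$ stays in the compact set $K_R:=[t,T-1/R_0]\times\bar\BB(0,R)$, on which the regularity hypotheses on $v$ can be exploited, and since $X^{t,x}$ has continuous paths and $u<T$ we have $\tau_R\uparrow u$ almost surely.

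The technical heart is the regularisation step. Using that $v\in\calD^{1,\infty}_{\widetilde W_0}([0,T)\times\RR^N)$ and $v(s,\cdot)\in\calD^{2,\infty}_{W(s)}(\RR^N)$ for each $s$, together with the joint continuity of $(s,x)\mapsto(W_i(s)v(s,x),W_i^2(s)v(s,x))$ required in Definition \ref{def:classical}(1), I would construct smooth functions $v_k\in\C^{1,2}$ (mollifying $v$ in both time and space, extending $v$ slightly beyond $T-1/R_0$ — which is possible since $v$ is defined up to but below $T$ — to allow a two-sided time mollifier) such that, uniformly on $K_R$,
\[
v_k\to v,\qquad \widetilde W_0 v_k\to\widetilde W_0 v,\qquad W_i v_k\to W_i v,\qquad W_i^2 v_k\to W_i^2 v\quad(i=1,\dots,d).
\]
Here $\partial_t$ commutes with mollification, so that $\widetilde W_0 v_k\to\widetilde W_0 v$ reduces to a Friedrichs-type commutator estimate for the first-order operator $W_0\cdot\nabla_x$ against the mollifier, and the convergence of the spatial directional derivatives follows in the same way; the uniform-in-$s$ bounds on $W_0,\dots,W_d$ from Assumption \ref{ass:smoothbounded} make all of this quantitative on $K_R$. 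Combining the two separate approximation properties built into the definition of a classical solution into a \emph{single} space-time smooth sequence is the step I expect to be the main obstacle, and this part of the argument closely follows Crisan \& Delarue \cite{crisandelarue}.

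For each $v_k$ (extended to a globally $\C^{1,2}_b$ function by a spatial cutoff, so that only its values on $K_R$ matter up to time $\tau_R$), the classical It\^o formula applied along $\widetilde X^{(t,x)}$ — passing from the Stratonovich form \eqref{eq:spacetime2} to It\^o form and collecting the second-order corrections — gives, for $s\in[t,u]$,
\[
v_k(s\wedge\tau_R,X^{t,x}_{s\wedge\tau_R})=v_k(t,x)+\int_t^{s\wedge\tau_R}\!\Big[\widetilde W_0 v_k+\tfrac12\textstyle\sum_{i=1}^d W_i^2 v_k\Big](r,X^{t,x}_r)\,dr+\sum_{i=1}^d\int_t^{s\wedge\tau_R}\!W_i v_k(r,X^{t,x}_r)\,dB^i_r .
\]
Letting $k\to\infty$, on $[t,\tau_R]$ the integrands are bounded uniformly in $k$ and converge uniformly, so the endpoint terms and the $dr$-integral converge by bounded convergence and the stochastic integrals converge in $L^2$ by the It\^o isometry; this yields \eqref{eq:Ito} with $u$ replaced by $u\wedge\tau_R$ and $v$ in place of $v_k$. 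Finally, letting $R\to\infty$, I would use $\tau_R\uparrow u$ a.s.\ together with the polynomial growth of $v$ and of its directional derivatives and the finiteness of all moments of $X^{t,x}$ to pass to the limit by dominated convergence in the $dr$-integral and in expectation, and by a Burkholder--Davis--Gundy estimate for the martingale term, which gives \eqref{eq:Ito}.
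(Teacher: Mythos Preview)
Your proposal is correct and is precisely the mollification argument the paper invokes (its proof is the single line ``This can be proved by a mollification argument as in Proposition 7.1 in \cite{crisandelarue}''), including the explicit reference to Crisan \& Delarue. One small remark: in the final delocalisation step you do not need, and are not given, polynomial growth of the directional derivatives --- since $u<T$ each path of $\widetilde X^{(t,x)}$ on $[t,u]$ stays in a (random) compact subset of $[0,T)\times\RR^N$ on which the continuous functions $\widetilde W_0 v$, $W_i v$, $W_i^2 v$ are bounded, so the $R\to\infty$ limit holds pathwise and the stochastic integral is a local martingale, which is all \eqref{eq:Ito} asserts.
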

\begin{proof}
	This can be proved by a mollification argument as in Proposition 7.1 in \cite{crisandelarue}.
\end{proof}

We can now prove Theorem \ref{th:PDE}.

\begin{proof}[Proof of Theorem \ref {th:PDE}]
	\emph{Existence}:
	Denote by $(f_l)_{l  \geq 1}$ a sequence of mollifications of $f$. Since $f$ is continuous, $f_l$ converges to $f$ uniformly on compact subsets of $\RR^N$. Since $v_l(t,x) - v(t,x)= \EE \left[ f_l(X^{t,x}_T) - f(X^{t,x}_T) \right]$, it is clear that $v_l$ converges to $v$ uniformly on compact subsets of $[0,T] \times \RR^N$. Therefore, $v$ is continuous up to the boundary at $t=T$. Now, consider the integration by parts formula for $W_i v_l$ and $W_i^2 v_l$ provided by \eqref{eq:bdedIBP} as part of Theorem \ref{th:IBPandGB}. We get
	\begin{align*}
	& W_i v_l(t,x) = (T-t)^{-1/2} \, \EE \left[ f_l(X_T^{t,x}) \Phi^1_{t,(i)}(T,x) \right], \\
	&  W_i^2 v_l(t,x) = (T-t)^{-1} \, \EE \left[ f_l(X_T^{t,x}) \Phi^1_{t,(i,i)}(T,x) \right],
	\end{align*}
	where, crucially, $\Phi^1_{t,(i)}, \Phi^1_{t,(i,i)}$ are independent of $f_l$. Then, considering the differences $W_i v_l(t,x)- W_i v_m(t,x)$ and $W_i^2 v_l(t,x)- W_i^2 v_m(t,x)$ over compact subsets of $[0,T) \times \RR^N$, we see that $(W_i v_l, W_i^2 v_l )_{l \geq 1}$ converges uniformly on compact subsets of $[0,T) \times \RR^N$. This proves that $W_iv, W_i^2v$ exist and are continuous. Now, each $f_l \in \C_p^{\infty}(\RR^N;\RR)$, so associated to each, there is a classical solution $v_l$ of the PDE \eqref{eq:PDE}. Since $\widetilde{W}_0 v_l = \textstyle - \frac{1}{2} \sum_{i=1}^d W_i^2 v_l$, and $W_i^2v_l \to W_i^2 v$ uniformly on compacts in $[0,T)\times \RR^N$, we get that $v \in  \calD^{1, \infty}_{\widetilde{W}_0}([0,T)\times \RR^N)$. Moreover, taking the limit in the PDE satisfied by $v_l$ shows that it is also satisfied by $v$.
	
	\emph{Uniqueness:} Using the It\^{o} formula in Lemma \ref{lem:Itoformula}, we have for $u<T$
	\begin{align*}
	v(u,X^{t,x}_u) = v(t,x) &+  \int_t^u \left[ \widetilde{W}_0(s) v(s,X_s^{t,x}) + \frac{1}{2} \sum_{i=1}^d W_i^2(s) v(s,X_s^{t,x}) \right]  ds \\
	& + \sum_{i=1}^d \int_t^u W_i(s) v(s, X_s^{t,x}) d B^i_s.
	\end{align*}
	Using part (2) of the definition, the drift term is zero and
	\begin{equation}
	\label{eq:PDEdiff}
	v(u,X^{t,x}_u) = v(t,x) +  \sum_{i=1}^d \int_t^u W_i(s) v(s, X_s^{t,x}) d B^i_s.
	\end{equation}
	
	Now, using that $v$ has polynomial growth and $X_u^{t,x}$ has moments of all orders, we can easily show that the left hand side of \eqref{eq:PDEdiff} is square integrable, and so the right hand side is too. Hence the right hand side is a true martingale
	and we can take expectation in \eqref{eq:PDEdiff} to get
	\[
	\EE v(u,X^{t,x}_{u}) = v(t,x)
	\]
	and using part (3) of the definition (continuity of $v$ at the boundary $t=T$) we can take $u \nearrow T$ to get
	\[
	\EE f(X^{t,x}_{T}) = v(t,x),
	\]
	which proves uniqueness.
\end{proof}

\subsection{Derivatives in the direction $\widetilde{W}_0$}
\label{sec:v0}

In Theorem \ref{th:IBPandGB}, we established integration by parts formulae for derivatives of $x \mapsto P_{t,T}f(x)$ in the directions $\{\widetilde{W}_{[\alpha]}(t), \alpha \in \A_1(m)\}$. However $0 \notin \A_1(m)$, so we have no control over derivatives in the direction $\widetilde{W}_0$. Using that $P_{t,T}f(x)$ solves PDE \eqref{eq:PDE} we are now able to estimate derivatives in the $\widetilde{W}_0$ direction.

\begin{proposition}
	\label{prop:derivs}
	Assume $\widetilde{UFG}(m)$ holds. Let $\alpha=(\alpha_1, \ldots, \alpha_n) \in \A$ and use the notation $\widetilde{W}_{\alpha}(t)= \widetilde{W}_{\alpha_1}(t) \cdots \widetilde{W}_{\alpha_n}(t)$. Then, the function $v(t,x):=P_{t,T}f(x)$ is differentiable in the directions $\widetilde{W}_0(t), W_1(t), \ldots, W_d(t)$ and the following bounds hold for all $t \in [0,T)$:
	for $f$ continuous and bounded,
	\begin{equation}
	\label{eq:PDEbdCts}
	\sup_{x \in \RR^N} \left|	\widetilde{W}_{\alpha}(t) v(t,x) \right | \leq C \, \|f \|_{\infty} \, (T-t)^{\frac{-(\| \alpha_1 \|+ \ldots, + \| \alpha_n \|)}{2}} .
	\end{equation}
	For $f$ Lipschitz,
	\begin{equation}
	\label{eq:PDEbdLip}
	\sup_{x \in \RR^N} \left|	\widetilde{W}_{\alpha}(t) v(t,x) \right | \leq C \, \|f \|_{\text{Lip}} \, (T-t)^{\frac{1-(\| \alpha_1 \|+ \ldots, + \| \alpha_n \|)}{2}}.
	\end{equation}
\end{proposition}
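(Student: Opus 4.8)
The plan is to reduce every iterated directional derivative $\widetilde{W}_\alpha(t)v(t,\cdot)$ to a finite sum of the ``$x$-only'' derivatives already estimated in Theorem~\ref{th:IBPandGB}, using the PDE~\eqref{eq:PDE} to trade each factor $\widetilde{W}_0$ for two factors among $W_1,\dots,W_d$. The only direction not covered by Theorem~\ref{th:IBPandGB} is $\widetilde{W}_0$, and the key structural fact is that among $\widetilde{W}_0,\dots,\widetilde{W}_d$ — and indeed among all brackets, since it is immediate by induction that $\widetilde{W}_{[\beta]}$ has vanishing first component for every $\beta\in\A$ with $\beta\neq(0)$ — only $\widetilde{W}_0=(1,W_0)$ has a nonzero first (time) component. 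Hence as soon as the $\widetilde{W}_0$'s have been removed, everything is a frozen-$t$ differential operator in $x$ applied to $x\mapsto v(t,x)$, which is exactly the setting of Theorem~\ref{th:IBPandGB}. I would first treat $f\in\C^{\infty}_b(\RR^N;\RR)$: by Assumption~\ref{ass:smoothbounded} and standard theory, $v(t,x)=P_{t,T}f(x)$ is a bounded classical $\C^{1,2}$ solution of~\eqref{eq:PDE}, so all directional derivatives in the statement exist classically and $\widetilde{W}_0(t)v=-\tfrac12\sum_{i=1}^dW_i^2(t)v$ holds pointwise.

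\textbf{Commutation and elimination of $\widetilde{W}_0$.} Suppose $\alpha$ contains $k\geq1$ zeros. I would repeatedly use the operator identity $\widetilde{W}_0(t)\,W_i(t)=W_i(t)\,\widetilde{W}_0(t)+\widetilde{W}_{[(0,i)]}(t)$ — which one checks directly (the $\partial_t$ inside $\widetilde{W}_0$ also hits the coefficient $W_i(t,\cdot)$, producing precisely the $x$-only bracket $\widetilde{W}_{[(0,i)]}$) — to push every $\widetilde{W}_0$ to the right past the $W_i$'s. Each move splits a term into one in which the $\widetilde{W}_0$ has advanced one place and one in which the pair $\widetilde{W}_0W_i$ is replaced by the single operator $\widetilde{W}_{[(0,i)]}$; the latter term has one fewer $\widetilde{W}_0$. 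Since each step either removes a $\widetilde{W}_0$ or moves one strictly to the right, and commuting past the accumulated brackets works the same way, after finitely many steps $\widetilde{W}_\alpha(t)v$ is a finite sum of terms $\widetilde{W}_{[\beta_1]}(t)\cdots\widetilde{W}_{[\beta_p]}(t)\,\widetilde{W}_0^{\,a}(t)v(t,x)$ with $\beta_j\neq(0)$ and $a\leq k$, the $\widetilde{W}_0$'s sitting innermost. There I substitute the PDE, $\widetilde{W}_0^{\,a}(t)v=\widetilde{W}_0^{\,a-1}(t)\bigl(-\tfrac12\sum_iW_i^2(t)v\bigr)$, and repeat the commutation; after $a$ rounds all $\widetilde{W}_0$'s are gone. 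Crucially the $\|\cdot\|$-weight is conserved at every commutation and every PDE substitution, since $\|(0)\|=2=\|(i)\|+\|(i)\|$ and $\|(0)\|+\|(i)\|=3=\|(0,i)\|$, so every term produced has total weight exactly $\|\alpha_1\|+\dots+\|\alpha_n\|$.

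\textbf{Reduction to Theorem~\ref{th:IBPandGB} and approximation.} At this stage $\widetilde{W}_\alpha(t)v=\sum_r\widetilde{W}_{[\gamma^r_1]}(t)\cdots\widetilde{W}_{[\gamma^r_{q_r}]}(t)v(t,x)$ with all $\gamma^r_j\neq(0)$ and $\sum_j\|\gamma^r_j\|\leq\|\alpha_1\|+\dots+\|\alpha_n\|$. I would then invoke $\widetilde{UFG}(m)$ (Assumption~\ref{ass:UFG}) to write each $\widetilde{W}_{[\gamma^r_j]}$ with $\|\gamma^r_j\|>m$ as a $\C^{\infty}_b$-combination of $\{\widetilde{W}_{[\delta]}:\delta\in\A_1(m)\}$; expanding and applying the Leibniz rule reduces $\widetilde{W}_\alpha(t)v$ to a finite sum of terms $\varphi(t,x)\,\widetilde{W}_{[\delta_1]}(t)\cdots\widetilde{W}_{[\delta_s]}(t)v(t,x)$ with $\varphi\in\C^{\infty}_b$, $\delta_i\in\A_1(m)$ and $\sum_i\|\delta_i\|\leq\|\alpha_1\|+\dots+\|\alpha_n\|$ (terms where an operator differentiates a coefficient only make $\sum_i\|\delta_i\|$ smaller). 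Applying \eqref{eq:GB1} and \eqref{eq:GB2} of Theorem~\ref{th:IBPandGB} to each $\widetilde{W}_{[\delta_1]}(t)\cdots\widetilde{W}_{[\delta_s]}(t)(P_{t,T}f)(x)$ and using $T-t\leq T$ bounds every term by $C\|f\|_\infty(T-t)^{-(\|\alpha_1\|+\dots+\|\alpha_n\|)/2}$, resp.\ $C\|f\|_{\text{Lip}}(T-t)^{(1-(\|\alpha_1\|+\dots+\|\alpha_n\|))/2}$, giving \eqref{eq:PDEbdCts} and \eqref{eq:PDEbdLip} for smooth $f$. For $f$ merely bounded continuous or Lipschitz I would mollify: with $v_l:=P_{\cdot,T}f_l$, the representation above exhibits $\widetilde{W}_\alpha(t)v_l$ as a fixed finite combination of Kusuoka--Stroock integration-by-parts expressions whose kernels do not depend on $f_l$, so, exactly as in the existence part of the proof of Theorem~\ref{th:PDE}, $\widetilde{W}_\alpha(t)v_l$ converges locally uniformly as $l\to\infty$; this identifies the directional derivative of the limit $v$ and passes the estimates to the limit.

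\textbf{Main obstacle.} The delicate point is the bookkeeping in the commutation/PDE-substitution steps: one must verify carefully that the commutator of the genuine space-time operator $\widetilde{W}_0(t)=\partial_t+W_0(t,\cdot)\cdot\nabla_x$ with a frozen-$t$ operator $W_i(t)$ is exactly $\widetilde{W}_{[(0,i)]}(t)$, including the contribution of $\partial_t$ acting on $W_i(t,\cdot)$; that every bracket generated has vanishing first component and so acts only in $x$; and above all that the $\|\cdot\|$-weight is conserved at every single commutation and every PDE substitution, so that the final time-singularity order comes out exactly as $(\|\alpha_1\|+\dots+\|\alpha_n\|)/2$ and not worse.
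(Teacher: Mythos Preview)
Your proposal is correct and follows essentially the same route as the paper. The paper's proof simply quotes an identity (Corollary~78 in \cite{crisan2010cubature}) that expresses $\widetilde{W}_\alpha v$, for $v$ the PDE solution, as a finite $\|\cdot\|$-weight-preserving linear combination of products $\widetilde{W}_{[\beta_1]}\cdots\widetilde{W}_{[\beta_i]}v$ with $\beta_j\in\A_1$, and then invokes Theorem~\ref{th:IBPandGB}; your commutation-then-PDE-substitution argument is precisely how that identity is obtained, and your additional $\widetilde{UFG}$-reduction and mollification steps make explicit what the paper leaves implicit.
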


\begin{proof}
	Thinking of the $\widetilde{W}_0, W_1, \ldots, W_d$ as differential operators acting on functions in \newline
	$\C^{\infty}([0,T]\times \RR^N;\RR)$, Corollary 78 in \cite{crisan2010cubature} shows that 
	$\widetilde W_{\alpha}$, $\alpha \in \A$ satisfies  the following convenient identity 
	\begin{equation}\label{thebestindentity}
	\widetilde{W}_{\alpha}v=\sum_{i=1}^{\|\alpha\|}\sum_{\substack{\beta_{1}, \ldots,\beta_i\in \A_1,\\
			\|\beta_{1}\|+\ldots + \|\beta_{i}\| =\|\alpha\|}}
	c_{\alpha,\beta_{1}, \ldots,\beta_i} \widetilde{W}_{[\beta_{1}]}... \widetilde{W}_{[\beta_{i}]}v,
	\end{equation}
	where $c_{\alpha,\beta_{1}, \ldots,\beta_i}\in \mathbb{R}$. The importance of this identity is that the left hand side contains derivatives possibly in the direction $\widetilde{W}_0$ whereas on the right hand side, there are only derivatives in directions $\widetilde{W}_{[\alpha]}$, $\alpha \in \A_1$ which does not include $\widetilde{W}_0$.
	
	Hence,
	\[
	\left|\widetilde{W}_{\alpha} v \right|  \leq C \, \sup_{\substack{\beta_{1}, \ldots,\beta_i\in \A_1,\\
			\|\beta_{1}\|+\ldots + \|\beta_{i}\| =\|\alpha\|}}
	\left| \widetilde{W}_{[\beta_{1}]}... \widetilde{W}_{[\beta_{i}]}v \right|,
	\]
	this being exactly the type of term we can control by Theorem \ref{th:IBPandGB}. 
\end{proof}

\noindent Now, define, for $\varphi \in \C^{\infty}_b([0,T-1/R] \times \BB(0,R);\RR)$, the norm
\[
\| \varphi\|^{W,n}_{[0,T-1/R]\times\BB(0,R);\infty} : = \sum_{\alpha \in \A(n) } \| \widetilde{W}_{\alpha}  \varphi \|_{[0,T-1/R]\times\BB(0,R);\infty},
\]
and define $\widehat{\calD}^n([0,T-1/R] \times \BB(0,R))$ as the closure of $\C^{\infty}_b([0,T-1/R]\times\BB(0,R);\RR)$ in $\C_b([0,T-1/R]\times \bar{\BB}(0,R);\RR)$ with respect to this norm.
Then, set 
\[
\widehat{\calD}^{\infty}([0,T) \times \RR^N): = \bigcap_{R \geq 1, n\geq 1}	\widehat{\calD}^n([0,T-1/R] \times \BB(0,R)).
\]
\begin{lemma}
	\label{lem:niceset}
	The function $v(t,x) := P_{t,T}f(x)$ is a member of $ \widehat{\calD}^{\infty}([0,T) \times \RR^N)$ for all $f \in \C_p(\RR^N;\RR)$.
\end{lemma}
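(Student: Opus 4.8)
The plan is to realise $v$ as a limit, in each of the seminorms $\|\cdot\|^{W,n}_{[0,T-1/R]\times\BB(0,R);\infty}$, of the classical solutions associated with smooth, compactly supported terminal data. Fix a mollifier $\rho_l$ and a cut-off $\chi_l\in\C^{\infty}_c(\RR^N)$ with $\chi_l\equiv 1$ on $\BB(0,l)$, and put $f_l:=\chi_l\cdot(\rho_l\ast f)\in\C^{\infty}_c(\RR^N;\RR)$; then $f_l\to f$ uniformly on compacts and $\sup_l|f_l(x)|\le C(1+|x|)^q$ for the polynomial growth exponent $q$ of $f$. Set $v_l(t,x):=P_{t,T}f_l(x)$. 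Since $f_l\in\C^{\infty}_b$, standard parabolic theory --- bootstrapping the relation $\widetilde{W}_0 v_l=-\tfrac12\sum_{i=1}^d W_i^2 v_l$ against the smoothness of $v_l$ in $x$, exactly as in the proof of Theorem \ref{th:PDE} --- shows $v_l\in\C^{\infty}([0,T)\times\RR^N)$ is a classical solution of \eqref{eq:PDE}; in particular its restriction to the relatively compact box $[0,T-1/R]\times\BB(0,R)$ lies in $\C^{\infty}_b$. It therefore suffices to prove that, for each fixed $R\ge 1$ and $n\ge 1$, the sequence $(v_l)_l$ is Cauchy in $\|\cdot\|^{W,n}_{[0,T-1/R]\times\BB(0,R);\infty}$ and converges uniformly to $v$ on $[0,T-1/R]\times\bar{\BB}(0,R)$: this places $v$ in the closure $\widehat{\calD}^n([0,T-1/R]\times\BB(0,R))$ for all $R,n$, hence in $\widehat{\calD}^{\infty}([0,T)\times\RR^N)$.

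As $\|\cdot\|^{W,n}_{[0,T-1/R]\times\BB(0,R);\infty}$ is a finite sum over $\alpha\in\A(n)$ of sup-norms of $\widetilde{W}_{\alpha}$, it is enough to show $(\widetilde{W}_{\alpha}v_l)_l$ is uniformly Cauchy on the box for each $\alpha\in\A(n)$. Here I would reuse the reduction from the proof of Proposition \ref{prop:derivs}: by identity \eqref{thebestindentity}, $\widetilde{W}_{\alpha}v_l$ is a fixed linear combination of products $\widetilde{W}_{[\beta_1]}(t)\cdots\widetilde{W}_{[\beta_i]}(t)v_l$ with $\beta_1,\dots,\beta_i\in\A_1$; a further, routine use of the $\widetilde{UFG}(m)$ relations and the Leibniz rule --- which introduces only $\C^{\infty}_b$ multipliers and reduces the number of directional factors --- lets one assume $\beta_1,\dots,\beta_i\in\A_1(m)$. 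For such a product, the integration by parts formula \eqref{eq:bdedIBP} of Theorem \ref{th:IBPandGB}, applicable since $f_l\in\C^{\infty}_b$, gives
\[
\widetilde{W}_{[\beta_1]}(t)\cdots\widetilde{W}_{[\beta_i]}(t)v_l(t,x)=(T-t)^{-\kappa/2}\,\EE\!\left[f_l(X_T^{t,x})\,\Phi^1_{t,\beta_1,\dots,\beta_i}(T,x)\right],\qquad \kappa=\textstyle\sum_k\|\beta_k\|,
\]
with $\Phi^1_{t,\beta_1,\dots,\beta_i}\in\K_0(t,\RR)$ \emph{independent of $l$}.

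On $[0,T-1/R]\times\bar{\BB}(0,R)$ one has $(T-t)^{-\kappa/2}\le R^{\kappa/2}$, so the difference of two such terms (at indices $l,l'$) is bounded by $R^{\kappa/2}\,\EE[\,|f_l-f_{l'}|(X_T^{t,x})\,|\Phi^1_{t,\beta_1,\dots,\beta_i}(T,x)|\,]$. I would split this expectation over $\{|X_T^{t,x}|\le K\}$ and its complement. On the first event it is at most $(\sup_{|y|\le K}|f_l-f_{l'}|(y))\,\sup_{s,x}\EE|\Phi^1_{t,\beta_1,\dots,\beta_i}(s,x)|$, which tends to $0$ as $l,l'\to\infty$ by local uniform convergence of the $f_l$ and the $\K_0$-bound from Definition \ref{def:localKS}. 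On the second event, using the uniform polynomial bound on $|f_l-f_{l'}|$, Hölder's inequality, the estimate $\sup_{s,x}\|\Phi^1_{t,\beta_1,\dots,\beta_i}(s,x)\|_p<\infty$ for all $p$, and the uniform moment bounds $\sup_{t\in[0,T],\,x\in\bar{\BB}(0,R)}\EE|X_T^{t,x}|^p<\infty$, the contribution tends to $0$ as $K\to\infty$, uniformly in $l,l'$ and in $(t,x)$ on the box. Letting $K\to\infty$ after $l,l'\to\infty$ gives the desired uniform Cauchy property, and the special case $\Phi^1\equiv 1$ (the empty product) gives $v_l\to v$ uniformly on the box. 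This completes the plan.

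The main obstacle is precisely this last uniform-in-$l$ estimate of the tail term: one must combine the polynomial growth of $f$ with the $L^p(\Omega)$-bounds on the Kusuoka--Stroock weights (Definition \ref{def:localKS}, Lemma \ref{lem:KSprops}) and with moment bounds for the SDE, in such a way that the resulting bound depends on $R$ but not on $l$ or on $(t,x)$ in the box. A secondary, bookkeeping-type point is the reduction of the general multi-indices $\beta_k\in\A_1$ produced by \eqref{thebestindentity} to $\beta_k\in\A_1(m)$ using the $\widetilde{UFG}$ condition, which is what makes the explicit Malliavin weights of Theorem \ref{th:IBPandGB} available in the first place.
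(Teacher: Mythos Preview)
Your approach is essentially the same as the paper's: take smooth approximations $f_l$ of $f$, use the identity \eqref{thebestindentity} to rewrite $\widetilde{W}_{\alpha}v_l$ in terms of products $\widetilde{W}_{[\beta_1]}\cdots\widetilde{W}_{[\beta_i]}v_l$ with $\beta_k\in\A_1$, apply the integration by parts of Theorem~\ref{th:IBPandGB} to obtain a representation $\EE[f_l(X_T^{t,x})\Phi_{t,\alpha}(T,x)]$ with $\Phi_{t,\alpha}$ independent of $l$, and conclude convergence on compacts of $[0,T)\times\RR^N$. The paper's proof is very terse and simply asserts the last step; your treatment is more explicit, in particular the tail-splitting argument to handle the polynomial growth of $f$ and the reduction of the $\beta_k$ to $\A_1(m)$ via the $\widetilde{UFG}$ relations, both of which the paper leaves implicit.
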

\begin{proof}
	We take a sequence $(f_l)_{l \geq 1}$ of smooth approximations of $f$ and associate a $v_l$ to each. For any $n \in \NN$ and any $\alpha \in \A(n)$, we can use the identity \eqref{thebestindentity} to write $\widetilde{W}_{\alpha}(t) v_l(t,x)$ as a linear combination of terms of the form $\widetilde{W}_{[\beta]}(t)v_l(t,x)$ where $\beta \in \A_1(n)$. This allows us to apply the integration by parts formulae in Theorem \ref{th:IBPandGB} to write
	\[
	\widetilde{W}_{\alpha}(t) v_l(t,x) = t^{-\|\alpha\|/2} \, \EE\left[ f_l(X_T^{t,x}) \Phi_{t,\alpha}(T,x) \right]
	\]  
	for some $\Phi_{t,\alpha} \in \K_0(t,\RR)$. This converges over compact subsets of $[0,T) \times \RR^N$.
	%
\end{proof}

The above lemma is used in the next section where we need to perform a stochastic Taylor expansion of $v(t,x):=P_{t,T}f(x)$ for Lipschitz $f$.

\subsection{Stochastic Taylor expansion}
\label{sec:cuberror}

\begin{proposition}
	\label{prop:cuberror}
	Let $f$ be Lipschitz continuous and assume that $\widetilde{UFG}(m)$ holds for some $m \in \mathbb{N}$. Then, $u$, the solution of equation \eqref{eq:PDE} admits a stochastic Taylor expansion for $s<T$
	\begin{align*}
	u(s,X_s^{t,x}) =  \sum_{\alpha \in \A(l)} \widetilde{W}_{\alpha}u (t,x) \, I^{\alpha}_{t,s}(1) + R(l,t,s,x),
	\end{align*}
	with the following estimate on the remainder
	\begin{equation}
		\sup_{x \in \RR^N} \left\| R(l,s,t,x) \right\|_{2} \leq C \sum_{k=l+1}^{l+2} \, (T-s)^{-(k-1)/2}(s-t)^{-k/2}.
	\end{equation}
	This leads to a one-step cubature error estimate of
		\begin{equation}
		\label{eq:onestepcubappendix}
		\sup_{x \in \RR^N} \left| \EE \left[ u(s,X_s^{t,x}) \right] - \EE_{\QQ_{t,s}} \left[ u(s,X_s^{t,x}) \right] \right| \leq C \sum_{k=l+1}^{l+2} \, (T-s)^{-(k-1)/2}(s-t)^{-k/2}.
		\end{equation}
\end{proposition}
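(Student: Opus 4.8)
The plan is to derive the expansion by iterating It\^o's formula in Stratonovich form and to control the remainder with the directional bounds of Proposition \ref{prop:derivs}; the one-step cubature error is then immediate from the defining property of a degree-$l$ cubature formula. First I would rewrite the It\^o formula of Lemma \ref{lem:Itoformula} in Stratonovich form: since $u(t,\cdot)=P_{t,T}f$ is differentiable in the directions $\widetilde{W}_0(t),W_1(t),\dots,W_d(t)$ and, by Lemma \ref{lem:niceset}, in all iterated directions, the conversion is legitimate and, with the convention $B^0_r=r$ (so $\widetilde W_0$ contains $\partial_t$), gives
\[
u(s,X_s^{t,x}) = u(t,x) + \sum_{i=0}^d \int_t^s \widetilde W_i(r)\, u(r,X_r^{t,x}) \circ dB^i_r , \qquad s<T .
\]
Iterating this identity, and stopping a branch as soon as its multi-index $\alpha$ acquires $\|\alpha\|>l$, produces
\[
u(s,X_s^{t,x}) = \sum_{\alpha\in\A(l)} \widetilde W_\alpha u(t,x)\, I^\alpha_{t,s}(1) \;+\; \underbrace{\sum_{\alpha} I^\alpha_{t,s}\!\big(\widetilde W_\alpha u(\cdot,X_\cdot^{t,x})\big)}_{=:R(l,t,s,x)},
\]
the remainder sum running over the finitely many minimal multi-indices $\alpha=(i)\ast\gamma$ with $\gamma\in\A(l)$; since prepending the letter $i$ raises $\|\cdot\|$ by $1$ if $i\neq0$ and by $2$ if $i=0$, each satisfies $\|\alpha\|\in\{l+1,l+2\}$. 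For full rigour one mollifies $f$, runs this expansion for the smooth solutions $v_l$, and passes to the limit using the uniform-on-compacts convergence of $\widetilde W_\alpha v_l$ from Lemma \ref{lem:niceset}.

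Next I estimate each remainder term. For $r\in[t,s]$, Proposition \ref{prop:derivs} with the Lipschitz bound gives
\[
\sup_{x\in\RR^N}\big|\widetilde W_\alpha(r)\, u(r,x)\big| \le C\,\|f\|_{\text{Lip}}\,(T-r)^{(1-\|\alpha\|)/2} \le C\,(T-s)^{(1-\|\alpha\|)/2},
\]
using $r\le s$ and $\|\alpha\|\ge2$ so the exponent is negative; in particular the integrand is bounded by this deterministic quantity. The standard $L^2$ bound for an iterated Stratonovich integral of a bounded adapted process over $[t,s]$ — convert to It\^o (the correction terms preserve the $\|\cdot\|$-weight) and apply Burkholder--Davis--Gundy — gives $\|I^\alpha_{t,s}(Y)\|_2\le C\,\|Y\|_\infty\,(s-t)^{\|\alpha\|/2}$, so
\[
\big\|I^\alpha_{t,s}\big(\widetilde W_\alpha u(\cdot,X_\cdot^{t,x})\big)\big\|_2 \le C\,(T-s)^{(1-\|\alpha\|)/2}(s-t)^{\|\alpha\|/2},
\]
and summing over the finitely many $\alpha$ with $\|\alpha\|=k\in\{l+1,l+2\}$ gives the bound on $\sup_x\|R(l,s,t,x)\|_2$ (the power of $(s-t)$ being $k/2$, consistently with \eqref{eq:liphCubError}).

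For the one-step cubature error I take expectations of the expansion under $\PP$ and under $\QQ_{t,s}$, reading $X^{t,x}$ under $\QQ_{t,s}$ as the flow driven by the bounded-variation cubature paths. Along each path $\omega_j(t,s)$ the same expansion holds path-by-path — the directional differentiability of $u$ supplies the chain rule along the driving ODE, again via the mollified $v_l$ — with a remainder $\widetilde R(l,t,s,x)$ estimated as above, this time using $I^\alpha_{t,s}[\omega_j(t,s)](1)=O\big((s-t)^{\|\alpha\|/2}\big)$, which follows from $\omega_j(t,s)(u)=\sqrt{s-t}\,\omega_j(u/(s-t))$, together with the (deterministic, uniform-in-$x$) derivative bound of Proposition \ref{prop:derivs}. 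Since the degree-$l$ cubature formula gives $\EE[I^\alpha_{t,s}(1)]=\sum_j\lambda_j I^\alpha_{t,s}[\omega_j(t,s)](1)$ for every $\alpha\in\A(l)$, the two leading sums cancel and
\[
\Big|\EE\big[u(s,X_s^{t,x})\big]-\EE_{\QQ_{t,s}}\big[u(s,X_s^{t,x})\big]\Big| \le \big\|R(l,t,s,x)\big\|_1 + \EE_{\QQ_{t,s}}\big|\widetilde R(l,t,s,x)\big| \le C\sum_{k=l+1}^{l+2}(T-s)^{-(k-1)/2}(s-t)^{k/2}.
\]

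The main obstacle is justifying the iterated expansion (and its deterministic analogue along the cubature paths): $u$ is not classically $C^{1,2}$, so one cannot naively iterate It\^o's formula. This is exactly what the appendix machinery is for — Lemma \ref{lem:Itoformula} for a single step, Lemma \ref{lem:niceset} ($u\in\widehat{\calD}^\infty$) to iterate, and Proposition \ref{prop:derivs} for the quantitative bounds — together with a careful mollification-and-limit argument. Given that, the bookkeeping of which multi-indices land in the remainder, and the interplay of the near-terminal blow-up $(T-s)^{(1-\|\alpha\|)/2}$ with the smallness $(s-t)^{\|\alpha\|/2}$, is routine.
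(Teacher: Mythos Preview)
Your proposal is correct and follows essentially the same approach as the paper: iterate the It\^o formula (justified via Lemmas \ref{lem:Itoformula} and \ref{lem:niceset}) to obtain the Stratonovich--Taylor expansion with remainder indexed by $\alpha$ with $\|\alpha\|\in\{l+1,l+2\}$, bound the integrands using the directional estimate \eqref{eq:PDEbdLip} from Proposition \ref{prop:derivs}, apply the standard $(s-t)^{\|\alpha\|/2}$ moment estimate for iterated Stratonovich integrals (and its deterministic analogue for the rescaled cubature paths), and subtract the two expansions using the degree-$l$ cubature property. You also correctly obtain the positive exponent $(s-t)^{k/2}$, matching both the paper's proof and \eqref{eq:liphCubError}; the $(s-t)^{-k/2}$ in the proposition statement is a typo.
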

\begin{proof}
	For any $g \in \C^{\infty}_b([t,s] \times \RR^N;\RR)$, the following Stratonovich-Taylor expansion is contained in, for example, Kloeden \& Platen \cite[Theorem 5.6.1]{kloedenplaten} 
	\begin{align*}
	g(s,X_s^{t,x}) =  \sum_{\alpha \in \A(l)} \widetilde{W}_{\alpha}g (t,x) \, I^{\alpha}_{t,s}(1) + R(l,t,s,x)
	\end{align*}
	where 
	\begin{equation*}
	R(l,t,s,x,g) = \sum_{-\beta \in \A(l) , \beta \notin \A(l)} I^{\beta}_{t,s} \left(\widetilde{W}_{\beta}g(\cdot,X_{\cdot}^{t,x})\right).
	\end{equation*}
	It is not immediate that this expansion is valid for $g=u$, the solution of equation \eqref{eq:PDE} since it is not differentiable in all directions. However, the Stratonovich-Taylor expansion follows from repeated application of the It\^{o} formula contained in Lemma \ref{lem:Itoformula}. We recall Lemma \ref{lem:niceset}, which says that $(t,x)\mapsto P_{t,T}f(x) \in \widehat{\calD}^{\infty}([0,T) \times \RR^N)$. This guarantees we can apply It\^{o}'s formula as many times as we wish and so the Stratonovich-Taylor expansion is still valid.
	We then have the following estimate for $g=u$
	\begin{align}
	\nonumber	\sup_{x \in \RR^N} \left\| R(l,s,t,x) \right\|_{2} & \leq  \,  \sum_{-\beta \in \A(l) , \beta \notin \A(l)} \left \| I^{\beta}_{t,s}\left(\widetilde{W}_{\beta}u(\cdot,X_{\cdot}^{t,x}) \right) \right\|_{2}  \\ 
	\nonumber	& \leq \sum_{j=l+1}^{l+2} \; \sup_{\beta \in \A(j) } \sup_{(p,x) \in [t,s] \times \RR^N} \left| \widetilde{W}_{\beta}u (p,x) \right| \,  \left\| I^{\beta}_{t,s}(1) \right\|_{2} \\
	\label{eq:Pest}	& \leq C \sum_{j=l+1}^{l+2} \; \sup_{\beta \in \A(j) } \sup_{(p,x) \in [t,s] \times \RR^N} \left| \widetilde{W}_{\beta}u (p,x) \right|  (s-t)^{j/2},
	\end{align}
	where we have used the standard moment estimate on iterated Stratonovich integrals $\left\| I^{\beta}_{t,s}(1) \right\|_{2} \leq C (s-t)^{\| \beta \|/2} $.
	A similar estimate holds under the one step cubature measure, $\QQ_{t,s}$ :
	\begin{equation}
	\label{eq:Qest}
	\sup_{x \in \RR^N} \left| \EE_{\QQ_{t,s}} R(l,t,s,x)  \right| \leq C \sum_{j=l+1}^{l+2} \; \sup_{\beta \in \A(j) } \sup_{(p,x) \in [t,s] \times \RR^N} \left| \widetilde{W}_{\beta}u (p,x) \right| (s-t)^{j/2}.
	\end{equation}
	This is a standard estimate on iterated integrals of bounded variation paths. The constant $C$ depends on $d$, $l$ and the length of the cubature paths. Inequalities \eqref{eq:Pest} and \eqref{eq:Qest} give us control over the error in approximating $P_{t,s}g(t,\cdot)$ by $Q_{t,s}g(t, \cdot)$, 
	\begin{align}
	\nonumber	\sup_{x \in \RR^N} \left| \EE \left[ u(s,X_s^{t,x}) \right] - \EE_{\QQ_{t,s}} \left[ u(s,X_s^{t,x}) \right] \right|  &= \sup_{x \in \RR^N} \left| (\EE - \EE_{\QQ_{t,s}}) R(l,t,s,x) \right| \\
	\label{eq:onesteperror}			& \leq C \, \sum_{j=l+1}^{l+2} \; \sup_{\beta \in \A(j) } \sup_{(p,x) \in [t,s] \times \RR^N} \left| \widetilde{W}_{\beta}u (p,x) \right| (s-t)^{j/2}.
	\end{align}
	 To  bound
	\[
	\sup_{(p,x) \in [t_j,t_{j+1}] \times \RR^N} \left| \widetilde{W}_{\beta}u (p,x) \right|,
	\]
we use the estimate provided in \eqref{eq:PDEbdLip} and taking the supremum over $p \in [t_j, t_{j+1}]$, we get
	\[
	\sup_{(p,x) \in [t_j, t_{j+1}] \times \RR^N}\sup_{\beta \in \A(j) }  \left| \widetilde{W}_{\beta}u (p,x) \right| \leq C \; (T-t_{j+1})^{(1-j)/2}.
	\]
	
\end{proof}


\bibliography{mybiblio}

\begin{thebibliography}{10}

\bibitem{antonellikohatsu}
F.~Antonelli and A.~Kohatsu-Higa.
\newblock Rate of convergence of a particle method to the solution of the
  {M}c{K}ean--{V}lasov equation.
\newblock {\em The Annals of Applied Probability}, 12(2):423--476, 2002.

\bibitem{frizbayer}
C.~Bayer and P.~K. Friz.
\newblock Cubature on {W}iener space: pathwise convergence.
\newblock {\em Appl. Math. Optim.}, 67(2):261--278, 2013.

\bibitem{bossy2005some}
M.~Bossy.
\newblock Some stochastic particle methods for nonlinear parabolic {PDE}s.
\newblock In {\em ESAIM: proceedings}, volume~15, pages 18--57. EDP Sciences,
  2005.

\bibitem{bossy97astochastic}
M.~Bossy and D.~Talay.
\newblock A stochastic particle method for the {M}c{K}ean-{V}lasov and the
  {B}urgers equation.
\newblock {\em Math. Comp.}, 66(217):157--192, 1997.

\bibitem{crisandelarue}
D.~Crisan and F.~Delarue.
\newblock Sharp derivative bounds for solutions of degenerate semi-linear
  partial differential equations.
\newblock {\em Journal of Functional Analysis}, 263(10):3024 -- 3101, 2012.

\bibitem{crisanghazali}
D.~Crisan and S.~Ghazali.
\newblock On the convergence rates of a general class of weak approximations of
  {SDE}s.
\newblock In {\em Stochastic differential equations: theory and applications},
  volume~2 of {\em Interdiscip. Math. Sci.}, pages 221--248. World Sci. Publ.,
  Hackensack, NJ, 2007.

\bibitem{tbba}
D.~Crisan and T.~Lyons.
\newblock Minimal entropy approximations and optimal algorithms.
\newblock {\em Monte Carlo Methods Appl.}, 8(4):343--355, 2002.

\bibitem{crisanmansolving}
D.~Crisan and K.~Manolarakis.
\newblock Solving backward stochastic differential equations using the cubature
  method: application to nonlinear pricing.
\newblock {\em SIAM J. Financial Math.}, 3(1):534--571, 2012.

\bibitem{crisanmansecond}
D.~Crisan and K.~Manolarakis.
\newblock Second order discretization of backward {SDE}s and simulation with
  the cubature method.
\newblock {\em Ann. Appl. Probab.}, 24(2):652--678, 2014.

\bibitem{crisan2010cubature}
D.~Crisan, K.~Manolarakis, and C.~Nee.
\newblock {\em Cubature Methods and Applications}, volume Lecture Notes in
  Mathematics, Vol. 2081.
\newblock Paris-Princeton Lectures on Mathematical Finance 2013, 2013.

\bibitem{crisan2013kusuoka}
D.~Crisan and S.~Ortiz-Latorre.
\newblock A kusuoka--lyons--victoir particle filter.
\newblock In {\em Proceedings of the Royal Society of London A: Mathematical,
  Physical and Engineering Sciences}, volume 469, page 20130076. The Royal
  Society, 2013.

\bibitem{nicecubature}
P.~C. de~Raynal and C.~G. Trillos.
\newblock A cubature based algorithm to solve decoupled mckean--vlasov
  forward--backward stochastic differential equations.
\newblock {\em Stochastic Processes and their Applications}, 125(6):2206--2255,
  2015.

\bibitem{gyurkolyons}
L.~G. Gyurk{\'o} and T.~J. Lyons.
\newblock Efficient and practical implementations of cubature on {W}iener
  space.
\newblock In {\em Stochastic analysis 2010}. Springer.

\bibitem{Jourdain_nonlinearsdes}
B.~Jourdain, S.~M{\'e}l{\'e}ard, and W.~A. Woyczynski.
\newblock Nonlinear {SDE}s driven by {L\'e}vy processes and related {PDE}s.
\newblock {\em ALEA, Latin American Journal of Probability}, 4:1--29, 2008.

\bibitem{kloedenplaten}
P.~E. Kloeden and E.~Platen.
\newblock {\em Numerical solution of stochastic differential equations},
  volume~23 of {\em Applications of Mathematics (New York)}.
\newblock Springer-Verlag, Berlin, 1992.

\bibitem{kohatsuogawa}
A.~Kohatsu-Higa and S.~Ogawa.
\newblock Weak rate of convergence for an {E}uler scheme of nonlinear {SDE}'s.
\newblock {\em Monte Carlo Methods and Applications}, 3:327--345, 1997.

\bibitem{Kun84}
H.~Kunita.
\newblock Stochastic differential equations and stochastic flows of
  diffeomorphisms.
\newblock {\em Lecture notes in Mathematics}.

\bibitem{Kusuoka2}
S.~Kusuoka.
\newblock Approximation of expectation of diffusion process and mathematical
  finance.
\newblock In {\em Taniguchi {C}onference on {M}athematics {N}ara '98},
  volume~31 of {\em Adv. Stud. Pure Math.}, pages 147--165. Math. Soc. Japan,
  Tokyo, 2001.

\bibitem{Kusuoka3}
S.~Kusuoka.
\newblock Malliavin calculus revisited.
\newblock {\em J. Math. Sci. Univ. Tokyo}, 10(2):261--277, 2003.

\bibitem{KusStrII}
S.~Kusuoka and D.~Stroock.
\newblock Applications of the {M}alliavin calculus. {II}.
\newblock {\em J. Fac. Sci. Univ. Tokyo Sect. IA Math.}, 32(1):1--76, 1985.

\bibitem{KusStrIII}
S.~Kusuoka and D.~Stroock.
\newblock Applications of the {M}alliavin calculus. {III}.
\newblock {\em J. Fac. Sci. Univ. Tokyo Sect. IA Math.}, 34(2):391--442, 1987.

\bibitem{lee2015adaptive}
W.~Lee and T.~Lyons.
\newblock The adaptive patched cubature filter and its implementation.
\newblock {\em arXiv preprint arXiv:1509.04239}, 2015.

\bibitem{littlyons}
C.~Litterer and T.~Lyons.
\newblock Introducing cubature to filtering.
\newblock In {\em The {O}xford handbook of nonlinear filtering}, pages
  768--796. Oxford Univ. Press, Oxford, 2011.

\bibitem{lyons2004cubature}
T.~Lyons and N.~Victoir.
\newblock Cubature on wiener space.
\newblock {\em Proceedings of the Royal Society of London. Series A:
  Mathematical,Physical and Engineering Sciences}, 460(2041):169--198, 2004.

\bibitem{ninomyiavictoir}
S.~Ninomyia and N.~Victoir.
\newblock Weak approximation scheme of stochastic differential equations and
  applications to derivatives pricing.
\newblock {\em Applied Mathematical {F}inance}, 2008.

\bibitem{ogawasome}
S.~Ogawa.
\newblock Some problems in the simulation of nonlinear diffusion processes.
\newblock {\em Math. Comput. Simulation}, 38(1-3):217--223, 1995.
\newblock Probabilit{\'e}s num{\'e}riques (Paris, 1992).

\bibitem{MKVMLMLC}
L.~F. {Ricketson}.
\newblock {A multilevel Monte Carlo method for a class of McKean-Vlasov
  processes}.
\newblock {\em ArXiv e-prints}, Aug. 2015.

\bibitem{tachet}
R.~Tachet Des~Combes.
\newblock {\em {Calibration non param{\'e}triques de mod{\`e}les en finance.}}
\newblock Theses, {Ecole Centrale Paris}, Oct. 2011.

\bibitem{teichmann2006calculating}
J.~Teichmann.
\newblock Calculating the greeks by cubature formulae.
\newblock {\em Proceedings of the Royal Society A: Mathematical, Physical and
  EngineeringScience}, 462(2066):647--670, 2006.

\end{thebibliography}

\end{document}